\let\oldmarginpar\marginpar
\renewcommand\marginpar[1]{\-\oldmarginpar[\raggedleft\footnotesize #1]%
{\raggedright\footnotesize #1}}
\newtheorem{theorem}{Theorem}[section]
\newtheorem*{theorem*}{Theorem}
\newtheorem{lemma}[theorem]{Lemma}
\newtheorem{proposition}[theorem]{Proposition}
\newtheorem{corollary}[theorem]{Corollary}
\theoremstyle{definition}
\newtheorem{definition}[theorem]{Definition}
\newtheorem{example}[theorem]{Example}
\newtheorem{remark}[theorem]{Remark}
\newtheorem{thm}{Proposition}
\newenvironment{thmbis}[1]
  {%
   \addtocounter{thm}{-1}%
   \begin{thm}}
  {\end{thm}}
\newtheorem{thmm}{Theorem}
\newenvironment{thmmbis}[1]
  {%
   \addtocounter{thm}{-1}%
   \begin{thmm}}
  {\end{thmm}}
\newcommand{\Z}{\mathbb{Z}}
\newcommand{\N}{\mathbb{d}}
\newcommand{\R}{\mathbb{R}}
\newcommand{\T}{\mathbb{T}}
\def\L{\Lambda}
\def\l{\lambda}
\def\T{\mathbb{T}}
\def\N{\mathbb{N}}
\def\Z{\mathbb{Z}}
\def\R{\mathbb{R}}
\def\R{\mathbb{R}}
\def\F{\mathcal{F}}
\def\1{\mathbf{1}}
\title[ Riesz basis bound]{On the lower Riesz basis bound of exponential systems over an interval}
\author{Thibaud Alemany}
\author{Shahaf Nitzan$^1$}
\thanks{\textcolor{white}{l}$^1$ The second author is supported by NSF CAREER grant DMS 1847796. This grant provided some support also to the first author}
\begin{document}

\begin{abstract}
We revisit Pavlov's characterization for Riesz bases of exponentials and study the corresponding lower Riesz basis bounds. In particular, this approach allows us to improve on known estimates for the bounds in Avdonin's theorem regarding average perturbations, and Levin's theorem regarding zeroes of sine-type functions.
\end{abstract}

\maketitle
\section{Introduction}

Let $\L\subseteq\R$ be a \textit{uniformly discrete} sequence, that is
\begin{equation}\label{seperated}
|\l-\l'|\geq \delta>0\qquad \forall\l\neq \l'\in\L,
\end{equation}
and consider the corresponding exponential system
\[
E(\L):=\{e^{2\pi i\l t}\}_{\l\in\L}.
\]
We say that this system is a \textit{Riesz basis} in $L^2[0,1]$ if there exist constants $A,B>0$ so that every function in the space admits a unique decomposition
\[
f=\sum_{\l\in\L} a_{\l}e^{2\pi i\l t}
\]
with
\[
A\sum_{\l\in\L}|a_{\l}|^2\leq \|f\|_{L^2[0,1]}^2 \leq B\sum_{\l\in\L}|a_{\l}|^2.
\]
The largest $A$ and smallest $B$ for which these inequalities hold are called the \textit{lower and upper Riesz basis bounds} respectively. We denote them by $A(\L)$ and $B(\L)$.

If $A(\L)=B(\L)=1$ then $E(\L)$ is an orthonormal basis in $L^2[0,1]$. It is straightforward to check that this happens if and only if $\L=\Z+\alpha$ for some $\alpha\in\R$. In the general case, the closer $A(\L)$ and $B(\L)$ are to $1$, the better are the properties of the basis: It provides series decompositions which converge at a faster rate and it is more stable to small perturbations. Estimating the Riesz basis bounds is therefore of interest. Estimates for $B(\L)$ are well known and not difficult to obtain (see e.g. (\ref{exp bessel}) below). One of the main goals of this paper is to provide corresponding estimates for the lower Riesz basis bound, $A(\L)$.

When considered over an interval, Riesz bases of exponentials are a classical topic of research. Their study goes back to Paley and Wiener who proved that if $|\l_n-n|\leq 1/\pi^2$, then $E(\L)=\{\l_n\}_{n\in\Z}$ is a Riesz basis in $L^2[0,1]$, \cite{PW, Wiener}. The sharp condition $|\l_n-n|\leq\mu<1/4$ was found by Kadetz, \cite{Kadetz} (See also \cite{Youngbook}). Kadetz' proof implies that in this case the corresponding lower Riesz basis bound satisfies
\[
A(\L)\geq 2\sin ^2{\frac{\pi}{4}(1-4 \mu)}.
\]

 A surprising extension of Kadetz' theorem was obtained by Avdonin, who showed that it is enough to consider stability under averaged perturbations, \cite{Avdonin}. More precisely, Avdonin's theorem states that if (\ref{seperated}) holds,
\[
L:=\sup_{n\in\Z}|\l_n-n|<\infty,
\]
and for some $N\in \N$ we have
\[
\mu^*(N):=\sup_{m\in\Z}\Big|\frac{1}{N}\sum_{n=mN}^{(m+1)N-1}(\l_n-n)\Big|<\frac{1}{4},
\]
then $E(\L)$ is a Riesz basis in $L^2[0,1]$.

In contrast to Kadetz' theorem, the lower Riesz basis bound in Avdonin's result does not directly follow from his proof. In Theorem \ref{thm:avdonin} we show that under the conditions of Avdonin's theorem the lower Riesz basis bound satisfies
\begin{equation}\label{intro avdonin}
A(\L)\geq \frac{1}{7\delta(\Lambda)}e^{- \frac{960\pi L^2N}{\delta(\Lambda)(1-4\mu^*)^2}}\sin^2{\frac{\pi}{4}(1-4\mu^*)},
\end{equation}
where $\mu^*=\mu^*(N)$. This improves on a previous estimate of Lindner \cite{Lindner}, by reducing the order of decay from a double-exponential to an exponential one. (See section \ref{kadetz lindner avdonin}).

Riesz bases of exponentials were identified not only via perturbation results. In a different direction of study, they were identified as zero sets of certain analytic functions. A well known family of such functions is the family of \textit{sine-type functions}. An analytic function $F$ belongs to this class if it is \textit{of exponential type} and there exist $y, m, M>0$ so that
\begin{equation}\label{bounded from above and below I}
m\leq |F(x+iy)|^2 \leq M \qquad  x \in\R.
\end{equation}
In  \cite{Levin, Golovin} Levin and Golovin prove that the zero set of a sine-type function  of \textit{full diagram} gives a Riesz basis in $L^2[0,1]$ (See Section \ref{sine type} for a more detailed discussion of these conditions). In Corollary \ref{sine type}
we find that for $\L\subseteq \R$, under these conditions, the lower Riesz basis bound satisfies
\begin{equation}\label{intro sine type}
A(\Lambda)\geq \frac{1}{7\delta(\L)}\frac{m}{M}e^{-\frac{8\pi y}{\delta(\L)}},
\end{equation}
where $y,m,M>0$ are as in (\ref{bounded from above and below I}).

These research directions culminated with a celebrated theorem of Pavlov, which gives a full characterization of exponential Riesz bases over an interval \cite{Pav}. Roughly speaking, Pavlov proved that if the condition in (\ref{bounded from above and below I}) is replaced with the weaker condition, that the weight  $w^{(y)}(x):=  |F(x+iy)| ^2$ satisfies
\[
\sup_{I=[a,b]\subset\R}\frac{1}{|I|^2}\int_Iw^{(y)}\int_I\frac{1}{w^{(y)}}<\infty,
\]
 then the sufficient condition in the Levin-Golovin theorem becomes both sufficient and necessary. Following this, in  \cite{Hrusch},  Hrusch\"{e}v showed that the perturbation theorems formulated above can be obtained from Pavlov's characterization. A comprehensive survey, including several additional results, was given by Pavlov, Hrusch\"{e}v, and Nikolskii \cite{HNP}.

Here we return to Pavlov and Hrusch\"{e}v's approach with two goals in mind: To give what we believe is a relatively straightforward presentation of it, and to do so while studying how different elements in it may impact the lower Riesz basis bound. Pavlov's main idea builds on the observation that the Riesz basis property is equivalent to the invertibility of  a restricted projection operator, denoted below by $P_{K_S}|K_{B}$. He then shows that this operator is invertible if and only if the Riesz projection $R$ is bounded in a certain weighted space $L^2_w(\R)$. We present a different proof for the latter equivalence. In particular it directly gives the relation
\[
\|(P_{K_S}|K_{B})^{-1}\|=\|R\|_{L^2_w(\R)},
\]
(See Theorem \ref{rb as riesz} below). This allows us to relate the norm of the Riesz projection and the corresponding lower Riesz basis bound.

In \cite{HNP}, Pavlov, Hrusch\"{e}v, and Nikolskii give some additional versions of Pavlov's characterization, formulated in terms of certain phase functions and in terms of the counting function of $\L$. We consider these results as well, and give corresponding estimates for the lower Riesz basis bounds. With this we obtain the estimates (\ref{intro avdonin}) and  (\ref{intro sine type}) formulated above. Finally, we consider also applications to Marcinkiewicz-Zygmund families. In particular, we show that a similar bound holds for Avdonin's theorem also in this setting.

\begin{remark}
To simplify the presentation, we restrict our attention to real sequences $\L$. The estimates we give for $A(\L)$ can be extended without much change to the case where $\L$ lies in a strip parallel to the real axis. Pavlov's original proof considered only this case. The case where $\L$  lies in a half plain was treated by Nikolskii in \cite{Nikbook} (see also \cite{HNP}), and the general case was discussed by Minkin in \cite{Min}. To estimate the lower Riesz basis bound in these cases one needs first to consider the bounds in the so called \textit{Carleson condition}, we do not enter this discussion here.
\end{remark}

\begin{remark}
In \cite{SeipLiubAp} Lyubarskii and Seip take a different approach to the problem, which allows them to extend the result also to $p$-normed spaces. An estimate of the lower Riesz basis bound may be obtained using this approach as well, but the direct relation formulated in  Theorem \ref{rb as riesz} does not follow from it. Some discussion of Lyubarskii and Seip's presentation of the result appears in Section \ref{charact-generat}.
\end{remark}

The presentation below is organized as follows. In Section 2 we describe the equivalence between the Riesz basis problem, the invertibility of the operators $P_{K_S}|K_{B}$, and the boundedness of the Riesz projection in $L^2_w(\R)$. We add a short survey of known results related to the latter question. We apply this machinery in Section 3 where we discuss characterizations of Riesz bases and estimate the corresponding lower Riesz basis bounds. In Section 4 we look at some applications. In particular, we give bound estimates in Avdonin's theorem and the Levin-Golovin theorem. We consider Marcinkiewicz-Zygmund families at the end of Section 4.
\section{Riesz bases and the Riesz projection}

In this section we recall the basic properties of Riesz bases. Next, we follow the approach from \cite{Pav} and observe a relation between exponential Riesz bases and the Riesz projection in appropriate weighted spaces. Finally we shortly survey results related to norm estimates for the latter operator.

\subsection{Riesz bases in the abstract setting}\label{section abstract setting}

Throughout this subsection let $H$ be a separable Hilbert space and  let $I$ be a countable index set. We denote by $\{e_n\}_{n\in I}$ the standard orthonormal basis in $\ell^2(I)$, that is, for $k,n\in I$ we have $e_n(k)=1$ if $k=n$ and $e_n(k)=0$ otherwise. We are interested in the following.

\begin{definition}\label{def rb}
Let $\Phi:=\{\phi_n\}_{n\in I}\subset H$. We say that $\Phi$ is a Riesz basis in $H$ if there exists a bounded invertible operator $T:\ell^2(I)\rightarrow H$ which satisfies $Te_n=\phi_n$ for every $n\in I$.
\end{definition}
The condition in Definition \ref{def rb} holds if and only if $\Phi$ is complete in $H$ and
\begin{equation}\label{rb def 2}
A\sum_{n\in I}|a_n|^2\leq \|\sum_{n\in I} a_n\phi_n\|^2\leq B\sum_{n\in I}|a_n|^2,
\end{equation}
for any finite sequence $\{a_n\}\in \ell^2(I)$. The largest $A$ and smallest $B$ for which this inequality holds are called the \textit{lower and upper Riesz basis bounds}. We denote them by $A(\Phi)$ and $B(\Phi)$. A system $\Phi$ which satisfies the inequality in (\ref{rb def 2}) but is not necessarily complete is called a \textit{Riesz sequence}.

It is strait forward to check that Definition \ref{def rb} coincides with the presentation of Riesz bases as was given in the introduction, that is, that every $f\in H$ admits a unique decomposition
\[
f=\sum_{n\in I} a_n\phi_n
\]
with
\[
A\sum_{n\in I}|a_n|^2\leq \|f\|^2\leq B\sum_{n\in I}|a_n|^2.
\]
Note that if $T$ is the operator from Definition \ref{def rb} then $A(\Phi)=\|T^{-1}\|^{-2}$ and $B(\Phi)=\|T\|^2$.

By considering the adjoint of $T$, one may check that $\Phi$ is a Riesz basis in $H$ if and only if it is a minimal system and
\begin{equation}\label{frame def 2}
 A\|f\|^2\leq \sum|\langle f,\phi_n\rangle|^2\leq B\|f\|^2\qquad\forall f\in H,
\end{equation}
where $B=\|T^*\|^2=\|T\|^2=B(\Phi)$ and $A=\|(T^*)^{-1}\|^{-2}=\|T^{-1}\|^{-2}=A(\Phi)$.
A system $\Phi$ which satisfies the inequality in (\ref{frame def 2}) but is not necessarily minimal is called a \textit{frame}. In particular, it follows that the right hand inequalities in  (\ref{rb def 2}) and (\ref{frame def 2}) are equivalent. A system which satisfies these inequalities, but not necessarily the left hand side inequalities, is called a \textit{Bessel system}.

Assume that $\Phi$ is a Riesz bases in $H$. Since Riesz bases are both complete and minimal, $\Phi$ admits a unique dual system $\{g_n\}\subseteq H$ so that $\langle\phi_n,g_k\rangle =1$ if $k=n$ and $\langle\phi_n,g_k\rangle =0$ otherwise.
We therefore have
\begin{equation}\label{dual riesz}
f=\sum_{n\in I}\langle f,g_n\rangle \phi_n \qquad\forall f\in H.
\end{equation}

The following lemma is an abstract version of a lemma which can be found in \cite{HNP} (see pp 218-219 for a short survey of works where it appears). This lemma gives a necessary and sufficient condition for the Riesz basis property to be preserved under projections.

\begin{lemma}\label{lemma: rb and projection}
Let $\Phi:=\{\phi_n\}_{n\in I}\subseteq H$. Assume that $\Phi$ is a Riesz basis in $S:=\overline{\textrm{span}}\:(\Phi)$. Given a closed subspace $L\subseteq H$ let $P_L:H\rightarrow L$ denote the orthogonal projection of $H$ onto $L$. Then the following are equivalent:
\begin{itemize}
\item[i.] The system $P_L\Phi:=\{P_L\phi:\:\phi\in \Phi\}$ is a Riesz basis in $L$.

\vspace{5pt}

\item[ii.] The restriction $P_L|S:S\rightarrow L$ is an isomorphism of $S$ onto $L$.
\end{itemize}
Moreover, in this case we have
\begin{equation}\label{rb bound relations}
A(\Phi)\|(P_L|S)^{-1}\|^{-2}\leq A(P_L\Phi)\quad\textrm{and}\quad B(P_L\Phi)\leq B(\Phi)\|(P_L| S)\|^2.
\end{equation}
\end{lemma}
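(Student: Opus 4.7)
The plan is to reduce everything to manipulations of the natural Riesz basis operator $T\colon\ell^2(I)\to S$ defined by $Te_n=\phi_n$, which is an isomorphism by Definition~\ref{def rb} with $A(\Phi)=\|T^{-1}\|^{-2}$ and $B(\Phi)=\|T\|^2$. I would then express the candidate Riesz basis operator for $P_L\Phi$ as the composition $(P_L|S)\circ T\colon \ell^2(I)\to L$, which sends $e_n$ to $P_L\phi_n$. The entire argument, including both the qualitative equivalence and the quantitative bounds, will follow from tracking how invertibility and operator norms behave under this composition.

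For (ii)$\Rightarrow$(i), I would observe that if $P_L|S$ is an isomorphism of $S$ onto $L$, then $(P_L|S)\circ T$ is a composition of two isomorphisms, hence itself an isomorphism $\ell^2(I)\to L$ sending $e_n\mapsto P_L\phi_n$; by Definition~\ref{def rb} this exhibits $P_L\Phi$ as a Riesz basis in $L$. For the converse, assume $P_L\Phi$ is a Riesz basis in $L$ and let $T'\colon\ell^2(I)\to L$ be the associated isomorphism with $T'e_n=P_L\phi_n$. On the dense subset $\{\phi_n\}\subseteq S$ the bounded operators $P_L|S$ and $T'T^{-1}$ agree, so $P_L|S=T'T^{-1}$ on all of $S$, and as a composition of isomorphisms it is itself an isomorphism $S\to L$.

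For the quantitative estimates in \eqref{rb bound relations}, I would use the identity $(P_L|S)\circ T=T'$ from the proof of (ii)$\Rightarrow$(i) (which, as just observed, holds whenever either side makes sense). Then $B(P_L\Phi)=\|T'\|^2\le \|P_L|S\|^2\|T\|^2=\|P_L|S\|^2 B(\Phi)$, and conversely $T^{-1}(P_L|S)^{-1}=T'^{-1}$ gives $\|T'^{-1}\|\le \|T^{-1}\|\,\|(P_L|S)^{-1}\|$, so $A(P_L\Phi)=\|T'^{-1}\|^{-2}\ge A(\Phi)\|(P_L|S)^{-1}\|^{-2}$.

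The only mildly subtle point is the converse direction: one needs to identify $P_L|S$ with $T'T^{-1}$ rather than just argue abstractly that it is injective and surjective. Density of $\mathrm{span}\,\Phi$ in $S$ together with continuity of both operators makes this identification immediate, so no real obstacle arises; the lemma is essentially a bookkeeping exercise once the correct operator $(P_L|S)\circ T$ is isolated.
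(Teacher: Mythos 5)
Your proposal is correct and follows essentially the same route as the paper: both arguments hinge on identifying $P_L|S$ with $T'T^{-1}$ (the paper's $T_LT_S^{-1}$) on the dense span of $\Phi$ in $S$, and then reading off the bounds from the factorization $T'=(P_L|S)\circ T$ via submultiplicativity of operator norms. No gaps; the minor phrasing slip of calling $\{\phi_n\}$ itself dense (rather than its span) is corrected in your final paragraph.
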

\begin{proof}
Assume that $(i)$ holds. As $\Phi$ and $P_L\Phi$ are Riesz bases in $S$ and $L$ respectively, there exist isomorphisms
$T_S:\ell^2(I)\rightarrow S$ and $T_L:\ell^2(I)\rightarrow L$
which satisfy $T_Se_n=\phi_n$ and $T_Le_n=P_L\phi_n$ for every $n\in I$. It follows that
$
P_L\phi_n=T_LT_S^{-1}\phi_n
$. Since $\Phi$ spans $S$ we may conclude that
$
P_L|S=T_LT_S^{-1},
$
and so that $P_L|S$ is also an isomorphism. The opposite direction may be obtained in a similar way.
Next, recall that $\|T_S^{-1}\|^{-2}=A(\Phi)$ and $\|T_L^{-1}\|^{-2}=A(P_L\Phi)$. The relation $T_L=P_LT_S$ may therefore be applied to obtain the first estimate in \ref{rb bound relations}. The second estimate is obtained in a similar way.
\end{proof}

\subsection{First observations regarding Riesz bases of exponentials}\label{shift by y}

Let $\Lambda\subseteq\R$ and consider the system
\[
E(\Lambda):=\{e^{2\pi i\lambda t}\}_{\lambda\in\Lambda}.
\]
We are interested in cases where $E(\Lambda)$ is a Riesz basis in $L^2[0,1]$. Our goal is to estimate the Riesz basis bounds in such cases. For brevity we denote them by $A(\Lambda):=A(E(\Lambda))$ and $B(\Lambda):=B(E(\Lambda))$.

Observe that if $E(\Lambda)$ is a Riesz basis in $L^2[0,1]$ then $\L$ is uniformly discrete (recall (\ref{seperated})). Indeed, to see this apply (\ref{rb def 2}) with $a_{\l}=a_{\l'}=1$ and all other coefficients equal $0$. In what follows we may therefore assume that $\L$ is uniformly discrete, as we will do throughout. The largest $\delta>0$ for which  (\ref{seperated}) holds is called the \textit{separation constant} of $\Lambda$. We denote it by $\delta(\Lambda)$.

It is well known that if $\L$ is uniformly discrete then $E(\Lambda)$ is a Bessel system. In this case the right hand inequality in (\ref{rb def 2}) holds with the estimate
\begin{equation}\label{exp bessel}
B(\Lambda)\leq \frac{8\pi}{\min\{\delta(\Lambda), 1\}},
\end{equation}
(see e.g. \cite{OlUlbook}, Proposition 2.7). We therefore focus our attention on the lower Riesz basis bound, $A(\Lambda)$.

Following Pavlov, our first step will be to apply Lemma \ref{lemma: rb and projection} with $L^2(\R^+)$ and $L^2[0,1]$ playing the roles of $H$ and $L$ respectively (here $\R^+:=[0,\infty]$). As $E(\L)\nsubseteq L^2(\R^+)$ we apply Lemma \ref{lemma: rb and projection} with an auxiliary systems of the form $E(\L_y)$, where
\[
\Lambda_y:=\Lambda+iy, \quad y>0.
\]
One can reedily check that $E(\L)$ is a Riesz basis in $L^2[0,1]$ if and only if the same is true for $E(\L_y)$ and moreover, that
\begin{equation}\label{L and Ly}
A(\L_y)\leq A(\L)\qquad \forall y>0.
\end{equation}

Denote the closed span of $E(\L_y)$ in $L^2(\R^+)$ by $\mathcal{E}(\L_y)$.
We first estimate the lower Riesz basis bound of $E(\L_y)$ in $\mathcal{E}(\L_y)$.

\begin{lemma}\label{lemma: ingham}
Let $\L\subseteq\R$ be a uniformly discrete sequence and let $y>0$. Then the system $E(\L_y)$ is a Riesz basis in $\mathcal{E}(\L_y)$ and its lower Riesz basis bound is no smaller than
\[
 \frac{1}{7\delta(\Lambda)}e^{-{8\pi y}/{\delta(\Lambda)}}.
\]
\end{lemma}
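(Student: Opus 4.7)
Since $y>0$, each function $e^{2\pi i(\lambda+iy)t}=e^{-2\pi y t}e^{2\pi i\lambda t}$ belongs to $L^2(\R^+)$, and completeness of $E(\Lambda_y)$ in $\mathcal{E}(\Lambda_y)$ is automatic from the definition $\mathcal{E}(\Lambda_y):=\overline{\mathrm{span}}\,E(\Lambda_y)$. The upper Riesz basis bound (Bessel property) is standard: splitting $\R^+$ into unit intervals, applying the classical bound \eqref{exp bessel} to the trigonometric polynomial $P(t)=\sum_\lambda a_\lambda e^{2\pi i\lambda t}$ on each, and summing the geometric series $\sum_k e^{-4\pi y k}$ produces a finite upper bound. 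The whole content of the lemma is therefore the lower estimate
\[
\int_0^\infty |P(t)|^2\,e^{-4\pi y t}\,dt \;\geq\; \frac{1}{7\delta(\Lambda)}\,e^{-8\pi y/\delta(\Lambda)}\sum_\lambda |a_\lambda|^2
\]
for every finitely supported $(a_\lambda)$.

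Abbreviating $\delta=\delta(\Lambda)$, the plan is to localize the integral to a finite interval on which Ingham's inequality is effective \emph{and} on which the weight $e^{-4\pi y t}$ has not decayed too much. The natural and essentially forced choice is the interval $[0,2/\delta]$: restricting the integral and bounding the weight below by its minimum $e^{-4\pi y\cdot 2/\delta}=e^{-8\pi y/\delta}$ gives
\[
\int_0^\infty |P(t)|^2\,e^{-4\pi y t}\,dt \;\geq\; e^{-8\pi y/\delta}\int_0^{2/\delta}|P(t)|^2\,dt.
\]
After translating by $1/\delta$ (which only multiplies each $a_\lambda$ by the unimodular factor $e^{2\pi i\lambda/\delta}$ and so preserves the $\ell^2$ norm), the remaining integral equals the $L^2$-norm squared of a $\Lambda$-spectrum trigonometric polynomial on the symmetric interval $[-1/\delta,1/\delta]$. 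Its length $2/\delta$ lies safely above the critical Ingham threshold $1/\delta$ dictated by the spectral gap $\delta$, so Ingham's inequality applies.

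The only remaining (and purely quantitative) task is to check that the explicit Ingham constant at $L\delta=2$ is no smaller than $1/7$. Using the standard sharp form
\[
\int_{-T}^{T}|P(t)|^2\,dt \;\geq\; \frac{2T}{\pi}\Bigl(1-\frac{1}{(2T\delta)^2}\Bigr)\sum_\lambda |a_\lambda|^2
\]
with $T=1/\delta$ yields a constant $\tfrac{3}{2\pi\delta}\approx 0.477/\delta$, comfortably exceeding the target $1/(7\delta)\approx 0.143/\delta$; the absolute constant $7$ is a convenient round number that absorbs the Ingham numerics. The main conceptual point to highlight is the optimality of the choice $L=2/\delta$: any smaller $L$ renders Ingham vacuous, while any larger $L$ worsens the weight factor $e^{-4\pi y L}$ below $e^{-8\pi y/\delta}$.
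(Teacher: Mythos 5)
Your proof is correct and essentially identical to the paper's: both fix the interval $[0,2/\delta(\Lambda)]$, bound the weight $e^{-4\pi y t}$ from below by $e^{-8\pi y/\delta(\Lambda)}$ there, and invoke Ingham's inequality, whose constant on an interval of length $2/\delta(\Lambda)$ exceeds $1/(7\delta(\Lambda))$ (the paper records the weight step as $A_a(\Lambda)e^{-4\pi ya}\leq A_a(\Lambda_y)$ with $a=2/\delta(\Lambda)$). One side remark is slightly off: a smaller interval does not render Ingham vacuous until its length drops to $1/\delta(\Lambda)$, so $2/\delta(\Lambda)$ is a convenient trade-off rather than a forced choice — but this does not affect the argument.
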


\begin{proof}
Given $a>0$, it is straightforward to verify that $E(\Lambda)$ satisfies (\ref{rb def 2}) over the interval $[0,a]$ if and only if the same is true for $E(\Lambda_y)$. Moreover, we have
\begin{equation}\label{riesz on a}
A_a(\Lambda)e^{-4\pi ya}\leq A_a(\Lambda_y),
\end{equation}
where $A_a$ denotes the lower bound in (\ref{rb def 2}) when the systems are considered over $[0,a]$ (so that $A_1(\L)=A(\L)$).

Fix $a=2/\delta(\Lambda)$. It follows from Ingham's theorem (see e.g. \cite{Youngbook}, p 136) that $E(\L)$ satisfies (\ref{rb def 2}) over the interval $[0,a]$ and
\[
A_a(\Lambda)\geq \frac{a}{\pi^2}(1-\frac{1}{(a\delta(\Lambda))^2})\geq \frac{1}{7\delta(\Lambda)},
\]
 Combined with the estimate in (\ref{riesz on a}) we find that for $y>0$, the system $E(\L_y)$  satisfies (\ref{rb def 2}) over the interval $[0,a]$ and
  \[
\frac{1}{7\delta(\Lambda)}e^{-8\pi y/\delta(\Lambda)}\leq A_a(\Lambda_y).
\]
It follows that $E(\L_y)$ satisfies (\ref{rb def 2}) also over $\R^+$, and that the corresponding lower bound satisfies the required estimate.
\end{proof}

Lemmas \ref{lemma: rb and projection} and \ref{lemma: ingham}, combined with the estimate (\ref{L and Ly}), give the following.

\begin{proposition}\label{thm; exp rb and projections}
Let $\Lambda\subseteq\R$ be uniformly discrete and denote by $P_{L^2[0,1]}$ the orthogonal projection from $L^2(\R^+)$ onto $L^2[0.1]$. Then, $E(\Lambda)$ is a Riesz basis in $L^2[0,1]$ if and only if there exists $y>0$ so that the restricted projection $P_{L^2[0,1]}|\mathcal{E}(\L_y):\mathcal{E}(\L_y)\rightarrow L^2[0,1]$ is an isomorphism. Moreover, in this case these restricted projections are isomorphisms for all $y>0$, and we have
\begin{equation}\label{eq: riesz bound estimate abstract}
A(\Lambda)\geq \sup_y  \frac{1}{7\delta(\Lambda)}e^{-{8\pi y}/{\delta(\Lambda)}}\big\|\big(P_{L^2[0,1]}|\mathcal{E}(\L_y)\big)^{-1}\big\|^{-2}.
\end{equation}
\end{proposition}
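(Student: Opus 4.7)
The plan is to apply Lemma \ref{lemma: rb and projection} with $H=L^2(\R^+)$, $L=L^2[0,1]$, $\Phi=E(\L_y)$, and $S=\mathcal{E}(\L_y)$, for each $y>0$. The first observation is that the orthogonal projection $P_{L^2[0,1]}:L^2(\R^+)\to L^2[0,1]$ simply restricts the elements $e^{2\pi i\l_y t}=e^{-2\pi yt}e^{2\pi i\l t}$ of $L^2(\R^+)$ to the interval $[0,1]$. Hence $P_{L^2[0,1]}E(\L_y)$ coincides with the family $E(\L_y)$ viewed inside $L^2[0,1]$. Moreover, multiplication by $e^{2\pi yt}$ is a bounded invertible operator on $L^2[0,1]$ sending $E(\L_y)$ to $E(\L)$, so $E(\L)$ is a Riesz basis in $L^2[0,1]$ if and only if $E(\L_y)$ is, and the inequality $A(\L_y)\leq A(\L)$ stated in (\ref{L and Ly}) holds.

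By Lemma \ref{lemma: ingham}, $E(\L_y)$ is a Riesz basis in $\mathcal{E}(\L_y)$ for every $y>0$, with quantitative lower bound $\tfrac{1}{7\delta(\L)}e^{-8\pi y/\delta(\L)}$. This verifies the hypothesis of Lemma \ref{lemma: rb and projection}, which then gives: $P_{L^2[0,1]}E(\L_y)$ is a Riesz basis in $L^2[0,1]$ if and only if $P_{L^2[0,1]}|\mathcal{E}(\L_y)$ is an isomorphism. Combined with the first paragraph this yields the stated characterization, and since the Riesz basis property of $E(\L)$ does not depend on $y$, whenever it holds the projection is an isomorphism for every $y>0$. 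For the quantitative estimate, the first inequality in (\ref{rb bound relations}) reads in this setting
\[
A(\L_y)=A\bigl(P_{L^2[0,1]}E(\L_y)\bigr)\;\geq\;\tfrac{1}{7\delta(\L)}e^{-8\pi y/\delta(\L)}\cdot\bigl\|(P_{L^2[0,1]}|\mathcal{E}(\L_y))^{-1}\bigr\|^{-2},
\]
which combined with $A(\L)\geq A(\L_y)$ from (\ref{L and Ly}) and taking the supremum over $y>0$ yields (\ref{eq: riesz bound estimate abstract}).

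The only genuine subtlety here is notational: $E(\L_y)$ is used both as a family in $L^2(\R^+)$, where its span is $\mathcal{E}(\L_y)$ and Lemma \ref{lemma: ingham} applies, and as a family in $L^2[0,1]$, where its lower Riesz basis bound is the quantity $A(\L_y)$ of interest. The argument hinges on the identification $P_{L^2[0,1]}\bigl(e^{2\pi i\l_y t}\bigr)=e^{2\pi i\l_y t}\cdot\mathbf{1}_{[0,1]}$, which is what lets one transfer between the two interpretations via Lemma \ref{lemma: rb and projection}, and on tracking the corresponding Riesz basis constants consistently.
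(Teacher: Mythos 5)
Your proof is correct and follows exactly the route the paper intends: the paper's entire "proof" of this proposition is the one-line remark that Lemmas \ref{lemma: rb and projection} and \ref{lemma: ingham} together with (\ref{L and Ly}) give the result, and you have simply (and accurately) filled in the details, including the key identification $P_{L^2[0,1]}E(\L_y)=E(\L_y)\cdot\mathbf{1}_{[0,1]}$ and the correct bookkeeping of the constants in (\ref{rb bound relations}).
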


Let $-\Lambda:=\{-\l:\:\l\in\L\}$. It is straightforward to check that $E(\L)$ is a Riesz basis in $L^2[0,1]$ if and only if $E(-\Lambda)$ is, and that $A(\L)=A(-\L)$. We may therefore write  $\mathcal{E}((-\L)_y)$ instead of $\mathcal{E}(\L_y)$ in the estimate (\ref{eq: riesz bound estimate abstract}). We will do so for technical reasons, which will be made clear in the next subsection.

\subsection{A reformulation in terms of model spaces}
	
We denote the inverse Fourier transform by $\F^*$ and use the normalization
\[
\F^*f(z)=\int_{\R}f(t)e^{2\pi i zt}dt\qquad f\in L^1(\R),
\]
with the standard extension to $L^2(\R)$. With this normalization $\F^*$ is an isometry from $L^2(\R)$ onto itself. The Paley-Wiener theorem asserts that the image of $L^2(\R^+)$ under this isometry is the Hardy space over the upper half plane, $\mathbf{H}^2:=\mathbf{H}^2(\mathbb{C}^+)$, consisting of all function $f$ which are analytic in the upper half plane and satisfy
\[
\sup_{y>0}\int_{\R}|f(x+iy)|^2dx<\infty.
\]
We refer the reader to e.g. \cite{Garbook, Koosis} for basic information related to Hardy spaces.

 To evaluate the operator norms in (\ref{eq: riesz bound estimate abstract}) (after replacing  $\mathcal{E}(\L_y)$ by $\mathcal{E}((-\L)_y)$ as described thereafter), we apply the inverse Fourier transform to the corresponding subspaces involved. Denote $S(z):=e^{2\pi iz}$, we keep this notation fixed throughout. Since the Fourier transform replaces translations with modulations we have
  \[
  \F^*L^2[1,\infty]= S \mathbf{H}^2.
   \]
  As $\F^*$ is an isometry, it preserves orthogonal complements and so
\[
\F^*L^2[0,1]=K_S:=  \mathbf{H}^2\ominus S \mathbf{H}^2.
\]
Here $K_S$ is the so called \textit{model space} generated by the inner function $S$.

It turns out that the image of $\mathcal{E}((-\L)_y)$ under $\F^*$ has a similar structure. To see this first note that since,
\[
\F^*f(\l+iy)=\langle f, e^{2\pi i (-\l+iy)t}\rangle\qquad f\in L^2(\R^+),
\]
a function $f\in L^2(\R^+)$ is orthogonal to $\mathcal{E}((-\L)_y)$ if and only if $\F^*f$ vanishes on $\L_y$. Since $\Lambda$ is uniformly discrete, the sequence $\L_y$ satisfies the \textit{Blaschke condition}
\[
\sum_{\l\in\L}\frac{y}{\l^2+y^2}<\infty,
\]
 and  so there exist unimodular constants $\varepsilon_{\l}$ so that the \textit{Blaschke product}
\[
B_{\L_y}(z):=\prod_{\lambda\in\Lambda}\varepsilon_{\l}\frac{1-\frac{z}{\l+iy}}{1-\frac{z}{\l-iy}}
\]
converges uniformly on compact sets in the upper half plane  (see e.g.  \cite{Garbook, Koosis}). We conclude that $f$ is orthogonal to $\mathcal{E}((-\L)_y)$ if and only if $\F^*f$ is of the form $\F^*f=B_{\L_y}g$ for some $g\in \mathbf{H}^2$. This can be rewritten as
\[
 \F^*\Big(\mathcal{E}((-\L)_y)^{\perp}\Big)=B_{\Lambda_y}\mathbf{H}^2.
 \]
Recalling that $\F^*$ is an isometry, we find that
\[
\F^*\mathcal{E}((-\L)_y)=K_{B_{\L_y}}:= \mathbf{H}^2\ominus B_{\L_y}\mathbf{H}^2.
\]
Here $K_{B_{\L_y}}$ is the model space generated by the inner function ${B_{\L_y}}$.

With these relations established, Proposition \ref{thm; exp rb and projections} may be reformulated as follows, where we denote the projection of $\mathbf{H}^2$ onto $K_S$ by $P_S:=P_{K_S}$ for brevity.
\begin{thmbis}{thm; exp rb and projections}\label{thm; exp rb modal}
\textit{Let $\Lambda\subseteq\R$ be a uniformly discrete sequence. Then, $E(\Lambda)$ is a Riesz basis in $L^2[0,1]$ if and only if there exists $y>0$ so that the restricted projection $P_S|K_{B_{\L_y}}:K_{B_{\L_y}}\rightarrow K_S$ is an isomorphism. Moreover, in this case these restricted projections are isomorphisms for all $y>0$, and we have}
\begin{equation}\label{eq: riesz bound estimate modal}
A(\Lambda)\geq \sup_y \frac{1}{7\delta(\Lambda)}e^{-{8\pi y}/{\delta(\Lambda)}}\big\|\big(P_S|K_{B_{\L_y}}\big)^{-1}\big\|^{-2}.
\end{equation}
\end{thmbis}

\subsection{A reformulation in terms of the Riesz projection}\label{Riesz projection subsection}
It turns out that the operator norm which appears in (\ref{eq: riesz bound estimate modal}) is equal to the norm of a certain weighted Riesz projection. To give an explicit formulation, we first define the Riesz projection as an operator in weighted $L^2$ spaces.

The presentation of the Riesz projection over the torus is simple: The image of a trigonometric polynomial is obtained by changing the coefficients of exponentials with negative frequencies to zero. We present the Riesz projection over $\R$ in a similar way, after applying the canonical isometry between the corresponding Hardy spaces. This (somewhat cumbersome) way to present the Riesz projection allows us to give a basic proof of Theorem \ref{rb as riesz} below, while doing so over $\R$. In what follows, as customary, we identify functions analytic in the upper (or lower) half plane with their boundary values on $\R$. We use the convention $\Pi(x):=1/(1+x^2)$ and say that $f\in L^p_{\Pi}(\R)$ if
\[
\int_{\R}\frac{|f(x)|^p}{1+x^2}\: dx <\infty.
\]

Let $\mathbb{D}$ be the unite disc, and let $\mathbf{H}^2(\mathbb{D})$ be the corresponding Hardy space. Consider the function $\phi:\mathbb{C}^+\rightarrow \mathbb{D}$ given by
 \[
 \phi(z)=\frac{i-z}{i+z}.
 \]
It is well known that the operator $T:\mathbf{H}^2(\mathbb{D})\rightarrow \mathbf{H}^2$, defined by
 \[
 (TF)(z)=\frac{1}{\sqrt{\pi}}\frac{F(\phi(z))}{i+z},
 \]
is an isometric isomorphism (see e.g. \cite{Garbook} page 52). For $n\in\N$ the monomials $Q_n(w)=w^{n-1}\in \mathbf{H}^2(\mathbb{D})$ satisfy $T(Q_n)=r_n$, where
  \[
 r_n(z)=\frac{(i-z)^{n-1}}{\sqrt{\pi}(i+z)^{n}}\qquad n\in\N.
 \]
 Denote $M:=\textrm{span}\{r_n:\:n\in\N\}$.
Since $T$ is an isometry, $M$ is dense in $\mathbf{H}^2$.

In a similar way we may conclude that if $h$ is outer in the upper half plane and $h\in L^2_{\Pi}(\R)$ then $Mh$ is dense in $\mathbf{H}^2$. Indeed, the condition on $h$ implies that there exists an outer function $H\in \mathbf{H}^2(\mathbb{D})$ so that $H(\phi(z))=h(z)$ (see e.g. \cite{Nikbook} page 83). It follows that $T(Q_nH)=r_nh$. Since $H$ is outer, Beurling's theorem implies that the set $\{Q_nH:\:n\in \N\}$ is complete in  $\mathbf{H}^2(\mathbb{D})$  (see e.g. \cite{Koosis}). Since $T$ is an isometry the same holds for $\{r_nh:\:n\in \N\}$ in $\mathbf{H}^2$.

Let $\mathbf{H}^2_{-}$ denote the orthogonal complement of  $\mathbf{H}^2$ in $L^2(\R)$, so that $\mathbf{H}^2_{-}=\F^*L^2(\R^-)$ is the Hardy space in the lower half plane. For $f\in \mathbf{H}^2$ we use the convention $f^*(z)=\bar{f}(\bar{z})\in \mathbf{H}^2_{-}$. Given $n\in \N$ put
$
r_{-n}:=r^*_n(z)
$
 and let $N:=\textrm{span}\{r_{-n}:\:n\in\N\}$. Due to symmetry, we find that $N$ is dense in $\mathbf{H}^2_{-}$, and that if $h$ is outer in the upper half plane with $h\in L^2_{\Pi}(\R)$ then $\{Nh^*\}$ is dense there as well. In particular, this discussion implies that $E:=M+ N$ is dense in $L^2(\R)$.
 
Applying similar considerations one may show that if $f\in L^1(\R)$ and
 \begin{equation}\label{weighted L1}
 \int_{\R}f(x)\Big(\frac{i-x}{i+x}\Big)^n\: dx=0,\qquad \forall n\in\Z,
 \end{equation}
 then $f=0$ almost everyehere. (This is the analog, under an appropriate isometry, of the fact that the Fourier transform is an injection on $L^1(\T)$).

Let $w\in L^1_{\Pi}(\R)$, $w> 0$ a.e.,  and consider the corresponding weighted space
   \[
 L^2_w(\R):=\{g:\: \|g\|_w^2:=\int_{\R}|g|^2wdx <\infty\}.
 \]
Observe that $E=M+N$ is a subset of $L^2_w(\R)$, and that it is dense there. Indeed, assume that $g\in  L^2_w(\R)$ is orthogonal to $M+N$ in the space. Since $g\in  L^2_w(\R)$ and  $w\in L^1_{\Pi}(\R)$, we have $gw/(x+i)\in L^1(\R)$. Put $f:=gw/(x+i)$. Then, since $g$ is orthogonal to $M+N$ in $L^2_w(\R)$ , $f$ satisfies (\ref{weighted L1}). It follows that $f=0$ almost everywhere. As $w\neq 0$ almost everywhere, we conclude that $g=0$ almost everywhere.

For a finite linear combination
\[f=\sum_{n=-K,n\neq 0}^K a_nr_n\in E, \] the Riesz projection is defined to be the projection on $M$ along $N$, that is,
$
Rf=\sum_{1}^{K}a_nr_n.
$
We say that the Riesz projection is bounded in $L^2_w(\R)$ if
\[
\|Rf\|_{L^2_w}\leq \|R\|_w\|f\|_{L^2_w} \qquad \forall f\in E,
\]
where $\|R\|_w$ is a positive constant. If this happens $R$ can be extended to a bounded linear operator on $L^2_w(\R)$. The following lemma is well known, we give a sketch of the proof for completeness.  

\begin{lemma}\label{1/h}
Let $w\in L^1_{\Pi}(\R)$, $w> 0$ a.e. If the Riesz projection is bounded in $L^2_w(\R)$ then $1/w\in L^1_{\Pi}(\R)$ and the Riesz projection is bounded in $L^2_{1/w}(\R)$ as well. In particular, this implies that there exists a function $h$, outer in the upper half plane, so that $w=|h|^2$.
\end{lemma}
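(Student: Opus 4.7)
The plan is to combine two ingredients: self-adjointness of $R$ in unweighted $L^2$, and the Hunt--Muckenhoupt--Wheeden characterization of weights for which the Riesz projection is bounded. To start, recall that the isometry $T:\mathbf{H}^2(\mathbb{D})\to \mathbf{H}^2$ carries the orthonormal basis $\{w^{n-1}\}_{n\geq 1}$ of $\mathbf{H}^2(\mathbb{D})$ to $\{r_n\}_{n\geq 1}$, and by symmetry $\{r_{-n}\}_{n\geq 1}$ is an orthonormal basis of $\mathbf{H}^2_{-}$. Hence $\{r_n\}_{n\in\Z\setminus\{0\}}$ is orthonormal in $L^2(\R)=\mathbf{H}^2\oplus \mathbf{H}^2_{-}$, so $R$ is the orthogonal projection onto $\mathbf{H}^2$ and therefore self-adjoint in the unweighted inner product: $\int_\R (Rf)\overline g\,dx=\int_\R f\,\overline{Rg}\,dx$ for all $f,g\in E$.

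The main step is to establish $1/w\in L^1_\Pi(\R)$. For this I would invoke the necessity direction of the Hunt--Muckenhoupt--Wheeden theorem: boundedness of $R$ on $L^2_w(\R)$ forces $w$ to satisfy the Muckenhoupt $A_2$ condition
\[
\sup_{I\subset\R}\frac{1}{|I|^2}\int_I w\,dx\int_I\frac{dx}{w}<\infty,
\]
which is patently symmetric under $w\leftrightarrow 1/w$; combined with the self-improvement of $A_2$ to some $A_{2-\varepsilon}$ and with the hypothesis $w\in L^1_\Pi$, one deduces $1/w\in L^1_\Pi$. This is the step I expect to be the main technical obstacle; in a sketch I would cite the classical theorem rather than reproduce its proof.

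With $1/w\in L^1_\Pi$ in hand, $E\subset L^2_{1/w}(\R)$ and the pairing $(f,g)\mapsto \int_\R f\bar g\,dx$ makes $L^2_w$ and $L^2_{1/w}$ mutually dual via Cauchy--Schwarz. The self-adjointness above then yields, for every $f\in E$,
\[
\|Rf\|_{1/w}=\sup_{g\in E,\,\|g\|_w\leq 1}\Big|\int_\R f\,\overline{Rg}\,dx\Big|\leq \|R\|_w\|f\|_{1/w},
\]
so $R$ extends to a bounded operator on $L^2_{1/w}(\R)$ with $\|R\|_{1/w}\leq\|R\|_w$.

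Finally, since $\log^+w\leq w$ and $\log^+(1/w)\leq 1/w$ both lie in $L^1_\Pi$, we have $\log w\in L^1_\Pi$; the standard conjugate-Poisson construction then produces an outer function $h$ in the upper half plane whose boundary values satisfy $|h|^2=w$ a.e.~on $\R$, completing the plan.
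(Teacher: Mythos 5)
Your argument is correct, but it reaches the two key conclusions by a genuinely different route from the paper. The paper reformulates boundedness of $R$ on $L^2_w(\R)$ as the statement that $\{r_n,r_{-n}\}_{n\in\N}$ is a Schauder basis of $L^2_w(\R)$; minimality of that system is then what yields $1/w\in L^1_{\Pi}(\R)$ (since the dual functionals must be represented by $\{r_n/w, r_{-n}/w\}$, which forces $\int |r_n|^2/w<\infty$), and the fact that the dual system of a Schauder basis is again a Schauder basis gives boundedness of $R$ on $L^2_{1/w}(\R)$ --- all elementary and self-contained. You instead outsource the step $1/w\in L^1_{\Pi}(\R)$ to the necessity half of the Hunt--Muckenhoupt--Wheeden theorem together with the $w\leftrightarrow 1/w$ symmetry of the $A_2$ condition and the fact that $A_2$ weights lie in $L^1_{\Pi}(\R)$ (the paper's Lemma \ref{a_2 poisson}); note that the detour through self-improvement to $A_{2-\varepsilon}$ is unnecessary for this, and that one should check the necessity proof of HMW applies when $R$ is a priori defined only on the dense set $E$. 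Your second step --- transferring boundedness to $L^2_{1/w}(\R)$ via unweighted self-adjointness of $R$ on $E$ and the duality of $L^2_w$ and $L^2_{1/w}$ under the flat pairing --- is a clean operator-theoretic version of what the paper's dual-basis argument accomplishes, and it has the small bonus of exhibiting $\|R\|_{1/w}=\|R\|_w$ directly. The trade-off is that your proof leans on a substantially deeper external theorem (HMW necessity) for a lemma the paper proves by hand; on the other hand it avoids the "simple exercise" about minimality that the paper leaves to the reader.
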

\begin{proof}
 Multiplying by $(i+x)^n/(i-x)^n$, $n\in\Z$, if necessary, one can show that the Riesz projection is bounded in $L^2_w(\R)$  if and only if the partial sum operators
\[
S_K(\sum_{n=-L,n\neq 0}^L a_nr_n)=\sum_{n=-K,n\neq 0}^K a_nr_n
\]
are uniformly bounded in the space. This is equivalent to saying that $\{r_n,r_{-n}\}_{n\in\N}$ is a Schauder basis in $L^2_w(\R)$. In particular, it follows that $\{r_n,r_{-n}\}_{n\in\N}$ is minimal in the space (See e.g. \cite{Youngbook} for the definition of Schauder bases and for their basic properties). It is a simple exercise to check that the minimality property implies that $1/w\in L^1_{\Pi}(\R)$, and that the dual basis to $\{r_n,r_{-n}\}_{n\in\N}$ is $\{r_n/w,r_{-n}/w\}_{n\in\N}$. Since the dual basis is also a Schauder basis, it follows that the Riesz projection is bounded in $L^2_{1/w}(\R)$. Finally, as  $1/w\in L^1_{\Pi}(\R)$, we have  $\log w\in L^1_{\Pi}(\R)$. This implies that an outer function $h$ with $w=|h|^2$ exists.
\end{proof}

We are now ready to formulate a relation between the operator norm from (\ref{eq: riesz bound estimate modal}) and the norm of a weighted Riesz projection. The connection between the two operators has been established by Pavlov \cite{Pav}. The presentation here is different. In particular, it allows us to directly obtain the relation between the corresponding norms.

For an inner function $\Theta$ denote $K_{\Theta}= \mathbf{H}^2\ominus\Theta\mathbf{H}^2$ and let $P_{\Theta}$ be the orthogonal projection from $\mathbf{H}^2$ onto this space.

\begin{theorem}\label{rb as riesz}
Let $I$ and $\Theta$ be two inner functions in the upper half-plane. Assume that there exists an outer function $h$ so that on $\R$ we have $\overline{I}\Theta=\overline{h}/{h}$ and $h\in L^2_{\Pi}(\R)$. Denote $w=|h|^2$. Then the following are equivalent
\begin{itemize}
\item[i.] The restricted projection $P_{I}|K_{\Theta}$ is an isomorphism.

\vspace{5pt}

\item[ii.] The Riesz projection $R$ is a bounded operator on $L^2_w(\R)$.
\end{itemize}

Moreover, in this case we have $\|(P_{I}|K_{\Theta})^{-1}\|=\|R\|_{w}$.
\end{theorem}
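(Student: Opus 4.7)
I would reduce both (i) and (ii) to the same topological direct sum decomposition of $L^2(\R)$; the norm identity $\|(P_I|K_\Theta)^{-1}\|=\|R\|_{w}$ will then follow because both operator norms measure the same skew projection.

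\emph{Unitary identifications via $h$.} The multiplication maps $\mathcal{M}_h,\mathcal{M}_{\bar h}:L^2_w(\R)\to L^2(\R)$ defined by $g\mapsto gh$ and $g\mapsto g\bar h$ are both unitary, since $|gh|^2=|g\bar h|^2=|g|^2 w$; they differ by the unimodular multiplier $h/\bar h$, which by hypothesis equals $I\bar\Theta$. The density statements recorded just before Lemma \ref{1/h}---that $Mh$ is dense in $\mathbf{H}^2$ and $N\bar h$ is dense in $\mathbf{H}^2_-$ when $h$ is outer in $L^2_\Pi(\R)$---give $\mathcal{M}_h(\overline{M}^{L^2_w})=\mathbf{H}^2$ and $\mathcal{M}_{\bar h}(\overline{N}^{L^2_w})=\mathbf{H}^2_-$; multiplying the latter identity by $h/\bar h=I\bar\Theta$ yields $\mathcal{M}_h(\overline{N}^{L^2_w})=I\bar\Theta\,\mathbf{H}^2_-$.

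\emph{Translating both conditions.} By construction $R$ is the skew projection of $L^2_w$ onto $\overline{M}^{L^2_w}$ along $\overline{N}^{L^2_w}$; through $\mathcal{M}_h$ it becomes the skew projection $\pi$ of $L^2(\R)$ onto $\mathbf{H}^2$ along $I\bar\Theta\,\mathbf{H}^2_-$, so (ii) is equivalent to the topological direct sum $L^2(\R)=\mathbf{H}^2\dotplus I\bar\Theta\,\mathbf{H}^2_-$ with $\|R\|_w=\|\pi\|$. On the other side, using the orthogonal decomposition $\mathbf{H}^2=K_I\oplus I\mathbf{H}^2$, condition (i) is equivalent to $\mathbf{H}^2=K_\Theta\dotplus I\mathbf{H}^2$, with $\|(P_I|K_\Theta)^{-1}\|$ equal to the norm of the skew projection of $\mathbf{H}^2$ onto $K_\Theta$ along $I\mathbf{H}^2$.

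\emph{Matching the two skew projections, and the main obstacle.} For $f=f_1+If_2\in K_\Theta$ with $f_1=P_I f$, membership in $K_\Theta$ translates via $\bar\Theta I=h/\bar h$ into
\[
P_+\bigl(\bar\Theta f_1+(h/\bar h)f_2\bigr)=0 \quad\text{in }\mathbf{H}^2.
\]
I would then use the conjugation $F\mapsto\bar F$, which isometrically identifies $\mathbf{H}^2$ with $\mathbf{H}^2_-$, together with the unimodular twist by $I\bar\Theta$, to show that this matches the equation characterizing $\pi$ on the $\mathbf{H}^2_-$-part of $L^2(\R)$; each identification being isometric, this yields $\|\pi\|=\|(P_I|K_\Theta)^{-1}\|$. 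The main obstacle is precisely this last step: obtaining the exact norm identity rather than just a one-sided bound requires making the correspondence between the two skew decompositions fully explicit and verifying that it preserves norms on the nose.
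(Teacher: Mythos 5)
Your two reductions are individually correct: $\mathcal{M}_h$ is unitary from $L^2_w(\R)$ onto $L^2(\R)$, it conjugates $R$ into the skew projection $\pi$ of $L^2(\R)$ onto $\mathbf{H}^2$ along $I\overline{\Theta}\,\mathbf{H}^2_-$ with $\|R\|_w=\|\pi\|$, and $\|(P_I|K_\Theta)^{-1}\|$ is indeed the norm of the skew projection $Q$ of $\mathbf{H}^2$ onto $K_\Theta$ along $I\mathbf{H}^2$. But the theorem \emph{is} the assertion that these two decompositions are simultaneously topological with $\|\pi\|=\|Q\|$, and that is exactly the step you leave open and flag as ``the main obstacle.'' As written, your argument reformulates the statement rather than proving it. Moreover, the route you sketch for closing the gap --- conjugation $F\mapsto\overline{F}$ combined with the unimodular twist by $I\overline{\Theta}$ --- does not produce a norm-preserving correspondence between the two skew decompositions: $Q$ acts on $\mathbf{H}^2$ while $\pi$ acts on all of $L^2(\R)$, and the complementary subspaces differ by an entire extra summand $\mathbf{H}^2_-$ that has to be \emph{absorbed}, not transported by an isometry.

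The missing idea is precisely the paper's absorption argument. Pass first to the adjoint $P_\Theta|K_I=(P_I|K_\Theta)^*$, whose inverse has the same norm. Multiplying your complementary subspace by $\Theta$ turns it into $I\mathbf{H}^2_-$, which splits orthogonally as $K_I\oplus\mathbf{H}^2_-$ (the identity $\Theta\mathbf{H}^2_-=K_\Theta\oplus\mathbf{H}^2_-$ with $I$ in place of $\Theta$), and the $\mathbf{H}^2_-$ summand is orthogonal to $\Theta\mathbf{H}^2\subseteq\mathbf{H}^2$. Hence for $g=k+m$ with $k\in K_I$, $m\in\mathbf{H}^2_-$, the relevant Rayleigh quotient is $\bigl(\|P_\Theta k\|^2+\|m\|^2\bigr)/\bigl(\|k\|^2+\|m\|^2\bigr)$, and the elementary inequality $\frac{a}{b}\le\frac{a+c}{b+c}$ shows that its infimum over all of $I\mathbf{H}^2_-$ equals its infimum over $K_I$ alone. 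This one computation is what identifies the angle defining $\|\pi\|$ with the angle defining $\|(P_\Theta|K_I)^{-1}\|$ and yields the equality of norms on the nose; combined with Lemma \ref{1/h} (run the same argument for $1/w$ to get $P_I|K_\Theta$ bounded below, hence its adjoint onto) it also supplies the surjectivity you would otherwise still owe. Without this step, or an equivalent one, the proof is incomplete.
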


\begin{proof}
It is straightforward to check that $P_{\Theta}|K_I=(P_{I}|K_{\Theta})^*$ and so the operators  $P_{\Theta}|K_I$ and $P_{I}|K_{\Theta}$ are invertible together, and their inverses have equal norm. For technical reasons it will be more convenient for us to prove the statement of the theorem with the operator  $P_{\Theta}|K_I$, as we will now proceed to do.

First, observe that multiplication by $\Theta$ is an isometry on  $L^2(\R)$ and so
\begin{equation}\label{theta isometry}
L^2(\R)=\Theta \mathbf{H}^2 \oplus \Theta \mathbf{H}^2_{-}.
\end{equation}
On the other hand, the definition of $K_{\Theta}$ implies that $L^2(\R)=\Theta\mathbf{H}^2 \oplus K_{\Theta}\oplus \mathbf{H}^2_{-}$. Consequently we have
\begin{equation}\label{model space identity}
 \Theta \mathbf{H}^2_{-}= K_{\Theta}\oplus \mathbf{H}^2_{-}.
\end{equation}

Let $\widetilde{P}_{ \Theta \mathbf{H}^2_{-}}:L^2(\R)\rightarrow \Theta \mathbf{H}^2_{-}$ be the orthogonal projection from $L^2(\R)$ onto $\Theta \mathbf{H}^2_{-}$. If $g\in L^2(\R)$ satisfies $g=g_1+g_2$, with $g_1\in \mathbf{H}^2$ and $g_2\in \mathbf{H}^2_{-}$, then the identity (\ref{model space identity}) implies that $\widetilde{P}_{ \Theta \mathbf{H}^2_{-}}g= P_{\Theta}g_1 +g_2$. It follows that $\|\widetilde{P}_{ \Theta \mathbf{H}^2_{-}}g\|^2= \|P_{\Theta}g_1\|^2 +\|g_2\|^2$. This, combined with the estimate $\frac{a}{b}\leq \frac{a+c}{b+c}$, which holds whenever $a\leq b$ and $c>0$, implies that
\begin{equation}\label{first identity}
\inf_{g\in K_I} \frac{\|P_{\Theta}g\|^2}{\|g\|^2}=\inf_{g\in K_{I}\oplus \mathbf{H}^2_{-}} \frac{\|\widetilde{P}_{ \Theta \mathbf{H}^2_{-}}g\|^2}{\|g\|^2}=\inf_{g\in I\mathbf{H}^2_{-}} \frac{\|\widetilde{P}_{ \Theta \mathbf{H}^2_{-}}g\|^2}{\|g\|^2},
\end{equation}
where the last equality is due to (\ref{model space identity}).

The identity (\ref{theta isometry}) and the definition of a projection allow us to rewrite (\ref{first identity}) as
\[
\inf_{g\in K_I} \frac{\|P_{\Theta}g\|^2}{\|g\|^2}=\inf_{g\in I\mathbf{H}^2_{-}, f\in \Theta \mathbf{H}^2} \frac{\|g+f\|^2}{\|g\|^2}.
\]

Recall that, as $h$ is outer and $h\in L^2_{\Pi}(\R)$, the sets $h^*N$ and $hM$ are dense in $\mathbf{H}^2_{-}$ and $\mathbf{H}^2$ respectively. The right hand side in the last equality can therefore be rewritten as
\[
\inf_{G\in N, F\in M} \frac{\|Ih^*G+\Theta h F\|^2}{\|h^*G\|^2}.
\]
 When restricted to the image of $P_{\Theta}|K_I$, the inverse operator therefore satisfies
\[
\|(P_{\Theta}|K_I)^{-1}\|^{2}=\sup_{G\in N, F\in M} \frac{\|h^*G\|^2}{\|Ih^*G+\Theta h F\|^2}.
\]
Recall that on $\R$ we have $h^*=\overline{h}$. Inserting the relation $\Theta\overline{I}= \overline{h}/{h}$ into the last expression we find that it equals
\[
\sup_{G\in N, F\in M} \frac{\|\overline{h}G\|^2}{\|\Theta hG+\Theta h F\|^2}=\sup_{G\in N, F\in M} \frac{\|G\|_w^2}{\|G+ F\|_w^2}=\|id-R\|^2_w=\|R\|^2_w.
\]

The above consideration proves that the Riesz projection is bounded if and only if $P_{\Theta}|K_I$ is bounded from below, and that the corresponding norms are equal. It remains to show that $P_{\Theta}|K_I$ is onto. Denote $k=h^{-1}$. By Lemma \ref{1/h} we know that $k\in L^2_{\Pi}$ and that the Riesz projection is bounded in $L^2_{|k|^2}(\R)$. The relation $\Theta\overline{I}= \overline{h}/{h}$ implies that $\overline{\Theta}I=\overline{k}/{k}$. We may therefore repeat the argument above to conclude that $P_{I}|K_{\Theta}$ is bounded from below. It follows that $P_{\Theta}|K_I=(P_{I}|K_{\Theta})^*$ is onto.
\end{proof}

\subsection{Norm estimates for the Riesz projection}\label{A_2 subsection}
In light of Proposition \ref{thm; exp rb modal} and Theorem \ref{rb as riesz}, results regarding norm estimates of the Riesz projection may be applied when studying lower Riesz basis bounds. The goal of this subsection is to shortly survey the literature concerning such estimates (and to note a slight improvement in one of them). The results below are formulated in terms of the Riesz projection. We remark that in the literature these results are commonly formulated in terms of the Hilbert transform. It is straightforward to verify that these operators have equivalent norms.

Weights $w$ for which the Riesz projection is bounded were characterized independently by Helson and Szeg\"{o} \cite{HS} and by Hunt, Muckenhoupt, and Wheeden \cite{HMW}. They present two different (albeit, as it turns out, equivalent) characterizations.

In \cite{HMW} Hunt, Muckenhoupt, and Wheeden prove that the Riesz projection is bounded if and only if $w\in A_2(\R)$, that is,
\begin{equation}\label{A_2 condition}
[w]_{A_2}:=\sup_{I=[a,b]\subseteq\R}\frac{1}{|I|^2}\int_Iw\int_I\frac{1}{w}<\infty.
\end{equation}
It was conjectured that there exists a universal constant $C>0$ so that the corresponding  operator norms satisfy
\begin{equation}\label{a2 bound}
\|R\|_w \leq C[w]_{A_2}.
\end{equation}
 This conjecture was confirmed by Petermichl, \cite{Peter}. The result is sharp in the sense that the linear power of $[w]_{A_2}$ is the best possible.

To present Helson and Szeg\"{o}'s characterization we first require some notations. We use the following convention for the (regularised version of the) Hilbert transform, which is well defined for $v\in L^1_{\mathbf{\Pi}}(\R)$
\[
\widetilde{v}(x)=\frac{1}{\pi}\textrm{P.V.}\int_{\R}\Big( \frac{1}{x-t}+\frac{t}{1+t^2}\Big)v(t) dt.
\]
In particular, with this convention the Hilbert transform is defined for $v\in L^{\infty}(\R)$. It is well known that for such $v$ we have $\widetilde{v}\in L^1_{\mathbf{\Pi}}(\R)$ and so $\widetilde{\tilde{v}}$ is defined and satisfies $\widetilde{\tilde{v}}=-v+const$. Denote
\[
\textrm{BMO}(\R):=\{u+ \widetilde{v}:\:u,v\in L^{\infty}(\R)\}.
\]
It follows that the Hilbert transform is defined on $\textrm{BMO}(\R)$ and maps it on itself.

%We note that the extension
%\begin{equation}\label{swartz}
%V(x)=\frac{i}{\pi}\int_{\R}\Big( \frac{1}{z-t}+\frac{t}{1+t^2}\Big)v(t) dt
%\end{equation}
%recovers the function $V$ from its real part $v$, provided $V(i)=0$ and $V\in H^{\infty}$. In particular this implies that the poisson extension of $\widetilde{v}$ is zero at %$i$, or in other words, that the collection $\widetilde{L^{\infty}}:=\{\widetilde{v}:v\in L^{\infty}\}$ does not contain a constant function.

In \cite{HS}, Helson and Szeg\"{o} prove that the Riesz projection is bounded in $L^2_w(\R)$ if and only if there exist $u,v\in L^{\infty}(\R)$ with $\|v\|_{\infty}< \pi/2$ so that
\begin{equation}\label{hs decomp}
w=e^{u+\widetilde{v}}.
\end{equation}
A corresponding estimate for the operator norms was recently obtained by Carro, Naibo, and Soria-Carro \cite{CaNaSoCa}. For $u\in L^{\infty}(\R)$ define
$\|u\|_{\infty}^0:=\inf_{c\in\R}\|u-c\|_{\infty}$ and note that if $u$ is real valued then $\sup u-\inf u=2\|u\|_{\infty}^0$. For $w$ as in (\ref{hs decomp}) Carro, Naibo, and Soria-Carro define
\[
[w]_{A_2(HS)}:=\inf_{u,v}\frac{e^{\|u\|_{\infty}^0}}{\cos \|v\|_{\infty}},
\]
where the infimum is taken over all $u,v\in L^{\infty}(\R)$ with $\|v\|_{\infty}< \pi/2$ for which (\ref{hs decomp}) holds. They show that
\begin{equation}\label{hs bound}
\|R\|_w \leq C[w]_{A_2(HS)},
\end{equation}
and that this estimate is sharp in the sense that there exist $w\in A_2(\R)$ for which the norms are equivalent. In \cite{CaNaSoCa} the estimate (\ref{hs bound}) is obtained via so called Rellich-type identities. It turns out that this estimate can be obtained also by a careful choice of constants in the Helson and Szeg\"{o} proof, and that the constant $C$ in (\ref{hs bound}) is redundant, that is,
\begin{equation}\label{hs bound -2}
\|R\|_w \leq [w]_{A_2(HS)}.
\end{equation}
We include this argument here. It is a direct consequence of the following three lemmas, the first of which asserts that the condition on $w$, imposed in Theorem \ref{rb as riesz}, was not restrictive. It can be found e.g. in \cite{HNP} (pp 66-67).

\begin{lemma}\label{a_2 poisson}
Let $w\in A_2(\R)$ then $w\in L^1_{\Pi}(\R)$.
\end{lemma}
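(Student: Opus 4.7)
The target estimate is $\int_\R w(x)/(1+x^2)\,dx<\infty$. The plan is to set $F(R):=\int_{-R}^{R} w$ and use Fubini (via the coarea representation of $1/(1+x^2)$) to rewrite
\[ \int_\R \frac{w(x)}{1+x^2}\,dx = \int_0^\infty F(R)\,\frac{2R}{(1+R^2)^2}\,dR, \]
so that the problem reduces to establishing a subquadratic bound $F(R)=\Bigoh{R^{2-\eps}}$ for some $\eps>0$ and all $R\geq 1$.

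A plain use of the $A_2$ condition on $I_R=[-R,R]$ falls just a hair short. Since $[w]_{A_2}<\infty$ forces both $w$ and $w^{-1}$ to be locally integrable on every bounded interval, I can fix $c_0:=\int_{-1}^{1}w^{-1}\in(0,\infty)$ and derive
\[ F(R)\int_{-R}^R w^{-1} \leq 4[w]_{A_2}R^2 \quad\Longrightarrow\quad F(R)\leq \frac{4[w]_{A_2}}{c_0}R^2. \]
With $F(R)\sim R^2$ the layer-cake tail diverges only logarithmically, so this crude bound is not enough; closing this small gap is the main obstacle.

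The extra mile comes from the self-improvement of Muckenhoupt weights: $w\in A_2(\R)$ implies $w\in A_{2-\eps}(\R)$ for some $\eps=\eps([w]_{A_2})>0$. Applied to $I_R$ this reads
\[ F(R)\Big(\int_{-R}^R w^{-1/(1-\eps)}\Big)^{1-\eps}\leq [w]_{A_{2-\eps}}(2R)^{2-\eps}, \]
and bounding the second factor below by its positive, finite value on $[-1,1]$ (finiteness being part of the $A_{2-\eps}$ condition) yields $F(R)\leq CR^{2-\eps}$ for all $R\geq 1$. Substituting into the layer-cake identity produces the convergent tail $\int^\infty R^{-1-\eps}\,dR$, completing the proof.

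The sole non-routine ingredient is the classical reverse H\"older (Muckenhoupt-Coifman-Fefferman) self-improvement theorem for $A_p$ weights, which I would cite rather than reprove; everything else is a direct manipulation of the $A_p$ condition on the symmetric intervals $[-R,R]$ combined with Fubini.
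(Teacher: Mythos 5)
Your argument is correct, and since the paper does not prove this lemma at all (it only cites [HNP, pp.~66--67]), your proof is a genuine, self-contained alternative. The structure is sound: the layer-cake identity $\int_\R w(x)(1+x^2)^{-1}dx=\int_0^\infty F(R)\,2R(1+R^2)^{-2}dR$ is a correct application of Tonelli, your observation that the raw $A_2$ bound $F(R)\lesssim R^2$ misses by exactly a logarithm is accurate, and the openness property $A_2(\R)\subseteq A_{2-\varepsilon}(\R)$ does close the gap, giving $F(R)\leq CR^{2-\varepsilon}$ and a convergent tail. It is worth being clear that the citation to the self-improvement theorem is not a convenience but a necessity: the family $w_\alpha(x)=|x|^{\alpha}$ lies in $A_2(\R)$ precisely for $|\alpha|<1$ and in $L^1_{\Pi}(\R)$ precisely for $\alpha<1$, so the exponent $2$ in $F(R)\lesssim R^2$ is genuinely critical and no argument using only the $A_2$ condition interval-by-interval can succeed; essentially all of the analytic content of the lemma is being delegated to the reverse H\"older/openness theorem. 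Two small points you should make explicit: the constant $\int_{-1}^{1}w^{-1/(1-\varepsilon)}$ must be checked to be strictly positive, which follows since $w<\infty$ a.e.\ (a consequence of local integrability, itself forced by the $A_2$ condition together with $w>0$ a.e.); and the contribution of $0\leq R\leq 1$ to the layer-cake integral is finite because $F(1)<\infty$, again by local integrability. Neither is a gap, only a line each to add.
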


The next lemma can be found e.g. in \cite{Nikbook} (combine Theorem 4.3.1(c) with the proof of Theorem 4.6.1(4)).

\begin{lemma}\label{nehari}
Let $w\in A_2(\R)$ and let $h$ be the outer function from Lemma \ref{1/h}, so that $w=|h|^2$. Then
\[
\|R\|^{-2}_w=1-\textrm{dist}^2_{L^{\infty}(\R)} \Big(\frac{\overline{h}}{h}, \mathbf{H}^{\infty}\Big),
\]
where  $\mathbf{H}^{\infty}$ denotes the $\infty$-norm Hardy space in the upper half plane.
\end{lemma}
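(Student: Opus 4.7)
The plan is to realize $\|R\|_w$ as the reciprocal sine of an angle between two closed subspaces of $L^2(\R)$, reduce the cosine of that angle to the norm of a Hankel operator with symbol $\phi:=\overline{h}/h$, and then invoke Nehari's theorem.

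First I would start from the identity
\[
\|R\|_w^2 = \sup_{F\in M,\, G\in N,\, F+G\neq 0}\frac{\|G\|_w^2}{\|F+G\|_w^2},
\]
which falls out of the same skew-projection computation that appears in the proof of Theorem~\ref{rb as riesz} (specialized to the case with no inner functions in play, and using that $\|R\|_w=\|\mathrm{id}-R\|_w$ for a non-trivial skew projection). Transporting the problem via the isometric isomorphism $J:L^2_w(\R)\to L^2(\R)$, $Jf:=fh$, and using the density of $hM$ in $\mathbf{H}^2$ recalled in the excerpt preceding Lemma~\ref{1/h}, the infimum over $F\in M$ in the denominator yields $\inf_{F\in M}\|h(F+G)\|_2 = \|P_-(hG)\|_2$, where $P_\pm$ are the orthogonal projections of $L^2(\R)$ onto $\mathbf{H}^2$ and $\mathbf{H}^2_-$. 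Writing $V:=J(\overline{N}^{L^2_w})$, this gives
\[
\|R\|_w^{-2} = \inf_{v\in V\setminus\{0\}}\frac{\|P_- v\|_2^2}{\|v\|_2^2} = 1 - \cos^2 \theta,
\]
where $\theta$ is the angle between $V$ and $\mathbf{H}^2$ inside $L^2(\R)$.

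Next I would identify $V$ with $hH^2_{-,w}$, where $H^2_{-,w}$ denotes the $L^2_w$-closure of $N$, and introduce the map $v\mapsto \tilde v:=\overline{\phi}\,\overline{v}$. For $v=hg$ with $g\in H^2_{-,w}$ one computes $\tilde v=h\overline{g}$; because multiplication by the outer function $h$ is an isometric isomorphism of the analytic weighted Hardy space $H^2_w$ (the $L^2_w$-closure of $M$) onto $\mathbf{H}^2$, the assignment $v\mapsto \tilde v$ is an isometric isomorphism $V\to\mathbf{H}^2$. Using $\overline{h}=\phi h$, the pairing transforms as
\[
\langle u,v\rangle_{L^2(\R)} = \int_\R u\overline{v}\,dx = \int_\R \phi\,u\,\tilde v\,dx = \langle H_\phi u,\overline{\tilde v}\rangle_{L^2(\R)},
\]
where $H_\phi u:=P_-(\phi u)$ is the Hankel operator $\mathbf{H}^2\to\mathbf{H}^2_-$ and $\overline{\tilde v}\in\mathbf{H}^2_-$. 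Taking suprema over unit vectors yields $\cos\theta=\|H_\phi\|$; combining with Nehari's theorem $\|H_\phi\|=\mathrm{dist}_{L^\infty(\R)}(\phi,\mathbf{H}^\infty)$ and the displayed formula then produces the claimed identity.

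The main obstacle I anticipate is the function-theoretic bookkeeping needed to rigorously justify the identification $V=hH^2_{-,w}$ and the isometry $v\mapsto \tilde v$. Specifically, one must show, using the outer factorization $w=|h|^2$ together with the $A_2$ hypothesis, that the $L^2_w$-closure of $N$ coincides with the natural ``anti-analytic weighted Hardy space,'' that multiplication by $h$ is an isometric isomorphism of $H^2_w$ onto $\mathbf{H}^2$ (and hence of $H^2_{-,w}$ onto the closed subspace $V\subset L^2(\R)$), and that the resulting skew decomposition of $L^2(\R)$ is closed. These are standard consequences of Smirnov-class and outer-function theory combined with the $A_2$ assumption, but they constitute the real substance of the argument.
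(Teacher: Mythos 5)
Your argument is correct and is essentially the proof the paper points to: the paper gives no proof of Lemma \ref{nehari} itself, citing Nikolskii, and the cited argument is exactly your chain (norm of the weighted Riesz projection $=$ reciprocal sine of the angle between $\mathbf{H}^2$ and $V=\frac{\overline{h}}{h}\mathbf{H}^2_{-}$, cosine of that angle $=$ norm of the Hankel operator $H_{\overline{h}/h}$, then Nehari). The bookkeeping you flag is unproblematic: since multiplication by $h$ is a surjective isometry $L^2_w(\R)\to L^2(\R)$ carrying the closure of $M$ onto $\mathbf{H}^2$ and the closure of $N$ onto the closed subspace $\frac{\overline{h}}{h}\mathbf{H}^2_{-}$, everything you need follows from the density statements already recorded before Lemma \ref{1/h}.
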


Finally, we apply the following lemma to obtain (\ref{hs bound -2}). 

\begin{lemma}
Let $w$ and $h$ be as in Lemma \ref{nehari}. Then
\[
1-\textrm{dist}^2_{L^{\infty}(\R)} \Big(\frac{\overline{h}}{h}, \mathbf{H}^{\infty}\Big)\geq [w]^{-2}_{A_2(HS)}
\]
\end{lemma}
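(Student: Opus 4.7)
The plan is to exhibit, for each Helson--Szegő decomposition $w = e^{u + \widetilde{v}}$ with $u,v \in L^{\infty}(\R)$ and $\|v\|_{\infty} < \pi/2$, an explicit $\phi \in \mathbf{H}^{\infty}$ satisfying
\[
\|\overline{h}/h - \phi\|_{\infty}^{2} \;\leq\; 1 - \frac{\cos^{2}\|v\|_{\infty}}{e^{2\|u\|_{\infty}^{0}}},
\]
and then take the infimum over all such decompositions. Since $\textrm{dist}^{2}_{L^{\infty}}(\overline{h}/h,\mathbf{H}^{\infty})$ is precisely the infimum of $\|\overline{h}/h - \phi\|_{\infty}^{2}$ over $\phi \in \mathbf{H}^{\infty}$, this will immediately yield the claimed bound.

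First I would make $\overline{h}/h$ explicit. Let $F_{1},F_{2}$ be the functions analytic in $\mathbb{C}^{+}$ whose boundary values on $\R$ are $u + i\widetilde{u}$ and $v + i\widetilde{v}$. Since $\log w = u + \widetilde{v} = \Re(F_{1} - iF_{2})$ on $\R$, the outer function with $|h|^{2} = w$ equals $\exp\bigl((F_{1}-iF_{2})/2\bigr)$ up to a positive constant, and a direct computation gives
\[
\overline{h}/h \;=\; e^{-i\,\Im(F_{1}-iF_{2})} \;=\; e^{i(v - \widetilde{u})} \qquad \text{on } \R.
\]
Replacing $u$ by $u - c$ for a suitable real constant $c$ (which only multiplies $w$ by a positive constant and so leaves $\overline{h}/h$ unchanged), I may further assume $\|u\|_{\infty} = \|u\|_{\infty}^{0} =: a$, so that $|u| \leq a$ a.e.; set $b := \|v\|_{\infty}$.

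The key observation is that $e^{-F_{1}} \in \mathbf{H}^{\infty}$ with $\|e^{-F_{1}}\|_{\infty} \leq e^{a}$: indeed $\Re F_{1}$ is the bounded Poisson extension of $u$ to $\mathbb{C}^{+}$, hence $\Re F_{1} \geq -a$ throughout, so $|e^{-F_{1}(z)}| \leq e^{a}$; moreover on $\R$, $e^{-F_{1}} = e^{-u}e^{-i\widetilde{u}}$, which carries exactly the unimodular factor $e^{-i\widetilde{u}}$ appearing in $\overline{h}/h$. Accordingly, set $\phi := C\,e^{-F_{1}} \in \mathbf{H}^{\infty}$ with $C > 0$ to be chosen. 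On $\R$,
\[
\overline{h}/h - \phi \;=\; e^{-i\widetilde{u}}\bigl(e^{iv} - C e^{-u}\bigr), \qquad |\overline{h}/h - \phi|^{2} \;=\; (Ce^{-u} - \cos v)^{2} + \sin^{2} v.
\]
Maximizing the right-hand side over $|u|\leq a$, $|v| \leq b$ is a one-variable exercise: the worst $v$ satisfies $\cos v = \cos b$, reducing to $\sup_{t \in [Ce^{-a}, Ce^{a}]}\bigl\{(t - \cos b)^{2} + \sin^{2} b\bigr\}$, whose minimum over $C$ is attained when the two endpoints are symmetric about $\cos b$, i.e.\ $C = \cos b / \cosh a$. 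The resulting value equals $1 - \cos^{2} b/\cosh^{2} a$, which is bounded above by $1 - \cos^{2} b/e^{2a}$ since $\cosh a \leq e^{a}$ for $a \geq 0$.

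Taking the infimum over all admissible decompositions $(u,v)$ of $w$ yields
\[
\textrm{dist}^{2}_{L^{\infty}(\R)}(\overline{h}/h, \mathbf{H}^{\infty}) \;\leq\; 1 - \sup_{(u,v)} \frac{\cos^{2}\|v\|_{\infty}}{e^{2\|u\|_{\infty}^{0}}} \;=\; 1 - [w]_{A_{2}(HS)}^{-2},
\]
which is equivalent to the statement of the lemma. The only conceptual step is the construction of the bounded analytic function $\phi = C e^{-F_{1}}$ from the analytic extension of $u$; the rest is a routine optimization. Incidentally, the sharper bound $1 - \cos^{2}b/\cosh^{2}a$ obtained along the way reflects a small slack in the definition of $[w]_{A_{2}(HS)}$ and could be used to tighten the paper's formulation if desired.
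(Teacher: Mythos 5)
Your proof is correct and follows essentially the same route as the paper: you test against the same family of $\mathbf{H}^{\infty}$ competitors $C\,e^{-(u+i\widetilde{u})}$ and carry out the same pointwise computation $(Ce^{-u}-\cos v)^2+\sin^2 v$, differing only in that you optimize the constant $C$ exactly (obtaining the marginally sharper $1-\cos^2\|v\|_\infty/\cosh^2\|u\|_\infty^0$ before relaxing $\cosh$ to the exponential) where the paper makes a slightly cruder but sufficient choice of $r$. The only cosmetic slip is that $h=\exp((F_1-iF_2)/2)$ holds up to a unimodular (not positive) constant, which is harmless since the distance to $\mathbf{H}^{\infty}$ is invariant under multiplying both $\overline{h}/h$ and the competitor by that constant.
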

\begin{proof}
Put $\phi=\overline{h}/{h}=e^{-i\:\textrm{arg}(h^2)}$.
Condition (\ref{hs decomp}) implies that $\log |h|^2=u+\widetilde{v}$ and so $\widetilde{\log} |h|^2=\widetilde{u}-v+a$ for some constant $a$. Consequently
\[
\phi =e^{-i(\widetilde{u}-v+a)}.
\]
Fix $c\in\R$. For $r>0$ let
$
g_r=r e^{-(u+i\widetilde{u}+ia-c)}.
$
Then $g_r\in \mathbf{H}^{\infty}$ and on $\R$ we have
\begin{align*}
|\phi -g_r|^2&= |1 -\overline{\phi}g_r|=(1 -r e^{ -(u-c)}\cos{ v})^2+(r e^{- (u-c)}\sin{ v})^2.
\end{align*}
Hence, keeping in mind that $\|v\|_{\infty}<\pi/2$, we get
\begin{align*}
1-|\phi -g_r|^2&=2r e^{-(u-c)}\cos{v}-r^2 e^{-2(u-c)}\\&\geq r e^{-(u-c)} (2\cos{\|v\|_{\infty}}-r e^{-(u-c)}).
\end{align*}
Choosing $r=e^{-\|u-c\|_{\infty}}\cos\|v\|_{\infty}$, and noting that $e^{-\|u-c\|_{\infty}-(u-c)}<1$, completes the proof.
\end{proof}

The discussion above implies that both $[w]_{A_2}$ and $[w]_{A_2(HS)}$ provide bounds for the norm of the Riesz projection. It is not difficult to show that $[w]_{A_2}\leq [w]_{A_2(HS)}^2$ (see \cite{CaNaSoCa}, pp 2-3) but the precise relation between these quantities is not known.

\section{Lower bound estimates in some Riesz basis characterizations}

In this section we estimate the lower Riesz basis bound in some Riesz bases characterizations from \cite{Pav, HNP, SeipLiubAp}.

\subsection{Riesz bases bounds via generating functions}\label{charact-generat}

 The following are well known considerations: Let $\L\subseteq \R$ and assume that  $E(-\L)$ is a Riesz basis in $L^2[0,1]$. Let $\{g_{\lambda}\}$ be the unique dual system described in Section \ref{section abstract setting}. For $\lambda\in\L$, let $G_{\lambda}$ be the inverse Fourier transform of $g_{\lambda}$. Then $G_{\lambda}$ is equal zero on $\L\setminus \{\l\}$ and it does not have any other zero in $\mathbb{C}$. Indeed, if $w$ were an additional zero then by the Paley-Wiener theorem, the function
 \[
 G_{\lambda}(z)\frac{z-\l}{z-w}
 \]
  would have been the inverse Fourier transform of a function in $L^2[0,1]$ which is orthogonal to $E(-\Lambda)$, contradicting the completeness property of the system.

  In a similar way one can show that if $\l_1\in\L$ then
\[
G_{\lambda_1}(z)=c(\l_1,\l)G_{\lambda}(z)\frac{z-\l}{z-\l_1},
\]
for some constant $c(\l_1,\l)\in \mathbb{C}$. By the Paley-Wiener theorem, this implies that if $g_{\l}$ is supported on a subinterval of $[0,1]$ then the same is true for all $g_{\l_1}$, $\l_1\in\L$ (see e.g. \cite{Koosis} pp 132-133). As the identity (\ref{dual riesz}) holds for all $f\in L^2[0,1]$ this leads to a contradiction. We may therefore conclude that the functions $g_{\l}$ have full support on the interval $[0,1]$.

For $\l\in\L$ consider the function $F=(z-\l)G_{\l}$. Then $F^2\in L^2_{\Pi}(R)$ and the zero set of $F$ is $\L$. The Paley-Wiener theorem implies that $F$ is of exponential type $2\pi$ and moreover, since $g_{\l}$ has full support on the interval $[0,1]$, that
\begin{equation}\label{eq;full diagram}
\lim\sup_{t\rightarrow \infty}\frac{\log{|F(it)|}}{t}=0\qquad\textrm{and}\qquad
\lim\sup_{t\rightarrow \infty}\frac{\log{|F(-it)|}}{t}=2\pi.
\end{equation}
(See e.g. \cite{Koosis} pp 132-133). In particular, it follows that $F$ is in the Cartwright class and so the following product converges
\begin{equation}\label{eq;product}
\lim_{R\rightarrow \infty}\prod_{\l\in\L,\:|\lambda|<R}\big(1-\frac{z}{\lambda}\big),
\end{equation}
(where if $0\in\L$ then the corresponding term in the product is replaced by $z$). Moreover, up to a multiplicative constant $F$ is equal to
\[
F_{\L}(z):=e^{\pi i z}\lim_{R\rightarrow \infty}\prod_{\l\in\L,\:|\lambda|<R}\big(1-\frac{z}{\lambda}\big),
\]
(see e.g. \cite{HNP} pp 284-285).

We say that a uniformly discrete sequence $\L\subset\R$ admits an \textit{appropriate generator} if the product (\ref{eq;product}) converges and the corresponding function $F_{\L}$ is of exponential type $2\pi$, satisfies (\ref{eq;full diagram}), and $F_{\L}\in L^2_{\Pi}(\R)$. The discussion above implies that if $E(\L)$ is a Riesz basis in $L^2[0,1]$ then $\L$ admits an appropriate generator.

Next, assume that $\L\subseteq\R$ is uniformly discrete and that it admits an appropriate generator. Fix $y>0$ and denote
\begin{equation}\label{eq; F after translate}
F_{\L-iy}(z):=e^{\pi i z}\lim_{\R\rightarrow \infty}\prod_{\l\in\L,\:|\lambda|<R}\big(1-\frac{z}{\lambda-iy}\big).
\end{equation}
Since the product (\ref{eq;product}) converges the product defining $F_{(\L-iy)}$ converges as well and we have
\begin{equation}\label{eq; F-lambda identity}
F_{\L-iy}(z)F_{\L}(iy)=F_{\L}(z+iy).
\end{equation}
In particular, this identity implies that $F_{\L-iy}$ is of exponential type $2\pi$, satisfies (\ref{eq;full diagram}), and that $F_{\L-iy}\in L^2_{\Pi}(\R)$ (see e.g. \cite{Youngbook} p79). It follows that when restricted to the upper half plane $F_{\Lambda-iy}$ admits an inner-outer factorization. As it is analytic on the whole plane and has no zeros in the upper half plane, condition  (\ref{eq;full diagram}) implies that $F_{\Lambda-iy}$ is outer. 

Further, observe that if a uniformly discrete $\L$ admits an appropriate generator then, as the product in (\ref{eq; F after translate}) converges, the product
\[
\lim_{R\rightarrow\infty}\prod_{|\l|<R}\frac{1-\frac{z}{\l+iy}}{1-\frac{z}{\l-iy}}
\]
converges as well, and so is equal up to a unimodular multiplicative constant to 
$
B_{\L_y}(z)
$.
Consequently, under these conditions, there exists a unimodular constant $\varepsilon$ so that the outer function $h=\varepsilon F_{\Lambda-iy}$ satisfies
\begin{equation}\label{the h condition}
\overline{S}(x)B_{\Lambda_y}(x)= \frac{\overline{h(x)}}{h(x)} \quad x\in\R.
\end{equation}

 Combined with the discussion in the previous section this allows us to reproduce Pavlov's characterization of exponential Riesz bases and to provide an estimate for the corresponding lower Riesz basis bound.

\begin{theorem}\label{thm; HNP generating}
Let $\Lambda\subseteq\R$ be uniformly discrete. The exponential system $E(\L)$ is a Riesz basis in $L^2[0,1]$ if and only if  the product (\ref{eq;product}) converges and the corresponding function $F_{\L}$ is of exponential type $2\pi$, satisfies (\ref{eq;full diagram}), and for some $y>0$ we have
\begin{equation}\label{w is a2}
w_{\Lambda}^{(y)}(x):=|F_{\Lambda}(x+iy)|^2\in A_2(\R).
\end{equation}
Moreover, in this case (\ref{w is a2}) holds for all $y>0$ and the lower Riesz basis bound satisfies both estimates:
\begin{equation}\label{hs rb bound}
A(\Lambda)\geq \sup_y \frac{1}{7\delta(\Lambda)}e^{-{8\pi y}/{\delta(\Lambda)}}[w_{\Lambda}^{(y)}]^{-2}_{A_2(HS)}
\end{equation}
and
\begin{equation}\label{a2 rb bound}
A(\Lambda)\geq \sup_y \frac{C}{\delta(\Lambda)}e^{-{8\pi y}/{\delta(\Lambda)}}[w_{\Lambda}^{(y)}]^{-2}_{A_2},
\end{equation}
where $C>0$ is a universal constant.
\end{theorem}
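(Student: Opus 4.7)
The plan is to reduce the statement to Proposition~\ref{thm; exp rb and projections} (equivalently its modal form Proposition~\ref{thm; exp rb modal}) combined with Theorem~\ref{rb as riesz}, and then to feed in the Muckenhoupt/Petermichl bound (\ref{a2 bound}) and the refined Helson--Szeg\"{o} estimate (\ref{hs bound -2}). The three ingredients are already available in the paper; the proof is essentially an assembly.

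For \emph{necessity of the generator conditions}, I note that $E(\Lambda)$ and $E(-\Lambda)$ are Riesz bases simultaneously, so the generating-function discussion preceding the theorem (examining the unique biorthogonal system via Paley--Wiener) shows that whenever $E(\Lambda)$ is a Riesz basis, $\Lambda$ admits an appropriate generator: the product (\ref{eq;product}) converges, $F_{\Lambda}$ is of exponential type $2\pi$, satisfies the full-diagram condition (\ref{eq;full diagram}), and lies in $L^2_{\Pi}(\R)$.

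Next, for the \emph{Riesz projection realization}, I fix $y>0$ and, under the generator conditions, invoke the derivation of (\ref{the h condition}) to produce a unimodular constant $\varepsilon$ such that $h:=\varepsilon F_{\Lambda-iy}$ is outer in the upper half plane, belongs to $L^2_{\Pi}(\R)$, and satisfies $\overline{S}(x)B_{\Lambda_y}(x)=\overline{h(x)}/h(x)$ on $\R$. Theorem~\ref{rb as riesz} applied with $I=S$ and $\Theta=B_{\Lambda_y}$ then yields
\[
\bigl\|(P_S|K_{B_{\Lambda_y}})^{-1}\bigr\|=\|R\|_w,\qquad w:=|h|^2=|F_{\Lambda-iy}(x)|^2,
\]
and shows that $P_S|K_{B_{\Lambda_y}}$ is an isomorphism if and only if $R$ is bounded on $L^2_w(\R)$. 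From (\ref{eq; F-lambda identity}) evaluated on the real axis, $w_{\Lambda}^{(y)}(x)=|F_{\Lambda}(iy)|^2\,w(x)$, so the two weights differ only by a positive multiplicative constant; this constant drops out of the $A_2$ and $A_2(HS)$ characteristics, so $w\in A_2(\R)$ iff $w_{\Lambda}^{(y)}\in A_2(\R)$, with equal characteristics in either characterization.

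Combining these ingredients with the Muckenhoupt--Hunt--Wheeden condition (\ref{A_2 condition}) and Proposition~\ref{thm; exp rb modal} proves the equivalence in the theorem. The ``for all $y>0$'' clause follows because, once $E(\Lambda)$ is known to be a Riesz basis, Proposition~\ref{thm; exp rb modal} gives that $P_S|K_{B_{\Lambda_y}}$ is an isomorphism for every $y>0$, so the preceding paragraph produces $w_{\Lambda}^{(y)}\in A_2(\R)$ for every $y>0$. To obtain the quantitative bounds I would substitute Petermichl's estimate $\|R\|_w\leq C[w]_{A_2}$ and the sharpened Helson--Szeg\"{o} bound $\|R\|_w\leq [w]_{A_2(HS)}$ into the operator-norm identity above, pass from $[w]$ to $[w_{\Lambda}^{(y)}]$ using scale invariance, and insert the resulting lower bounds for $\|(P_S|K_{B_{\Lambda_y}})^{-1}\|^{-2}$ into (\ref{eq: riesz bound estimate modal}); this produces (\ref{hs rb bound}) and (\ref{a2 rb bound}). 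The main technical point is the Riesz projection realization step, namely verifying that the outer function whose modulus squared is (a constant multiple of) $w_{\Lambda}^{(y)}$ also implements the factorization $\overline{S}B_{\Lambda_y}=\overline{h}/h$ demanded by Theorem~\ref{rb as riesz}; this matching has already been prepared in the generating-function discussion above, so the remainder of the proof is bookkeeping with inequalities established earlier in the paper.
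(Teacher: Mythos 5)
Your proposal is correct and follows essentially the same route as the paper: reduce to Proposition \ref{thm; exp rb modal} via Theorem \ref{rb as riesz} with $h=\varepsilon F_{\Lambda-iy}$ satisfying (\ref{the h condition}), identify $w_{\Lambda}^{(y)}$ with a constant multiple of $|h|^2$ through (\ref{eq; F-lambda identity}), invoke the Hunt--Muckenhoupt--Wheeden characterization, and insert (\ref{a2 bound}) and (\ref{hs bound -2}) to get the quantitative bounds. The only detail you leave implicit is the appeal to Lemma \ref{a_2 poisson} in the converse direction to ensure $w_{\Lambda}^{(y)}\in L^1_{\Pi}(\R)$ (hence $h\in L^2_{\Pi}(\R)$), which the paper states explicitly.
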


\begin{proof}
Assume first that $E(\L)$ is a Riesz basis. We have already seen that in this case $\L$ admits an appropriate generator and that the outer function $h=\varepsilon F_{\Lambda-iy}$ satisfies (\ref{the h condition}). The identity (\ref{eq; F-lambda identity}) implies that $w_{\Lambda}^{(y)}=C|h|^2$ for some constant $C>0$.  Proposition  \ref{thm; exp rb and projections}', Theorem \ref{rb as riesz}, and the Hunt, Muckenhoupt, and Wheeden characterization, now ensure that $w_{\Lambda}^{(y)}(x)\in A_2(\R)$. The Riesz basis bounds follow by inserting (\ref{a2 bound}) and (\ref{hs bound -2}) into Proposition \ref{thm; exp rb and projections}'.

In the opposite direction, assume that $F_{\Lambda}$ as required exists. Lemma \ref{a_2 poisson} ensures that $w_{\Lambda}^{(y)}\in L^1_{\Pi}(\R)$. Similar considerations as above imply that there exists an outer function $h$ so that (\ref{the h condition}) holds and $w_{\Lambda}^{(y)}=C|h|^2$. Proposition  \ref{thm; exp rb and projections}', Theorem \ref{rb as riesz}, and the Hunt, Muckenhoupt, and Wheeden characterization, may now be applied to conclude that $E(\Lambda)$ is a Riesz basis in $L^2[0,1]$.

\end{proof}

Let $\L\subseteq\R$ be uniformly discrete and assume that the product (\ref{eq;product}) converges to some function $H_{\L}(z)=e^{-\pi i z}F_{\L}(z)$. Denote
\[
G_{\Lambda}(x):=\frac{H_{\L}(x)}{\textrm{dist}(x,\Lambda)}\qquad x\in\R.
\]
Seip and Lyubarskii observed in \cite{SeipLiubAp} that for $y>0$ the functions $|H_{\Lambda}(x+iy)|$ and $|G_{\Lambda}(x)|$ are equivalent, and so the latter may replace the former in Theorem \ref{thm; HNP generating}. The following lemma gives an explicit estimate for this equivalence (see also \cite{HNP}, lemma 1.5, p285). 

\begin{lemma}\label{s like p}
Let $\L\subseteq\R$ and $H_{\Lambda}$ be as above. Then for every $y>0$ there exists a function $\nu_y\in L^{\infty}$ with
\[
\|\nu_y\|_{\infty}\leq \frac{2\pi y(\delta^2(\L)+4y^2)}{\delta^3(\L)}.
\]
so that
\[
\frac{|H_{\Lambda}(x)|^2}{|H_{\Lambda}(x+iy)|^2}=e^{\nu_y}\frac{\textrm{dist}^2(x,\Lambda)}{\textrm{dist}^2(x+iy,\Lambda)}.
\]
\end{lemma}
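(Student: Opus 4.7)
The plan is to compute both sides of the claimed identity directly from the product formula. Using $H_\L(z)=\lim_{R\to\infty}\prod_{|\lambda|<R}(1-z/\lambda)$ one obtains, for $z=x+iy$,
\[
\frac{|H_\L(x)|^2}{|H_\L(x+iy)|^2}=\prod_{\lambda\in\L}\frac{(\lambda-x)^2}{(\lambda-x)^2+y^2}.
\]
Let $\lambda_0=\lambda_0(x)\in\L$ realize $\textrm{dist}(x,\L)$; since $|x+iy-\lambda|^2=(\lambda-x)^2+y^2$, the same $\lambda_0$ realizes $\textrm{dist}(x+iy,\L)$, with $\textrm{dist}^2(x,\L)=(\lambda_0-x)^2$ and $\textrm{dist}^2(x+iy,\L)=(\lambda_0-x)^2+y^2$. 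Extracting the $\lambda_0$-factor isolates the announced ratio and gives
\[
e^{\nu_y(x)}=\prod_{\lambda\ne\lambda_0}\frac{(\lambda-x)^2}{(\lambda-x)^2+y^2}\in(0,1],
\]
so $\nu_y(x)\le 0$ and $|\nu_y(x)|=\sum_{\lambda\ne\lambda_0}\log\bigl(1+y^2/(\lambda-x)^2\bigr)$.

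The main task is then a uniform estimate of this sum in $x$. I plan to apply $\log(1+u)\le u$ to reduce it to $\sum_{\lambda\ne\lambda_0}y^2/(\lambda-x)^2$. The crucial geometric observation is that, since $\lambda_0$ is the closest point of $\L$ to $x$, one has $|\lambda-x|\ge|\lambda_0-x|$ for every $\lambda\in\L$, and the triangle inequality then yields $|\lambda-\lambda_0|\le|\lambda-x|+|x-\lambda_0|\le 2|\lambda-x|$. Hence $|\lambda-x|\ge|\lambda-\lambda_0|/2$ for all $\lambda\ne\lambda_0$, uniformly in $x$. Because $\L$ is $\delta$-separated and $\lambda_0\in\L$, the distances $|\lambda-\lambda_0|$ along each side of $\lambda_0$ are at least $\delta,2\delta,3\delta,\ldots$, so $\sum_{\lambda\ne\lambda_0}(\lambda-\lambda_0)^{-2}\le 2\sum_{k\ge 1}(k\delta)^{-2}=\pi^2/(3\delta^2)$. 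Putting these steps together yields
\[
|\nu_y(x)|\le\frac{4\pi^2 y^2}{3\delta^2}.
\]

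It then remains to check that this implies the stated bound $\frac{2\pi y(\delta^2+4y^2)}{\delta^3}$. After clearing denominators this reduces to $2\pi y\delta\le 3\delta^2+12y^2$, which is immediate from AM--GM since $3\delta^2+12y^2\ge 12\delta y>2\pi\delta y$. The main obstacle in this approach is really the geometric step: one must be sure the inequality $|\lambda-x|\ge|\lambda-\lambda_0|/2$ holds in every configuration, whether $x$ sits inside a small gap of $\L$ or is far from every point of $\L$. The argument above handles both possibilities simultaneously, which is precisely what makes the final estimate global and uniform in $x$, and hence gives $\nu_y\in L^\infty$ with the required norm bound.
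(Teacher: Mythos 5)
Your proof is correct and follows essentially the same route as the paper's: extract the factor indexed by the nearest point $\lambda_0$, observe that $|\lambda-x|\ge|\lambda-\lambda_0|/2\ge\delta/2$ for $\lambda\ne\lambda_0$, and bound the resulting logarithmic sum. Your use of $\log(1+u)\le u$ together with the termwise comparison $\sum_{\lambda\ne\lambda_0}|\lambda-\lambda_0|^{-2}\le \pi^2/(3\delta^2)$ in fact yields the sharper intermediate bound $4\pi^2y^2/(3\delta^2)$, which, as you verify by AM--GM, implies the bound stated in the lemma.
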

\begin{proof}
Put $\delta=\delta(\L)$ and let $\tau=\delta/2$. Fix $x\in\R$ and let $\l_0\in\Lambda$ be a point satisfying $|x-\l_0|=\textrm{dist}(x,\Lambda)$. Note that $\textrm{dist}(x,\Lambda\setminus\{\l_0\})\geq\tau$. We have
\[
\begin{aligned}
\frac{|H_{\Lambda}(x)|^2}{|H_{\Lambda}(x+iy)|^2}&=\prod\frac{|x-\lambda|^2}{|x-\lambda|^2+y^2}
=\frac{\textrm{dist}^2(x,\Lambda)}{\textrm{dist}^2(x+iy,\Lambda)}\prod_{\lambda\neq\lambda_0}\Big(1-\frac{y^2}{|\lambda-x|^2+y^2}\Big).
\end{aligned}
\]

Denote
\[
\nu_y(x):=\sum_{\lambda\neq\lambda_0}\log{\Big(1-\frac{y^2}{|\lambda-x|^2+y^2}\Big)}.
\]
Note that if $0<t\leq r<1$ then $-t/(1-r)<\log(1-t)$. Putting $r=y^2/(\tau^2+y^2)$ (and recalling that if $\lambda\neq \lambda_0$ then $|\lambda-x|\geq \tau$) we get
\[
\begin{aligned}
0&\geq \nu_y(x)\geq -\big(1+\frac{y^2}{\tau^2}\big)\sum_{\lambda\neq\lambda_0}\frac{y^2}{|\lambda-x|^2+y^2}.
\end{aligned}
\]
Since $\Lambda$ is separated by $\delta=2\tau$ and $|\lambda-x|\geq \tau$ for $\lambda\neq \lambda_0$, we have
\[
\begin{aligned}
\sum_{\lambda\neq\lambda_0}\frac{y^2}{|\lambda-x|^2+y^2}&\leq 2\sum_{k=1}\frac{y^2}{(\tau k)^2+y^2}\leq \frac{y}{\tau}\int_{\R}\frac{dx}{x^2+1}= \frac{\pi y}{\tau}\\
\end{aligned}
\]
\end{proof}

Choosing $y=\delta(\Lambda)$ in Theorem \ref{thm; HNP generating} and Lemma \ref{s like p} we obtain the following version of  Theorem \ref{thm; HNP generating}.

\begin{thmmbis}{thm; HNP generating} \label{thm seip lyub}
Let $\Lambda\subseteq\R$ be uniformly discrete. The exponential system $E(\L)$ is a Riesz basis in $L^2[0,1]$ if and only if the product (\ref{eq;product}) converges to a function with of exponential type $2\pi$, condition (\ref{eq;full diagram}) holds, and $|G_{\Lambda}|^2\in A_2(\R)$.
Moreover, in this case the lower Riesz basis bound satisfies both estimates:
\[
A(\Lambda)\geq  e^{-20\pi}\frac{\delta(\Lambda)}{m^2(\Lambda)}[|G_{\Lambda}|^2]^{-2}_{A_2(HS)},\quad\textrm{and}\quad A(\Lambda)\geq  C\frac{\delta(\Lambda)}{m^2(\Lambda)}[|G_{\Lambda}|^2]^{-2}_{A_2},
\]
where $m(\Lambda)$ denotes the maximal distance between two consecutive elements in $\Lambda$ and $C>0$ is a universal constant.
\end{thmmbis}

\begin{remark}\label{dif in bmo}
Lemma \ref{s like p} implies in particular that
\[
\log{|F_{\Lambda}(x)|^2}-\log{|F_{\Lambda}(x+iy)|^2}\in \textrm{BMO}(\R).
\]
Indeed, on an interval of the form $(\l_n-\tau, \l_n+\tau)$ we have
\[
\frac{\textrm{dist}^2(x,\Lambda)}{\textrm{dist}^2(x+iy,\Lambda)}=\frac{|x-\l_n|^2}{|x-\l_n|^2+y^2}
\]
while outside the union of all such intervals
\[
\frac{\tau^2}{\tau^2+y^2}\leq \frac{\textrm{dist}^2(x,\Lambda)}{\textrm{dist}^2(x+iy,\Lambda)}\leq 1.
\]
As $\log |x|\in \textrm{BMO}(\R)$ the conclusion follows (this is easiest seen via the equivalent characterization for BMO, see \cite{Garbook} pp 222-223, we omit the details).
\end{remark}

\subsection{Riesz bases bounds via phase functions}

For $w\in\mathbb{C}\setminus\R$ we have
\[
\log{(1-\frac{x}{w})}= \int_0^x\frac{1}{t-w}\:dt.
\]
By taking the imaginary part we find that
\begin{equation}\label{arg}
\textrm{arg}\Big(1-\frac{x}{\lambda\pm iy}\Big)=\pm \int_0^x\frac{y}{(t-\lambda)^2+y^2}\: dt.
\end{equation}

Given a uniformly discrete sequence $\L\subseteq\R$ and $y>0$ denote
\[
\alpha_{\L_y}(x)=2\int_0^x\sum\frac{y}{(t-\lambda)^2+y^2}\: dt -2\pi x.
\]
The relation (\ref{arg}) implies that up to an additive constant, $\alpha_{\L_y}$ is a continuous branch of the argument of $\overline{S}B_{\L_y}$ over $\R$, and that if $\L$ admits an appropriate generator then $-\alpha_{\L_y}/2$ is a similar branch of the argument of $F_{\Lambda-iy}$.

With this we can reproduce the characterization of exponential Riesz bases via the phase function formulated in \cite{HNP}, and provide an estimate for the corresponding lower Riesz basis bound.

%Recall the notation $w_{\Lambda}^{(y)}=|F_{\Lambda}(x+iy)|^2$ from the formulation of Theorem \ref{thm; HNP generating}.

\begin{theorem}\label{thm; exp rb and phase}
Let $\Lambda\subseteq\R$ be a uniformly discrete sequence. Then $E(\Lambda)$ is a Riesz basis in $L^2[0,1]$ if and only if there exist, $y>0$, two bounded functions $v_y,u_y\in L^{\infty}(\R)$ with $\|v_y\|_{\infty}<\pi/2$, and a constant $c\in\mathbb{\R}$ such that
\[
\alpha_{\L_y}=v_y+\widetilde{u}_y+c.
\]
Moreover, in this case such a decomposition exists for all $y>0$, and the lower Riesz basis bound satisfies
\begin{equation}\label{eq: riesz bound estimate phase}
A(\Lambda)\geq \sup_y \frac{1}{7\delta(\Lambda)}e^{-(8\pi \frac{y}{\delta(\Lambda)}+  \|u_y\|^0_{\infty})}\cos^2{\|v_y\|_{\infty}}.
\end{equation}
\end{theorem}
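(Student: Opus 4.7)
The plan is to combine Proposition~\ref{thm; exp rb modal} with Theorem~\ref{rb as riesz}, reducing the Riesz basis property to boundedness of a weighted Riesz projection, and then to read off the Helson--Szeg\"o data of that weight directly from the phase function $\alpha_{\L_y}$. The norm estimate (\ref{hs bound -2}) will then convert this into the claimed lower bound for $A(\L)$.

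Fix $y>0$ and suppose first that $E(\L)$ is a Riesz basis in $L^2[0,1]$. By Section~\ref{charact-generat}, $\L$ admits an appropriate generator, and the outer function $h=\varepsilon F_{\L-iy}$ satisfies $\overline{S}(x)B_{\L_y}(x)=\overline{h(x)}/h(x)$ on $\R$. Theorem~\ref{rb as riesz} identifies $\|(P_S|K_{B_{\L_y}})^{-1}\|$ with $\|R\|_w$ where $w:=|h|^2$, and by the Hunt--Muckenhoupt--Wheeden and Helson--Szeg\"o theorems we may write $\log w=u_w+\widetilde v_w$ with $u_w,v_w\in L^\infty(\R)$ and $\|v_w\|_\infty<\pi/2$. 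The decisive point is that this $L^\infty$-decomposition of $\log w$ is equivalent to an $L^\infty$-decomposition of $\alpha_{\L_y}$ of the required form. Indeed $\overline{h}/h=e^{-2i\arg h}$, and by the discussion preceding the theorem $\alpha_{\L_y}$ is, up to an additive constant, a continuous branch of $\arg(\overline{S}B_{\L_y})$ on $\R$; hence $\alpha_{\L_y}=-2\arg h+\text{const}$. Since $h$ is outer in the upper half plane, $\arg h$ equals the Hilbert transform of $\log|h|$ modulo a constant, so applying $\widetilde{\;\cdot\;}$ to $\log w=u_w+\widetilde v_w$ and using $\widetilde{\widetilde{f}}=-f+\text{const}$ produces $\alpha_{\L_y}=v_w-\widetilde u_w+\text{const}$. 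Setting $u_y=-u_w$ and $v_y=v_w$ gives the forward direction.

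The estimate (\ref{eq: riesz bound estimate phase}) then follows by substituting the bound $\|R\|_w\le[w]_{A_2(HS)}\le e^{\|u_y\|_\infty^0}/\cos\|v_y\|_\infty$, provided by (\ref{hs bound -2}) together with the Helson--Szeg\"o decomposition just produced, into the lower bound (\ref{eq: riesz bound estimate modal}) from Proposition~\ref{thm; exp rb modal}, and taking the supremum over $y>0$. The converse direction reverses the chain: from a decomposition $\alpha_{\L_y}=v_y+\widetilde u_y+c$ one reads off a Helson--Szeg\"o decomposition of a weight $w=|h|^2$ for an outer $h$ solving $\overline{S}B_{\L_y}=\overline{h}/h$; boundedness of $R$ on $L^2_w(\R)$ via (\ref{hs bound}) combined with Theorem~\ref{rb as riesz} and Proposition~\ref{thm; exp rb modal} then forces $E(\L)$ to be a Riesz basis. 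The principal technical delicacy will be the careful bookkeeping of additive constants in the passage between $\arg(\overline{S}B_{\L_y})$, $\alpha_{\L_y}$, $\arg h$, and $\widetilde{\log|h|}$, and in particular checking that the explicit linear term $-2\pi x$ subtracted in the definition of $\alpha_{\L_y}$ is precisely the contribution of the inner function $\overline{S}(x)=e^{-2\pi ix}$, so that what remains corresponds to $-2\arg h$ on the nose.
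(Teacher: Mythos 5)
Your argument is correct and follows essentially the same route as the paper: reduce to Theorem \ref{rb as riesz} via Proposition \ref{thm; exp rb modal}, convert the Helson--Szeg\"o decomposition of the weight $w=|h|^2$ (with $h=\varepsilon F_{\Lambda-iy}$) into the decomposition of $\alpha_{\Lambda_y}$ by taking a Hilbert transform --- the paper phrases this through Theorem \ref{thm; HNP generating} and the identity $\widetilde{\log}{|F_{\Lambda-iy}|^2}=-\alpha_{\Lambda_y}+\mathrm{Const}$, which is exactly your ``$\arg h=\widetilde{\log|h|}$ for outer $h$'' step --- and, in the converse, build the outer $h$ from $\log|h_1|^2=\widetilde{v}_y-u_y$ and a unimodular adjustment. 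One remark: substituting $\|R\|_w\le e^{\|u_y\|_{\infty}^{0}}/\cos\|v_y\|_{\infty}$ into (\ref{eq: riesz bound estimate modal}) yields the exponent $-2\|u_y\|_{\infty}^{0}$ rather than the $-\|u_y\|_{\infty}^{0}$ displayed in (\ref{eq: riesz bound estimate phase}), but the paper's own proof produces the same factor, so your bookkeeping agrees with theirs.
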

\begin{proof}
Assume first that $E(\L)$ is a Riesz basis in $L^2[0,1]$ and fix $y>0$.  Theorem \ref{thm; HNP generating} and the HS characterization imply that there exist $v_y,u_y\in L^{\infty}(\R)$ with $\|v_y\|_{\infty}<\pi/2$ so that
\[
\log{ |F_{\Lambda }(x+iy)|^2} = -u_y(x)+\widetilde{v}_y(x).
\]
The identity (\ref{eq; F-lambda identity}) now implies that
\[
\log{ |F_{\Lambda -iy }(x)|^2} = -u_y(x)+\widetilde{v}_y(x) +\textrm{Const}.
\]
We have seen that up to an additive constant the argument of $F_{\Lambda -iy}$ is equal to $-\alpha_{\Lambda_y}/2$, and so we have
\begin{equation}\label{logF is elpha}
\widetilde{\log}{ |F_{\Lambda -iy}(x)|^2} = -\alpha_{\Lambda}(x)+\textrm{Const}.
\end{equation}
Combined, the last two identities give the required decomposition.

In the opposite direction assume that $\alpha_{\L_y}=v_y+\widetilde{u}_y+c$ and recall that up to an additive constant $\alpha_{\L_y}$ is a continuous branch of the argument of $\overline{S}B_{\L_y}$ over $\R$. By  Lemma \ref{a_2 poisson} we have $e^{u_y-\widetilde{v}_y+c}\in L^1_{\Pi}(\R)$ and so there exists an outer function $h_1$ which satisfies
\begin{equation}\label{who's h}
{\log}{|h_1|^2}=\widetilde{v}_y-u_y.
\end{equation}
This relation implies that, up to an additive constant, the argument of $h_1$ is equal to  $-\alpha_{\L_y}/2$. Let $h=\epsilon h_1$, where $\epsilon$ is a unimodular constant chosen so that on $\R$ we have $\overline{S}B_{\L_y}=\overline{h}/h$. Observe that $h\in L^2_{\Pi}(\R)$. Proposition  \ref{thm; exp rb and projections}', Theorem \ref{rb as riesz} and the Helson Szego characterization, now ensure that $E(\Lambda)$ is a Riesz basis in $L^2[0,1]$. The estimate for the Riesz basis bound follows from (\ref{hs bound -2}).

 \end{proof}

\subsection{Riesz bases bounds via counting functions}

Let $\L\subseteq\R$ be uniformly discrete. The counting function of $\L$ is defined by
\[
N_{\Lambda}(x):=\left\{\begin{array}{cc}
|\L\cap [0,x]|,& x\geq 0\\
-|\L\cap [x,0)|,& x< 0. \end{array}\right..
\]

If the generating function $F_{\Lambda}$ is defined then there exists an intimate relation between it and the counting function. This is formulated in the next lemma, which can be found in \cite{HNP}. We add a proof for completeness.
\begin{lemma}\label{lemma; N as boundry limit of F}
Let $\L\subseteq\R$ and assume that it admits an appropriate generator, in the sense defined in Section \ref{charact-generat}. Then, when restricted to the upper half plane $F_{\Lambda}$ is an outer function and
\[
\lim_{y\rightarrow 0}\textrm{Im}(\log{F_{\Lambda}^2(x+iy)})=2\pi x-N_{\Lambda}(x) +\textrm{Const},
\]
\end{lemma}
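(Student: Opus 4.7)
The plan is to argue the outer property and the boundary-value formula separately. For the outer claim, the reasoning is identical to the one used earlier in Section~\ref{charact-generat} for $F_{\Lambda-iy}$: $F_\Lambda$ is entire of exponential type $2\pi$, lies in $L^2_\Pi(\R)$, has all of its zeros on $\R$, and satisfies the growth condition $\limsup_{t\to\infty}\log|F_\Lambda(it)|/t=0$ from (\ref{eq;full diagram}). In the Nevanlinna--Smirnov factorization of $F_\Lambda|_{\mathbb{C}^+}$ this forces both the Blaschke factor (trivial, since there are no zeros in $\mathbb{C}^+$) and any singular inner factor to vanish, so $F_\Lambda|_{\mathbb{C}^+}$ is outer.

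For the boundary limit, the first step is to apply identity (\ref{eq; F-lambda identity}), $F_\Lambda(z+iy) = F_\Lambda(iy)\cdot F_{\Lambda-iy}(z)$, and take imaginary parts of logarithms. This gives
\[
\textrm{Im}\log F_\Lambda^2(x+iy) = \textrm{Im}\log F_\Lambda^2(iy) + \textrm{Im}\log F_{\Lambda-iy}^2(x),
\]
where the first summand on the right depends only on $y$ and tends to a finite constant as $y\to 0^+$. Applying (\ref{arg}) with the lower sign converts the second summand into
\[
\textrm{Im}\log F_{\Lambda-iy}^2(x) = 2\pi x - 2\int_0^x\sum_{\lambda\in\Lambda}\frac{y}{(t-\lambda)^2+y^2}\,dt,
\]
which (as noted at the start of the next subsection) is precisely $-\alpha_{\Lambda_y}(x)$.

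It then remains to compute $\lim_{y\to 0^+}\int_0^x\sum_\lambda y/((t-\lambda)^2+y^2)\,dt$. Exchanging sum and integral, each term equals $\arctan((x-\lambda)/y)+\arctan(\lambda/y)$, and a case analysis on the signs of $\lambda$ and $x-\lambda$ shows that in the limit this expression contributes $\pm\pi$ exactly when $\lambda$ lies between $0$ and $x$ and $0$ otherwise, so the whole sum tends to $\pi N_\Lambda(x)$. I expect the main technical obstacle to be the justification of interchanging the limit $y\to 0^+$ with the infinite sum; for this I would observe that for $|\lambda|$ large the summand is of order $|x|y/(\lambda^2+y^2)$, so the uniform discreteness of $\Lambda$ delivers a tail bound that is summable uniformly in $y$, legitimising the term-by-term passage to the limit. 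Putting the pieces together yields the asserted limit, up to an additive constant absorbed into $\textrm{Const}$.
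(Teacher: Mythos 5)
Your proposal is correct and is essentially the paper's argument: both rest on the convergent product representation of $F_{\Lambda}$ and a term-by-term computation of the limiting argument of each factor $1-z/\lambda$ (your detour through the identity $F_{\Lambda}(z+iy)=F_{\Lambda}(iy)F_{\Lambda-iy}(z)$ and the phase function $\alpha_{\Lambda_y}$ is just a repackaging of the paper's direct evaluation of $\lim_{y\to 0}\arg\bigl(1-\tfrac{x+iy}{\lambda_n}\bigr)$ as $-\pi$ times an indicator). Your explicit tail bound justifying the interchange of the limit $y\to 0^+$ with the infinite sum is a point the paper passes over silently, and is a worthwhile addition.
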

\begin{proof}
As it is an appropriate generator, $F_{\L}$ admits an inner-outer factorization in the upper half plane. Since it is analytic on the whole plane and it has no zeroes in the upper half plane, condition  (\ref{eq;full diagram}) implies that $F_{\Lambda}$ is outer. We therefore have
\[
\log{F_{\Lambda}^2(z)}=2\pi i z+ 2\sum_n\log(1-\frac{z}{\lambda_n})\quad z\in \mathbb{C}^+.
\]
Observe that
\[
\lim_{y\rightarrow 0}\log(1-\frac{x+iy}{\lambda_n})=\log\big|1-\frac{x}{\lambda_n}\big|-\pi i \left\{\begin{array}{cc}
1\!\!1_{[\lambda_n,\infty)},& \lambda_n\geq 0\\
-1\!\!1_{(-\infty, \lambda_n]},& \lambda_n< 0. \end{array}\right..
\]
The statement in the lemma follows.
\end{proof}

A similar intimate relation exists between the counting function and phase function. This essentially follows from the discussion in the previous section and will be made more clear throughout the proof of Theorem \ref{thm; HNP generating and counting}. To formulate it explicitly we will need the following notation. For a Borrel measure $\mu$ which satisfies $\int_{\R} d\mu/(1+x^2)<\infty$ we denote the Poisson extension of $\mu$ by
\[
P_y[\mu](x):=\int_R\frac{y}{(x-t)^2+y^2}d\mu(t).
\]
If $d\mu = fdt$ for some $f\in L^1_{\Pi}(\R)$ we denote for brevity $P_y[f]$. The following observation follows from considering the poisson extension as a convolution and performing an appropriate integration by parts, we omit the details.

\begin{lemma}\label{lemma; Py using measure}
Let $\mu$ be a Borrel measure which satisfies $\int_{\R} d\mu/(1+x^2)<\infty$, and let $f$ be defined by
\begin{equation}\label{antiderivative}
f(x)=\int_0^xd\mu.
\end{equation}
If $f\in L^1_{\Pi}(\R)$ then for every $y>0$ there exists a constant $C(y)$ so that
\begin{equation}\label{poisson identity}
P_y[f](x)=\int_0^xP_y[\mu](\tau)d\tau+ C(y).
\end{equation}
\end{lemma}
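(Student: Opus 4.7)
The natural choice is $C(y):=P_y[f](0)$, so it suffices to establish
\[
P_y[f](x)-P_y[f](0)=\int_0^x P_y[\mu](\tau)\,d\tau \qquad \forall x\in\R.
\]
The plan is to compute both sides explicitly, using Fubini on the right and integration by parts on the left, and show that they equal a common expression of the form $\int_\R \phi_x(t)\,d\mu(t)$ for an explicit antiderivative $\phi_x$ of (minus) the difference of two Poisson kernels.

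First, since the Poisson kernel is non-negative and $\mu$ is positive, Fubini applies directly to the right-hand side and yields
\[
\int_0^x P_y[\mu](\tau)\,d\tau=\int_\R \phi_x(t)\,d\mu(t),\qquad \phi_x(t):=\arctan\Big(\tfrac{x-t}{y}\Big)-\arctan\Big(\tfrac{-t}{y}\Big),
\]
simply by carrying out the inner integral in $\tau$. For the left-hand side, write
\[
P_y[f](x)-P_y[f](0)=\int_\R\Big(\tfrac{y}{(x-t)^2+y^2}-\tfrac{y}{t^2+y^2}\Big)f(t)\,dt=-\int_\R \phi_x'(t)f(t)\,dt,
\]
using the elementary identity $\phi_x'(t)=-\tfrac{y}{(x-t)^2+y^2}+\tfrac{y}{t^2+y^2}$. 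Since $f$ is (locally) of bounded variation with $df=d\mu$, integration by parts on a symmetric interval $[-R,R]$ gives
\[
-\int_{-R}^R\phi_x'(t)f(t)\,dt=-\bigl[\phi_x(t)f(t)\bigr]_{-R}^R+\int_{-R}^R\phi_x(t)\,d\mu(t).
\]
As $R\to\infty$, the integral $\int_{-R}^R\phi_x\,d\mu$ converges to $\int_\R\phi_x\,d\mu$, and $\int_{-R}^R\phi_x'\,f\,dt$ converges to $\int_\R\phi_x'\,f\,dt$, both by dominated convergence: a short computation shows $|\phi_x(t)|,\,|\phi_x'(t)|\lesssim_{x,y} 1/(1+t^2)$, so the hypotheses $\int d\mu/(1+t^2)<\infty$ and $f\in L^1_\Pi(\R)$ provide integrable dominants.

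The main obstacle is to verify that the boundary term $[\phi_x(t)f(t)]_{-R}^R$ vanishes as $R\to\infty$. A Taylor expansion shows that the two arctangents in $\phi_x$ have the same limits at $\pm\infty$ and cancel to second order, giving $\phi_x(t)=O(1/t^2)$ as $|t|\to\infty$. It therefore suffices to show that $|f(R)|=o(R^2)$; in fact we prove the stronger statement $|f(R)|=o(|R|)$. Since $\mu\geq 0$, the function $f$ is monotone nondecreasing on $\R$, so $|f|$ is monotone on each half-line. If there were a sequence $t_n\to+\infty$ with $|f(t_n)|\geq c\,t_n$, then monotonicity would yield $|f(t)|\geq c\,t_n/2$ for $t\in[t_n,2t_n]$, so $\int_{t_n}^{2t_n}|f|/(1+t^2)\,dt\geq c/4$; choosing the $t_n$ along a subsequence with disjoint doubling intervals contradicts $f\in L^1_\Pi(\R)$. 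The same argument works at $-\infty$. Hence $\phi_x(\pm R)f(\pm R)=O(|R|/R^2)\to 0$, the boundary terms vanish, and combining the computations gives the identity with $C(y)=P_y[f](0)$, which is finite since $\tfrac{y}{t^2+y^2}\lesssim_y 1/(1+t^2)$ and $f\in L^1_\Pi(\R)$.
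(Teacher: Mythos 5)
Your argument is correct and is precisely the "consider the Poisson extension as a convolution and integrate by parts" proof that the paper invokes but omits: the explicit antiderivative $\phi_x(t)=\arctan\bigl(\frac{x-t}{y}\bigr)-\arctan\bigl(\frac{-t}{y}\bigr)$, the bounds $|\phi_x|,|\phi_x'|\lesssim_{x,y}(1+t^2)^{-1}$ justifying Fubini and dominated convergence, and the vanishing of the boundary term are all sound, with $C(y)=P_y[f](0)$ finite because $f\in L^1_{\Pi}(\R)$. One caveat worth recording: in the only place the lemma is used (Theorem \ref{thm; HNP generating and counting}) the measure is signed, $d\mu=2\pi\,dx-\sum_{\lambda\in\Lambda}\delta_{\lambda}$, so your monotonicity argument for $|f(R)|=o(R)$ does not apply verbatim; it can be replaced by the bound $|f(R)|\le|\mu|([0,R])\le|\mu|([0,T])+(1+R^2)\int_{|t|\ge T}\frac{d|\mu|(t)}{1+t^2}=o(R^2)$, which still forces $\phi_x(\pm R)f(\pm R)\to 0$ since $\phi_x(\pm R)=O(R^{-2})$, and the rest of your proof goes through with $|\mu|$ in place of $\mu$ in the integrability estimates.
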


With the above two observations established, we can reproduce the characterization of exponential Riesz bases via the counting function formulated in \cite{HNP}, and provide an estimate for the corresponding lower Riesz basis bound. We note that the proof for the first direction below is similar to the proof given in \cite{HNP}, and is included here for completeness.

\begin{theorem}\label{thm; HNP generating and counting}
Let $\Lambda\subseteq\R$ be uniformly discrete and denote $\psi(x)= 2\pi x-N_{\Lambda}(x)$. Then the exponential system $E(\L)$ is a Riesz basis in $L^2[0,1]$ if and only if  $\psi\in \textrm{BMO}(\R)$, and there exist $y>0$, two bounded functions $v_y,u_y\in L^{\infty}$ with $\|v_y\|_{\infty}<\pi/2$, and a constant $c\in\mathbb{\R}$ so that
\begin{equation}\label{HS for N}
P_y[\psi](x)=v_y(x)+\widetilde{u}_y(x)+c.
\end{equation}
Moreover, in this case such a decomposition holds for all $y>0$ and the lower Riesz basis bound of $E(\L)$ satisfies (\ref{eq: riesz bound estimate phase}).
\end{theorem}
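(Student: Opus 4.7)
The plan is to reduce the theorem to Theorem \ref{thm; exp rb and phase} (the phase-function characterization) by establishing an explicit identification of $P_y[\psi]$ with $\alpha_{\Lambda_y}$ via Lemma \ref{lemma; Py using measure}. First I would apply Lemma \ref{lemma; Py using measure} to the Borel measure $d\mu = 2\pi\,dt - dN_\Lambda$, whose antiderivative starting from $0$ is $\psi$. This requires verifying $\psi \in L^1_\Pi(\R)$: in the forward direction it follows from the density estimate $|N_\Lambda(x)-x| = O(1)$ enjoyed by uniformly discrete Riesz bases, while in the converse direction it is a standard consequence of $\psi \in \textrm{BMO}(\R)$. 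The lemma then yields
\begin{equation*}
P_y[\psi](x) = 2\pi x - \int_0^x P_y[dN_\Lambda](\tau)\,d\tau + C(y),
\end{equation*}
and inserting the defining identity $\alpha_{\Lambda_y}(x) = 2\int_0^x P_y[dN_\Lambda](\tau)\,d\tau - 2\pi x$ gives the key relation
\begin{equation*}
P_y[\psi](x) = -\tfrac{1}{2}\alpha_{\Lambda_y}(x) + \pi x + C(y),
\end{equation*}
where the linear term $\pi x$ is to be absorbed into the constants of the decomposition using the density-one normalization available for Riesz bases in $L^2[0,1]$.

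With this identity in hand, both implications are routine. In the forward direction, Theorem \ref{thm; exp rb and phase} supplies bounded $u_y,v_y$ with $\|v_y\|_\infty < \pi/2$ such that $\alpha_{\Lambda_y} = v_y + \widetilde{u}_y + c$; substituting into the key relation produces a decomposition of the form (\ref{HS for N}) for $P_y[\psi]$ with the same bounded components, up to signs and the absorption of constants. Letting $y \to 0$ and using that the Poisson extension of a BMO function recovers it almost everywhere places $\psi \in \textrm{BMO}(\R)$. Conversely, (\ref{HS for N}) transfers back through the identity into a decomposition $\alpha_{\Lambda_y} = v_y' + \widetilde{u}_y' + c'$ satisfying the hypotheses of Theorem \ref{thm; exp rb and phase}, which yields the Riesz basis property. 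The lower bound estimate (\ref{eq: riesz bound estimate phase}) is preserved verbatim, since the identification respects $\|u_y\|^0_\infty$ and $\|v_y\|_\infty$ up to an innocuous relabeling of the roles of $u_y$ and $v_y$.

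The main obstacle I anticipate is the careful handling of the linear-in-$x$ contribution $\pi x$ in the key relation: it cannot literally sit on the right-hand side of a Helson--Szeg\"o decomposition (whose sum $v_y + \widetilde{u}_y + c$ lies in $\textrm{BMO}(\R)$), so its presence must be reconciled with the $L^1_\Pi$ hypothesis and with the density normalization inherent to the Riesz basis property. Once this bookkeeping is settled, the remainder is a direct translation through the machinery already developed in Theorems \ref{thm; exp rb and phase} and \ref{rb as riesz}.
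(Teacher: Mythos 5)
Your overall strategy for the converse direction --- applying Lemma \ref{lemma; Py using measure} to $d\mu=2\pi\,dt-dN_\Lambda$ and reducing to Theorem \ref{thm; exp rb and phase} --- is exactly the paper's route. But the step you yourself flag as ``the main obstacle'' is a genuine gap, and the fix you propose does not exist. A residual linear term $\pi x$ cannot be ``absorbed into the constants'': a nonconstant linear function is not in $\textrm{BMO}(\R)$ and cannot appear inside a decomposition $v_y+\widetilde{u}_y+c$ with $u_y,v_y\in L^{\infty}$, and no ``density-one normalization'' removes it. The content of the identification (recorded in Remark \ref{relation between things}) is that, with the normalizations of the Poisson kernel and of $\psi$ matched consistently, the linear contributions of $2\pi\,dt$ and of $\sum_{\lambda}\delta_{\lambda}$ cancel \emph{exactly}, giving $\int_0^xP_y[\mu](\tau)\,d\tau=-\alpha_{\Lambda_y}(x)$ with coefficient precisely $-1$ and no remainder; that exact cancellation is what makes the theorem true, and it is the thing your sketch must actually verify rather than defer. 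Your factor $\tfrac12$ is likewise not an ``innocuous relabeling'': in the converse direction it converts a decomposition of $P_y[\psi]$ with $\|v_y\|_{\infty}<\pi/2$ into a decomposition of $\alpha_{\Lambda_y}$ whose bounded part $-2v_y$ only satisfies $\|2v_y\|_{\infty}<\pi$, which can violate the $\pi/2$ threshold that Theorem \ref{thm; exp rb and phase} requires --- so the reduction would fail at exactly the quantitative point that matters. (A sign or a coefficient $-1$ is harmless; a coefficient $-1/2$ is not.)

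There is a second, smaller gap in your forward direction. The paper does not pass through the phase function there: it uses Theorem \ref{thm; HNP generating} to write $\log|F_{\Lambda}(x+iy)|^2=u_y-\widetilde{v}_y$, invokes Remark \ref{dif in bmo} to conclude $\log|F_{\Lambda}(x)|^2\in\textrm{BMO}(\R)$, and then uses Lemma \ref{lemma; N as boundry limit of F} to identify $\psi$, up to a constant, with the Hilbert transform of $\log|F_{\Lambda}(x)|^2$; this yields both $\psi\in\textrm{BMO}(\R)$ and (\ref{HS for N}) at once, since $P_y[\psi]=\widetilde{\log}|F_{\Lambda}(\cdot+iy)|^2$ up to a constant. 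Your alternative --- ``letting $y\to0$ and using that the Poisson extension of a BMO function recovers it'' --- is circular as written: you may not treat $\psi$ as the boundary function of a BMO Poisson extension before proving $\psi\in\textrm{BMO}(\R)$, and knowing that $P_y[\psi]\in\textrm{BMO}(\R)$ for a fixed $y$ gives no BMO bound on the pointwise limit without an argument of the type in Remark \ref{dif in bmo} controlling $\psi-P_y[\psi]$. (Relatedly, your justification of $\psi\in L^1_{\Pi}(\R)$ via $|N_{\Lambda}(x)-x|=O(1)$ assumes boundedness of the discrepancy, which is not known for a general exponential Riesz basis; the correct route is through BMO membership, which implies $L^1_{\Pi}$.)
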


\begin{proof}
Assume first that $E(\L)$ is a Riesz basis in $L^2[0,1]$ and fix $y>0$.  Theorem \ref{thm; HNP generating} implies that there exist $v_y,u_y\in L^{\infty}$ with $\|v_y\|_{\infty}<\pi/2$ so that
\begin{equation}\label{again with logF}
\log{ |F_{\Lambda }(x+iy)|^2} = -\widetilde{v}_y(x)+{u}_y(x).
\end{equation}
In particular, this implies that $\log{|F_{\Lambda }(x+iy)|^2}\in \textrm{BMO}(\R)$ and so, due to Remark (\ref{dif in bmo}), the same is true for $\log{|F_{\Lambda }(x)|^2}$. It follows that the Hilbert transform of $\log{|F_{\Lambda }(x)|^2}$ is defined and by Lemma \ref{lemma; N as boundry limit of F}, that it is equal up to a constant to
$\psi$. We conclude that $\psi\in \textrm{BMO}(\R)$ and that its Poisson extension is equal to $\widetilde{\log}{ |F_{\Lambda }(x+iy)|^2}$. The conclusion now follows from (\ref{again with logF}).

In the opposite direction, assume that $\psi\in \textrm{BMO}(\R)$ and that (\ref{HS for N}) holds. Consider the measure $\mu$ given by $d\mu=2\pi dx-\sum_{\lambda\in\Lambda}\delta_{\lambda}$, where $\delta_{\lambda}$ is the standard $\delta$ measure translated by $\lambda$. Observe that the function $\psi$ and the measure $\mu$ are related by (\ref{antiderivative}). Since $\L$ is uniformly discrete we have $\int_{\R} d\mu/(1+x^2)<\infty$ and moreover, since we assume that $\psi\in \textrm{BMO}(\R)$ we also have $\psi\in L^1_{\Pi}(\R)$.  Lemma \ref{lemma; Py using measure} may now be applied and the identity (\ref{poisson identity}) holds. Applying  (\ref{HS for N}) we may conclude that there exist $y>0$, two bounded functions $v_y,u_y\in L^{\infty}$ with $\|v_y\|_{\infty}<1/4$, and a constant $c(y)\in\R$ so that
\[
\int_0^xP_y[\mu](\tau)d\tau=v_y(x)+\widetilde{u}_y(x)+c(y).
\]
One may check that the integral on the left hand side is equal to $-\alpha_{\L_y}(x)$. The conclusion now follows by applying Theorem \ref{thm; exp rb and phase}.

\end{proof}
\begin{remark}\label{relation between things}
Assume that $E(\L)$ is a Riesz basis in $L^2[0,1]$ and let $y>0$. It is perhaps worth noting that under this condition, the following relations between the generating, phase, and counting functions have been established:
\[
\begin{aligned}
&\widetilde{\log}{|F_{\L}(x)|^2}=2\pi x-N_{\Lambda}(x)+\textrm{Const},\\
&\widetilde{\log}{|F_{\L}(x+iy)|^2}=P_y(2\pi x-N_{\Lambda}(x))+\textrm{Const}= -\alpha_{\L_y}(x)+\textrm{Const}.
\end{aligned}
\]
\end{remark}

\section{Riesz basis bounds in some classical results}

In this section we study the lower Riesz basis bound of certain families of exponential Riesz bases. We consider Avdonin's theorem, Levin and Golovin's theorem, and applications to Marcinkiewicz-Zygmund families. Some additional examples are given. The estimates we provide are in general not sharp, as we discuss below.

\subsection{Perturbations of the integers}\label{kadetz lindner avdonin}
 For $\Lambda=\Z$ the product (\ref{eq;product}) is equal to $(\sin\pi z)/\pi$ and so, up to a multiplicative constant, $F_{\Lambda}(z)=e^{2\pi i z}-1$. For a fixed $y>0$ Remark \ref{relation between things} now implies that
 \begin{equation}\label{alpha Z}
\alpha_{\Z_y}(x)=-\widetilde{\log}{|e^{2\pi(-y+ i x)}-1|^2} +\textrm{Const}.
\end{equation}
With this one can estimate the lower Riesz basis bound in cases where $\L$ is not "too far" from $\Z$.

\begin{corollary}\label{cor; general stability}
Let $\Lambda\subseteq\R$ be uniformly discrete and assume that $\mu_n=\lambda_n-n$ satisfy
$
\sup_n{|\mu_n|}<\infty.
$
For $y>0$ denote
\[
\tau_y:=\sup_{x\in{\R}}\Big|{\sum_n\int_{x}^{x+\mu_n}\frac{y}{(n -t)^2+y^2}\: dt}\big|.
\]
If there exists $y_0\geq 1$ so that $\tau_{y_0}<\pi/4$ then $E(\Lambda)$ is a Riesz basis in $L^2[0,1]$ and its lower Riesz basis bound satisfies
%\[
%A(\Lambda)\geq \frac{3}{\pi}\frac{e^{-8\pi (\frac{y}{\delta})}}{1- e^{-y}}\cos^2{2\pi \tau_y}\quad \forall y\geq y_0.
%\]
%In particular this implies that
\begin{equation}\label{general bound estimate}
A(\Lambda)\geq \frac{1}{28\delta(\Lambda)}e^{-\frac{8\pi y}{\delta(\Lambda)}}\cos^2{2\tau_{y_0}}.
\end{equation}
\end{corollary}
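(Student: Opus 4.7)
The plan is to apply Theorem~\ref{thm; exp rb and phase} with the choice $y=y_0$. To do so, I want to produce a decomposition $\alpha_{\Lambda_{y_0}} = v_{y_0} + \widetilde{u}_{y_0} + c$ in which $u_{y_0}$ is inherited from the unperturbed integer case and $v_{y_0}$ is a bounded perturbation whose $L^\infty$ norm is controlled by $\tau_{y_0}$. The natural splitting is $\alpha_{\Lambda_{y_0}} = \alpha_{\Z_{y_0}} + (\alpha_{\Lambda_{y_0}} - \alpha_{\Z_{y_0}})$, with the two summands handled separately.

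For the integer piece, I would use that $F_{\Z}(z)$ equals a constant multiple of $e^{2\pi i z}-1$ (via the Weierstrass product for $\sin(\pi z)/\pi$), so that equation~(\ref{alpha Z}) gives $\alpha_{\Z_y} = \widetilde{u}_y + \mathrm{const}$ with $u_y(x) = -\log|e^{-2\pi y + 2\pi i x}-1|^2$. Since $|e^{-2\pi y + 2\pi i x}-1|$ lies between $1-e^{-2\pi y}$ and $1+e^{-2\pi y}$, a short computation gives $\|u_y\|^0_\infty = \log\frac{1+e^{-2\pi y}}{1-e^{-2\pi y}}$, and under the hypothesis $y_0 \geq 1$ this yields $e^{-\|u_{y_0}\|^0_\infty} \geq \frac{1-e^{-2\pi}}{1+e^{-2\pi}} > \tfrac{1}{4}$, accounting for the extra factor of $4$ in the denominator $28\,\delta(\Lambda)$ of~(\ref{general bound estimate}).

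For the perturbation, the key calculation is a substitution $s = t-\mu_n$ applied term-by-term to
\[
\alpha_{\Lambda_y}(x) - \alpha_{\Z_y}(x) = 2\int_0^x \sum_n \left(\frac{y}{(t-\lambda_n)^2+y^2}-\frac{y}{(t-n)^2+y^2}\right) dt,
\]
which reorganises the expression into a pair of sums: one reads $2\sum_n \int_x^{x-\mu_n} \frac{y}{(s-n)^2+y^2}\,ds$ and the other $2\sum_n \int_0^{-\mu_n} \frac{y}{(s-n)^2+y^2}\,ds$. The latter is independent of $x$ and is absorbed into the additive constant of the decomposition; the former matches, up to the sign of $\mu_n$ (which is immaterial inside the absolute value defining $\tau_y$), twice the quantity whose supremum is $\tau_y$. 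Taking $v_y$ to be the first sum then gives $\|v_y\|_\infty \leq 2\tau_y$.

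Finally, under the hypothesis $\tau_{y_0} < \pi/4$ we have $\|v_{y_0}\|_\infty \leq 2\tau_{y_0} < \pi/2$, so Theorem~\ref{thm; exp rb and phase} applies, and~(\ref{eq: riesz bound estimate phase}) specialises to
\[
A(\Lambda) \;\geq\; \frac{1}{7\delta(\Lambda)}\, e^{-8\pi y_0/\delta(\Lambda)}\, e^{-\|u_{y_0}\|^0_\infty}\, \cos^2(2\tau_{y_0}),
\]
which combined with the $\tfrac{1}{4}$ bound on $e^{-\|u_{y_0}\|^0_\infty}$ yields~(\ref{general bound estimate}). I expect the main obstacle to be the perturbation calculation: keeping careful track of signs and endpoint contributions so that the bound $\|v_y\|_\infty \leq 2\tau_y$ emerges with exactly the constant $2$ rather than something larger, since this controls the argument of the $\cos^2$ in the final estimate.
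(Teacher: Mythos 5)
Your proposal is correct and takes essentially the same route as the paper: the same splitting $\alpha_{\Lambda_{y_0}}=\alpha_{\Z_{y_0}}+\big(\alpha_{\Lambda_{y_0}}-\alpha_{\Z_{y_0}}\big)$, the same change of variables showing the perturbation term equals an $x$-independent constant plus a function of sup-norm at most $2\tau_{y_0}$, and the same use of (\ref{alpha Z}) together with $1/2\leq |e^{2\pi(-y_0+ix)}-1|\leq 2$ to absorb $e^{-\|u_{y_0}\|^0_{\infty}}\geq 1/4$ into the constant $28$ before invoking Theorem \ref{thm; exp rb and phase}. The only caveat, shared equally by the paper's own argument, is the quick identification of $\sup_x\big|\sum_n\int_{x}^{x-\mu_n}\big|$ with $\tau_{y_0}$, which is harmless in all the intended applications.
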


\begin{proof}
Fix $y>0$. We first show that there exists a constant $c(y)$ so that
\begin{equation}\label{eq; distance estimate}
2\tau_y\geq \|\alpha_{\Z_y}-\alpha_{\Lambda_y}+c(y)\|_{\infty}.
\end{equation}
Indeed, this follows from
\[
\begin{aligned}
\frac{1}{2}\big(\alpha_{\Z_y}(x)-\alpha_{\L_y}(x)\big)&=\sum_n\int_{0}^{x}\frac{y}{(n -t)^2+y^2}\: dt-\sum_n\int_{0}^{x}\frac{y}{(n+\mu_n -t)^2+y^2}\: dt\\
&=\sum_n\int_{x-\mu_n}^{x}\frac{y}{(n -t)^2+y^2}\: dt-\sum_n\int_{-\mu_n}^{0}\frac{y}{(n -t)^2+y^2}\: dt.
\end{aligned}
\]
The sums above converge absolutely since the sequence $\{\mu_n\}$ is bounded. In particular, this implies that
the right most sum in the last expression converges to a constant which does not depend on $x$. The relation in  (\ref{eq; distance estimate}) follows.

Denote $v_y=\alpha_{\L+iy}(x)-\alpha_{\Z+iy}(x)-c(y)$. The relation (\ref{alpha Z}) implies that
\[
 \alpha_{\Lambda+yi}(x)=-\widetilde{\log}{|e^{2\pi(-y+ i x)}-1|^2}+v_y+\textrm{Const}.
\]
Note that if (for example) $y_0\geq 1$ then $1/2\leq |e^{2\pi(-y_0+ i x)}-1|\leq 2$. The Riesz basis bound estimate now follows from (\ref{eq: riesz bound estimate phase}) and (\ref{eq; distance estimate}).
\end{proof}

The expression $e^{ \frac{-8\pi y}{\delta}}$ in the bound estimate (\ref{general bound estimate}) is a 'price' we pay for shifting $\L$ by $iy$ in Section \ref{shift by y}. In general, it generates bound estimates which are not sharp. One way to demonstrate this defect is by considering the Kadetz $1/4$ theorem, mentioned in the introduction. It states that if $\mu_n=\lambda_n-n$ satisfy
\begin{equation}\label{kadec cond}
\mu:=\sup_n{|\mu_n|}<1/4
\end{equation}
 then $E(\L)$ is a Riesz basis in $L^2[0,1]$, and that an estimate for the corresponding lower Riesz basis bound is given by:
 \begin{equation}\label{kadec bound}
A(\L)\geq 2\sin ^2{\frac{\pi}{4}(1-4 \mu)}.
\end{equation}

It was observed by Hrusch\"{e}v \cite{Hrusch} that by comparing $\alpha_{\L_y}$ to $\alpha_{\Z_y}$, in much the same way as was done above, one can reproduce  this theorem. Indeed, by Poisson's summation formula we have
\begin{equation}\label{poission}
\sum_n\frac{y}{(n -t)^2+y^2}=\pi \frac{1-e^{-4\pi y}}{|1-e^{2\pi(-y+ it)}|^2}\leq \pi \frac{1+e^{-2\pi y}}{1-e^{-2\pi y}}.
\end{equation}
This expression converges uniformly to $\pi$ as $y$ tends to infinity. Condition (\ref{kadec cond}) now implies that $\tau_{y_0}<\pi/4$  for $y_0$ large enough, and the requirements of Corollary \ref{cor; general stability} are met. A corresponding estimate for the lower  Riesz basis bound can be found by noting that if $e^{-2\pi y_0}=(1-4\mu)/4<(1-4\mu)/(1+12\mu)$ then $\tau_{y_0}\leq \pi(1+4\mu)/8$. Since $\delta(\Lambda)\geq 1/2$, the estimate (\ref{general bound estimate}) gives the bound
\[
A(\Lambda)\geq \frac{1}{28\cdot 4^8}(1-4\mu)^8\sin ^2{\frac{\pi}{4}(1-4 \mu)}.
\]
This estimate is clearly worse then the estimate in (\ref{kadec bound}).

Nevertheless, Corollary \ref{cor; general stability} allows one to estimate the Riesz bases bound of general exponential Riesz bases, and in some cases to improve the dependence on certain parameters compared to estimates obtained in other methods.

To demonstrate this, consider Avdonin's averaged version of Kadetz' theorem, which was also formulated in the introduction. It was observed by  Hrusch\"{e}v that this theorem too can be reproduced via Pavlov's characterization  \cite{Hrusch} . Recall the formulation of the theorem: Let $\L\subseteq\R$ be a uniformly discrete sequence with separation constant $\delta:=\delta(\Lambda)$, and assume that there exists $L>0$ so that $\mu_n=\lambda_n-n$ satisfy
$
\sup_n{|\mu_n|}\leq L.
$  Avdonin's theorem states that if for some $N\in \N$ the averaged perturbations are well bounded, that is,
\begin{equation}\label{d_n}
\mu^*(N):=\sup_{m\in\Z}\Big|\frac{1}{N}\sum_{n=mN}^{(m+1)N-1}\mu_n\Big|<\frac{1}{4},
\end{equation}
then $E(\L)$ is a Riesz basis in $L^2[0,1]$. An estimate for the lower Riesz basis bound, with the parameters $\delta=\delta(\Lambda), L, N$ and $\theta:=1/4-\mu^*(N)$, was obtained by Lindner who (essentially) showed that
\[
A(\L)\geq e^{-20 \pi ^2(12(L+1))^{N+\frac{32L^2}{\theta}}}\big(\frac{\theta\delta}{108(L+1)}\big)^{240(12L)^{N+\frac{32L}{\theta}}}.
\]
Note that some parts of the last expression were modified for ease of presentation, though the rate of decay, which is double exponential in all parameters but $\delta$, was kept. See \cite{Lindner} for a precise formulation.

 The estimate (\ref{general bound estimate}) allows us to improve (and simplify) this bound, by reducing the rate of decay from a double exponential rate to an exponential one.
\begin{theorem}\label{thm:avdonin}
Let $\L\subseteq\R$ be uniformly discrete. Assume that there exists $L>0$ so that $\mu_n=\lambda_n-n$ satisfy
$
\sup_n{|\mu_n|}\leq L.
$  If for some $N\in\N$ the inequality (\ref{d_n}) holds then $E(\L)$ is a Riesz basis in $L^2[0,1]$ with lower Riesz basis bound
\[
A(\L)\geq \frac{1}{7\delta(\Lambda)}e^{- \frac{960\pi L^2N}{\delta(\Lambda)(1-4\mu^*)^2}}\sin^2{\frac{\pi}{4}(1-4\mu^*)},
\]
where $\mu^*:=\mu^*(N)$.
\end{theorem}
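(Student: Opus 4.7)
The plan is to apply Corollary~\ref{cor; general stability} with $y=120L^2N/(1-4\mu^*)^2$---so that $8\pi y/\delta(\Lambda)$ matches the exponent claimed in the theorem---and to show that $\tau_y\le\pi(1+4\mu^*)/8$, which yields $\cos^2(2\tau_y)=\sin^2\tfrac{\pi}{4}(1-4\mu^*)$. The averaging hypothesis is exploited by decomposing $\mu_n=\mu^*_n+\nu_n$, where $\mu^*_n:=T_{\lfloor n/N\rfloor}$ is the piecewise-constant ``block average'' with $T_m:=\frac{1}{N}\sum_{k=0}^{N-1}\mu_{mN+k}$, and $\nu_n:=\mu_n-\mu^*_n$ is the mean-zero ``fluctuation''. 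The decisive features are $|\mu^*_n|\le\mu^*$, $|\nu_n|\le 2L$, and $\sum_{k=0}^{N-1}\nu_{mN+k}=0$ for every $m$.

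Writing $g(s):=y/(s^2+y^2)$ and using $\int_x^{x+\mu_n}g(n-t)\,dt=\arctan\tfrac{n-x}{y}-\arctan\tfrac{n-x-\mu_n}{y}$, the integrand defining $\tau_y$ splits as $\int_x^{x+\mu^*_n}g(n-t)\,dt+\bigl[\arctan\tfrac{n-x-\mu^*_n}{y}-\arctan\tfrac{n-x-\mu_n}{y}\bigr]$, giving $\tau_y\le\tau_y(\Lambda^*)+\mathcal F(x)$. Since $|\mu^*_n|\le\mu^*$, the Poisson-summation argument already recorded in Section~\ref{kadetz lindner avdonin} yields $\tau_y(\Lambda^*)\le\mu^*\pi(1+e^{-2\pi y})/(1-e^{-2\pi y})$. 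For $\mathcal F(x)$, the mean value theorem expresses each arctan-difference as $\nu_n g(\eta_n)$ for some $\eta_n$ with $|\eta_n-(n-x)|\le L$; within each block the mean-zero identity allows subtracting a constant,
\[
\Bigl|\sum_{k=0}^{N-1}\nu_{mN+k}g(\eta_{mN+k})\Bigr|=\Bigl|\sum_{k=0}^{N-1}\nu_{mN+k}\bigl[g(\eta_{mN+k})-g(\eta_{mN})\bigr]\Bigr|\le 2LN(N+2L)\sup_{I_m}|g'|,
\]
where $I_m$ is a neighbourhood of the $m$-th block of length $\le N+2L$. The elementary bound $|g'(u)|\le 1/(u^2+y^2)$, together with a Riemann-sum estimate, gives $\sum_m\sup_{I_m}|g'|\lesssim 1/(Ny)$ once $y\gtrsim N$, so that $\mathcal F(x)\le CL(L+N)/y$.

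With $y=120L^2N/(1-4\mu^*)^2$, the algebraic identity
\[
\frac{\pi(1+4\mu^*)}{8}-\mu^*\pi\,\frac{5-4\mu^*}{3+4\mu^*}=\frac{3\pi(1-4\mu^*)^2}{8(3+4\mu^*)}
\]
quantifies the residual gap the Kadec-type bound on $\tau_y(\Lambda^*)$ leaves below $\pi(1+4\mu^*)/8$: it is of order $(1-4\mu^*)^2$, and the fluctuation bound $CL(L+N)/y=C(L+N)(1-4\mu^*)^2/(120LN)$ can be made to fit inside it. The resulting $\tau_y\le\pi(1+4\mu^*)/8$, combined with Corollary~\ref{cor; general stability}---upgraded to Theorem~\ref{thm; exp rb and phase} directly to sharpen the prefactor from $1/(28\delta(\Lambda))$ to $1/(7\delta(\Lambda))$, since for this large $y$ the companion norm $\|u_y\|_\infty^0=O(e^{-2\pi y})$ is negligible---produces the stated bound. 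The main technical difficulty is the constant-tracking in the fluctuation estimate, to guarantee that the single universal factor $c=120$ in $y$ suffices uniformly in the parameters $L,\,N,\,\mu^*$.
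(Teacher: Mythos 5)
Your proposal is correct in strategy and reaches the stated bound, but the central estimate of $\tau_y$ is organized quite differently from the paper's. The paper first uses the mean value theorem to replace $\int_x^{x+\mu_n}f_n(t)\,dt$ by $\mu_n f_n(x)$ up to an error of size $4\pi L^2/y$, and then bounds the single\--point sum $\sum_n\mu_n f_n(x)$ by a Stieltjes integration by parts against the window sums $\Delta_x(R)=\sum_{x-R\le n\le x+R}\mu_n$; the hypothesis (\ref{d_n}) enters only through the bound $|\Delta_x(R)|\le 2R\big(\mu^*+L/(2r_0-1)\big)$ for $R\ge r_0N$, with the free parameter $r_0$ optimized at the end. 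You instead split the perturbations themselves, $\mu_n=\mu_n^*+\nu_n$, reduce the block\--average part to the Kadec/Poisson\--summation estimate, and exploit the averaging through exact cancellation of the fluctuations within each block (Abel summation against $g'$). Both routes give $\tau_y\le \mu^*\pi(1+o(1))+O\big(L(L+N)/y\big)$ and force the same choice $y\sim L^2N/(1-4\mu^*)^2$; your gap identity $\frac{\pi(1+4\mu^*)}{8}-\mu^*\pi\frac{5-4\mu^*}{3+4\mu^*}=\frac{3\pi(1-4\mu^*)^2}{8(3+4\mu^*)}$ (which checks out) plays the role of the paper's split of the budget $\pi(1-4\mu^*)/8$ into two halves. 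Your version makes the role of (\ref{d_n}) more transparent, and your remark about passing directly to Theorem \ref{thm; exp rb and phase} to recover the prefactor $1/(7\delta)$ is in fact needed: the paper's last step plugs into (\ref{general bound estimate}), which only yields $1/(28\delta)$.

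Two points need more care than your sketch provides. First, the bound $\tau_y(\Lambda^*)\le\mu^*\pi\frac{1+e^{-2\pi y}}{1-e^{-2\pi y}}$ is not a direct consequence of (\ref{poission}): the integrals $\int_x^{x+\mu_n^*}$ run over different subintervals of $[x-\mu^*,x+\mu^*]$ for different $n$, and the crude estimate $\sum_n\int_{J_n}g(n-t)\,dt\le\int_{x-\mu^*}^{x+\mu^*}\sum_n g(n-t)\,dt$ costs a factor $2$, which would break the inequality $\tau_y\le\pi(1+4\mu^*)/8$ for $\mu^*$ not small. This is repairable by shifting all evaluation points back to $n-x$ at the cost of an extra $O(\mu^*/y)$ term (the same device as the paper's term II), which the $(1-4\mu^*)^2$ gap absorbs. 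Second, as you acknowledge, the constants in $\mathcal F(x)\le CL(L+N)/y$ must be tracked: with block spread $N+4L$, and with the heavy overlap of the intervals $I_m$ when $L\gg N$ controlled by $\sup_{I_m}|g'|\le y^{-2}$ together with the fact that only $O(L/N)$ blocks sit near the maximum, one gets a $C$ of order $10$, which fits under $\frac{3\pi}{32}(1-4\mu^*)^2$ for $y=120L^2N/(1-4\mu^*)^2$ with essentially no room to spare; a safe write\--up would either sharpen these constants or accept a somewhat larger numerical factor in the exponent (the paper's own arithmetic with $120$ is comparably tight).
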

\begin{proof}
We follow Hrusch\"{e}v's proof, as presented in \cite{HNP}. We wish to apply Corollary \ref{cor; general stability}. For this we first note that  if
\[
f_n(x)=\frac{y}{(x-n)^2+y^2},
\]
 then by the mean value theorem there exists a point $\xi_n$ with $|x-\xi_n|\leq |\mu_n|\leq L$ so that
\[
\int_x^{x+\mu_n}f_n(t)\: dt=\mu_nf_n(\xi_n)=\mu_nf_n(x)\Big(1+\big(\frac{f_n(\xi_n)}{f_n(x)}-1\big)\Big)
\]
It follows that
\[
|\sum_n\int_x^{x+\mu_n}f_n(t)\:dt|\leq |\sum_n\mu_nf_n(x)|+\sum_n|\mu_n f_n(x)|\Big|\frac{f_n(\xi_n)}{f_n(x)}-1 \Big|=:|\textrm{I}|+\textrm{II}.
\]
To estimate II note that if $y\geq L\geq 1$ then
\[
\Big|\frac{f_n(\xi_n)}{f_n(x)}-1\Big| \leq |x-\xi_n|\frac{2|\xi_n-n|+|x-\xi_n|}{(\xi_n-n)^2+y^2}\leq L\big(\frac{1}{y}+\frac{L}{y^2}\big)\leq \frac{2L}{y}.
\]
Applying (\ref{poission}) we find that
\[
\textrm{II}\leq \frac{4\pi L^2}{y}.
\]

We turn to estimate $|I|$. For $x\in \R$ and $R>0$ denote
\[
\Delta_x(R)=\sum_{x-R\leq n\leq x+R}\mu_n.
\]
Let $r_0>1$ and put $R_0=r_0N$. We first note that if $R\geq R_0$ then
\begin{equation}\label{avdonin modified}
\frac{|\Delta_x(R)|}{2R} \leq\mu^*+\frac{L}{2r_0-1}.
\end{equation}
Indeed, to see this Let $k\in N$ be such that $kN\leq 2R\leq (k+1)N$.  Due to (\ref{d_n}), in each interval of the form $[mN,(m+1)N)$ the sum of the perturbations is bounded by $N\mu^*$.  Since $|\mu_n|\leq L$, the sum of the remaining perturbations is bounded by $LN$. Consequently $|\Delta_x(R)|\leq kN\mu^*+LN$ and so $|\Delta_x(R)|/2R\leq\mu^*+L/k$. As $(k+1)\geq 2r_0$ we obtain (\ref{avdonin modified}).

Observe that 
\[
\begin{aligned}
I&=\int_0^{\infty}\frac{y}{y^2+t^2}d\Delta_x(t)= \int_0^{\infty}\frac{\Delta_x(t)}{t}\frac{2yt^2}{(t^2+y^2)^2}dt\\
&=\Big(\int_0^{y^{-1}R_0}+\int_{y^{-1}R_0}^{\infty}\Big)\frac{\Delta_x(yt)}{2yt}\frac{4t^2}{(t^2+1)^2}dt=I_1+I_2,
\end{aligned}
\]
where $R_0=r_0N$ is as above.

The estimate (\ref{avdonin modified}) implies that
\[
I_2\leq (\mu^*+\frac{L}{2r_0-1})\int_0^{\infty}\frac{4t^2}{(t^2+1)^2}dt=(\mu^*+\frac{L}{2r_0-1})\pi.
\]
To estimate $I_1$ we note that for $0<t\leq R_0$ we have $\Delta_x(t)\leq 2R_0L$ and so
\[
I_1\leq \frac{2R_0L}{y}\int_0^{\infty}\frac{2t}{(t^2+1)^2}\: dt =\frac{2R_0L}{y}.
\]

Recall the notation $\tau_y$ from Corollary \ref{cor; general stability}. Combining the last few estimates we find that if $y\geq L$ then
\[
\tau_y\leq \textrm{I}_1+\textrm{I}_2+\textrm{II}\leq (\mu^*+\frac{L}{2r_0-1})\pi +\frac{2r_0 N L}{y}+\frac{4\pi L^2}{y}.
\]
Put
\[
r_0=\frac{8L}{1-4\mu^*}+1\quad\textrm{and }\quad y= \frac{120 L^2N}{(1-4\mu^*)^2}.
\]
With this choice, and since we may assume that $N\geq 2$, we obtain $\tau_y\leq \pi(1+4\mu^*)/8$. Plugging these in (\ref{general bound estimate}), the result follows.
\end{proof}

Theorem \ref{thm:avdonin} allows us to obtain uniform bounds in results which make use of Avdonin's theorem. We give one such example.

\begin{example}
In \cite{Seipint}, among other results, Seip shows that every subinterval of $[0,1]$ admits a Riesz basis of exponentials with integer frequencies. The proof makes use of Avdonin's theorem. Theorem \ref{thm:avdonin} allows us to strengthen this result as follows: Given any $0\leq a <1$ there exists a constant $C=C(a)$, so that any interval $I\subseteq [0,1]$ with $|I|\leq a$ admits a  Riesz basis $E(\Lambda)$  with $\Lambda\subseteq \Z$ and lower bound satisfying
\[
A(\L)\geq C(a)|I|.
\]
The dependence on $|I|$ in the above bound is optimal, as the norm of each element in the system is equal $\sqrt{|I|}$. It is well known that when $a=1$ the above result fails. This can be deduced, for example, from Theorem 4.17 in \cite{OlUlbook}. (This example follows an old discussion of the second author with Gady Kozma).
\end{example}

\subsection{Sine-type functions}\label{section: sine type}

Recall the family of weights $A_2(\R)$ which was defined in Section \ref{A_2 subsection}. Among the members of this family, a special role is played by those functions who not only satisfy (\ref{A_2 condition}), but are in fact bounded from above and below. Equivalently, these are the weights for which the decomposition (\ref{hs decomp}) holds with $v=0$. Weights in this subclass tend to provide results which are 'more similar' to those provided by the constant weight. 

For example, the system $E(\Z)$ in is an orthogonal basis in $L^2_w[0,1]$ if and only if $w$ is a constant, whereas it is a Schauder basis in the space if and only if $w\in A_2[0,1]$. It is not difficult to verify, that the weights $w$ for which $E(\L)$ is not only a Schauder basis, but in fact is a Riesz basis, are precisely those weights which are bounded from above and below.

Consider the family of generating functions characterized by Theorem \ref{thm; HNP generating}, that is, the family of appropriate generators  $F_{\Lambda}$  for which there exists $y>0$ so that
\[
w_{\Lambda}^{(y)}(x):=|F_{\Lambda}(x+iy)|^2\in A_2(\R).
\]
The subclass of this family for which there exists $y>0$ so that
\begin{equation}\label{bounded from above and below}
m\leq w_{\Lambda}^{(y)}(x)\leq M,
\end{equation}
where $m$ and $M$ are positive constants, is precisely the class of \textit{sine-type functions with real zeroes and with width of diagram equal to $2\pi$} which was introduced in the introduction. It was proved by Levin and Golovin that zero sets of functions from this class form Riesz bases in $L^2[0,1]$, \cite{Levin, Golovin}. Pavlov's characterization is an extension of this result. Theorem \ref{thm; HNP generating} allows us to give a simple estimate of the lower Riesz basis bound in the Levin-Golovin theorem.
\begin{corollary}\label{sine type}
Let $\Lambda\subseteq \R$ be uniformly discrete. If $\L$ is the zero set of a sine-type function $F_{\L}$ with width of diagram equal to $2\pi$, then $E(\L)$ is a Riesz basis in $L^2[0,1]$ with lower Riesz basis bound satisfying
\begin{equation}\label{sine type estimate}
A(\Lambda)\geq \frac{1}{7\delta(\L)}\frac{m}{M}e^{-\frac{8\pi y}{\delta(\L)}},
\end{equation}
where $y,m,M>0$ are such that the estimate (\ref{bounded from above and below}) holds.
\end{corollary}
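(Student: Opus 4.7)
The plan is to invoke the Helson--Szeg\H{o} estimate (\ref{hs rb bound}) from Theorem \ref{thm; HNP generating} and to observe that when $w_{\L}^{(y)}(x):=|F_{\L}(x+iy)|^2$ is pinched between two positive constants, the characteristic $[w_{\L}^{(y)}]_{A_2(HS)}$ is controlled by the trivial decomposition in (\ref{hs decomp}) with vanishing imaginary part.

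Concretely, under the hypothesis (\ref{bounded from above and below}), set $u:=\log w_{\L}^{(y)}$ and $v\equiv 0$ so that $w_{\L}^{(y)}=e^{u+\widetilde{v}}$ with $\|v\|_{\infty}=0<\pi/2$. Then $\|u\|_{\infty}^{0}=\tfrac{1}{2}(\sup u-\inf u)\leq \tfrac{1}{2}\log(M/m)$, and hence
\[
[w_{\L}^{(y)}]_{A_2(HS)}\leq e^{\|u\|_{\infty}^{0}}\leq \sqrt{M/m}.
\]
Substituting $[w_{\L}^{(y)}]_{A_2(HS)}^{-2}\geq m/M$ into (\ref{hs rb bound}) at the prescribed value of $y$ then produces (\ref{sine type estimate}) on the nose, without any supremum to optimize over.

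The one preliminary check is that $F_{\L}$ actually fits the framework of Section \ref{charact-generat}, i.e.\ agrees up to a multiplicative constant with the canonical product appearing in Theorem \ref{thm; HNP generating}. By definition $F_{\L}$ is entire of exponential type $2\pi$ with real zero set $\L$; the full-diagram condition (\ref{eq;full diagram}) is the assumption that the width of the indicator diagram is $2\pi$; and Phragm\'en--Lindel\"of applied in the strip $0\leq \Im z\leq y$, together with (\ref{bounded from above and below}), forces $F_{\L}$ to be bounded on $\R$ and therefore to lie in $L^2_{\Pi}(\R)$. Two entire functions of exponential type sharing their real zeros differ by a factor $e^{az+b}$; the full-diagram requirement forces $a$ to be real, and the strip-boundedness then forces $a=0$, so the given $F_{\L}$ coincides with the canonical product of Section \ref{charact-generat} up to a multiplicative constant, which is harmless since both the $A_2$ and the $A_2(HS)$ characteristics are scale-invariant. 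I expect this identification to be the only mildly nontrivial step; after it the corollary is a one-line substitution into (\ref{hs rb bound}).
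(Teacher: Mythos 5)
Your proof is correct and follows essentially the same route as the paper: take $u=\log w_{\Lambda}^{(y)}$, $v\equiv 0$ in the Helson--Szeg\H{o} decomposition to get $[w_{\Lambda}^{(y)}]_{A_2(HS)}^{-2}\geq m/M$, and substitute into (\ref{hs rb bound}) at the given $y$. Your preliminary identification of $F_{\Lambda}$ with the canonical product is a sensible extra check that the paper absorbs into the discussion preceding the corollary rather than into the proof itself.
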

\begin{proof}
The estimate (\ref{bounded from above and below}) implies that $\|\log w_{\Lambda}^{(y)}\|_{\infty}^{0}\leq \frac{1}{2}\log{(M/m)}$ and so
\[
[w_{\Lambda}^{(y)}]^{-2}_{A_2(HS)}\geq \frac{m}{M}.
\]
Putting this estimate in (\ref{hs rb bound}) the result follows.
\end{proof}

We give an example for an application of the last corollary.
\begin{example}\label{examp for vander}
Fix $K\in \N$ and let $\mathcal{X}:=\{x_1,x_2,...,x_K\}\subseteq [0,K]$. Denote by
\[
\Lambda:= \mathcal{X}+K\Z.
\]
the $K$-periodic set generated by $\mathcal{X}$. Then $E(\Lambda)$ is a Riesz basis in $L^2[0,1]$ with lower bound satisfying
\[
A(\L)\geq \frac{1}{7\delta}18^{-\frac{2K}{\delta}},
\]
where $\delta$ is the minimal distance between elements in $\mathcal{X}$ modulo $K$. The exponential rate of decay in $K$ is necessary.
\end{example}
\begin{proof}

To see this first note that the function $F_{\L}$, defined by the product (\ref{eq;product}), is equal up to a multiplicative constant to
\[
\prod_{k=1}^K  e^{\frac{\pi iz}{K}}\sin{\frac{\pi}{K}(z-x_k)}.
\]
Next, observe that
\[
2|e^{\frac{\pi}{K}iz}\sin \frac{\pi}{K}(z-x_k)|=|e^{\frac{2\pi}{K}i(z-x_k)}-1|,
\]
 and that
\[
1-e^{-\frac{2\pi}{K}y}\leq \big|e^{\frac{2\pi}{K}i(x+iy-x_k)}-1\big| \leq 1+e^{-\frac{2\pi}{K}y}.
\]
We obtain
\[
\frac{\inf_x|F_{\L}(x+iy)|^2}{\sup_x|F_{\L}(x+iy)|^2}\geq \Big(\frac{1-e^{-\frac{2\pi}{K}y}}{1+e^{-\frac{2\pi}{K}y}}\Big)^{2k}.
\]
For $y=(K\log3)/2\pi$ the last expression is equal $1/2^{2k}$. Inserting this to (\ref{sine type estimate}) and noting that $\delta\leq 1$ the estimate follows.

To see that an exponential rate of decay in $K$ is necessary, fix an integer $L\geq 2$ and let $K=4L$. Denote by $\L_L$ the $K$-periodic set which is generated by $\{L, L+1/2, L+1, L+3/2, ...3L-1/2\}$. Consider the function
\[
\phi_L(x):=\Big(\frac{L\sin \frac{\pi}{L}x}{\pi x}\Big)^L,
\]
and let $\Phi_L\in L^2[-1/2,1/2]$ be the inverse Fourier transform of $\phi_L$. Note that
\[
1=\phi_L(0)=\int_{-1/2}^{1/2}\Phi_L\leq \|\Phi_L\|_1\leq \|\Phi_L\|_2.
\]
Standard considerations give
\[
\begin{aligned}
A(\Lambda_L)&\leq \sum_{\l\in\L_L}|\langle \Phi_L, e^{2\pi i \l t}\rangle|^2= \sum_{\l\in\L_L}|\phi_L(\l)|^2\\
&\leq 2\Big(\frac{L}{\pi}\Big)^L\sum_{k=0}^{\infty}\frac{1}{(L+k/2)^L}\leq \Big(\frac{L}{\pi}\Big)^L\frac{4}{(L-1)L^{L-1}}\leq \frac{8}{\pi^{L}}.
\end{aligned}
\]
\end{proof}

\begin{remark}\label{not optimal I}
Example \ref{examp for vander} is presented here to demonstrate a technique and the estimate obtained in it is in general not optimal. See Remark \ref{not optimal II} for an improved estimate obtained by different means.
\end{remark}

In \cite{Avdonin} Avdonin proves that his perturbation result holds also for zero sets of sine type functions. This can be seen, for example, by noting that if $\L$ is such a zero set, then the phase function $\alpha_{\Lambda+iy}$ is - up to an additive constant - the Hilbert transform of a bounded function (this follows, for example, from  Remark \ref{relation between things}). For a similar reason, corresponding versions of Corollary \ref{cor; general stability} and Theorem \ref{thm:avdonin} hold in this case as well. We formulate the former but omit it's proof, as it is similar to the proof presented in the previous section.

\begin{corollary}\label{cor; general stability sine type}
Let $\Lambda\subseteq \R$ be uniformly discrete and assume that $\L$ is a zero set of a sine type function of full diagram for which the estimate (\ref{bounded from above and below}) holds with $y,m,M>0$. Let $\Gamma=\{\gamma_{\l}\}_{\l\in\L}$ be a uniformly discrete sequence for which $\mu_{\l}=\gamma_{\l}-\l$ satisfy
$
\sup_{\l\in\L}{|\mu_{\l}|}<\infty.
$
For $y>0$ denote
\[
\tau_y:=\sup_{x\in{\R}}\Big|{\sum_{\l\in\L}\int_{x}^{x+\mu_{\l}}\frac{y}{(\l -t)^2+y^2}\: dt}\big|.
\]
If there exists $y_0\geq y$ so that $\tau_{y_0}<\pi/4$ then $E(\Gamma)$ is a Riesz basis in $L^2[0,1]$ and its lower Riesz basis bound satisfies
%\[
%A(\Lambda)\geq \frac{3}{\pi}\frac{e^{-8\pi (\frac{y}{\delta})}}{1- e^{-y}}\cos^2{2\pi \tau_y}\quad \forall y\geq y_0.
%\]
%In particular this implies that
\[
A(\Lambda)\geq \frac{1}{7\delta(\Gamma)}\frac{m}{M}e^{-8\pi \frac{y_0}{\delta(\Gamma)}}\cos^2{2\tau_{y_0}}.
\]
\end{corollary}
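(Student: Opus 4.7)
The plan is to mimic the proof of Corollary \ref{cor; general stability}, with the zero set $\L$ of the sine-type function $F_\L$ playing the role that $\Z$ played there, and with the sine-type estimate $m\le|F_\L(x+iy)|^2\le M$ supplying $L^\infty$-control on the non-Hilbert-transform part of the phase $\alpha_{\Gamma_{y_0}}$.

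First I would compare the phase functions of $\Gamma$ and $\L$. Writing $\gamma_\l=\l+\mu_\l$ and manipulating integrals exactly as in the proof of Corollary \ref{cor; general stability}, one obtains
\[
\tfrac{1}{2}\bigl(\alpha_{\L_{y_0}}(x)-\alpha_{\Gamma_{y_0}}(x)\bigr)
=\sum_{\l\in\L}\int_{x-\mu_\l}^{x}\frac{y_0}{(\l-t)^2+y_0^2}\,dt\;-\;c'_0,
\]
with $c'_0:=\sum_{\l}\int_{-\mu_\l}^{0}\frac{y_0}{(\l-t)^2+y_0^2}\,dt$ an $x$-independent absolutely convergent constant (absolute convergence uses $\sup|\mu_\l|<\infty$ together with uniform discreteness of $\L$). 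Then, exactly as in Corollary \ref{cor; general stability}, the definition of $\tau_{y_0}$ yields a constant $c'$ with $\|\alpha_{\Gamma_{y_0}}-\alpha_{\L_{y_0}}+c'\|_\infty\le 2\tau_{y_0}$.

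Second, I would use the sine-type structure to identify $\alpha_{\L_{y_0}}$ itself as the Hilbert transform of a bounded function. Since $F_\L$ is an appropriate generator in the sense of Section \ref{charact-generat}, Remark \ref{relation between things} gives $\alpha_{\L_{y_0}}(x)=-\widetilde{\log}|F_\L(x+iy_0)|^2+\textrm{Const}$. As $F_\L$ has no zeros in $\mathbb{C}^+$, the function $\log|F_\L|^2$ is harmonic there, and the sub-exponential growth of $|F_\L|$ on the positive imaginary axis from (\ref{eq;full diagram}) lets the maximum principle propagate the hypothesis $m\le|F_\L(x+iy)|^2\le M$ to $m\le|F_\L(x+iy_0)|^2\le M$ for every $y_0\ge y$. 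Setting $u:=-\log|F_\L(\cdot+iy_0)|^2$ one obtains $\|u\|_\infty^0\le \tfrac{1}{2}\log(M/m)$.

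Combining the two steps, we have a decomposition $\alpha_{\Gamma_{y_0}}=v+\widetilde{u}+\textrm{Const}$ with $\|v\|_\infty\le 2\tau_{y_0}<\pi/2$ and $\|u\|_\infty^0\le \tfrac{1}{2}\log(M/m)$. Feeding this into the machinery of Section 2---Theorem \ref{rb as riesz} and the Helson--Szeg\"o estimate (\ref{hs bound -2}) applied to the outer weight $w=e^{-u+\widetilde v}$, together with Proposition \ref{thm; exp rb and projections}$'$---produces the Riesz basis conclusion for $E(\Gamma)$ with lower bound $\tfrac{1}{7\delta(\Gamma)}\tfrac{m}{M}e^{-8\pi y_0/\delta(\Gamma)}\cos^{2}2\tau_{y_0}$, using $[w]_{A_2(HS)}^{-2}\ge \cos^2\|v\|_\infty / e^{2\|u\|_\infty^0}\ge (m/M)\cos^2 2\tau_{y_0}$. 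The only mildly delicate point is the harmonic-function propagation of the sine-type estimate from $y$ up to $y_0$; the remainder is a direct transcription of the argument in Section \ref{kadetz lindner avdonin}.
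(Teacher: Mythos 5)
Your proposal is correct and follows exactly the route the paper intends: it transcribes the phase-comparison argument of Corollary \ref{cor; general stability} with $\L$ in place of $\Z$, uses the sine-type bound to control $\|u\|_\infty^0$ by $\tfrac12\log(M/m)$, and feeds the resulting Helson--Szeg\"o decomposition into Theorem \ref{thm; exp rb and phase} and (\ref{hs bound -2}), which is precisely why the paper omits the proof as ``similar to the proof presented in the previous section.'' The one point you rightly flag as delicate --- propagating $m\le|F_\L(x+iy)|^2\le M$ up to height $y_0\ge y$ --- is most cleanly justified not by a bare maximum-principle/growth argument (condition (\ref{eq;full diagram}) only controls a $\limsup$, which is insufficient for the lower bound on $|F_\L|$) but by the fact, already established in Lemma \ref{lemma; N as boundry limit of F}, that $F_\L$ is outer in $\mathbb{C}^+$, so that $\log|F_\L(\cdot+iy_0)|$ is the Poisson average of $\log|F_\L(\cdot+iy)|$ and the two-sided bound is preserved.
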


In a similar way one may follow the outline of Theorem \ref{thm:avdonin} and its proof to obtain an estimate for the lower Riesz basis bound in Avdonin's theorem for zero sets of sine type functions.

\begin{remark}
It is well known that a version of Kadetz' theorem holds also for general exponential Riesz bases in $L^2[0,1]$: If $E(\Lambda)$ is such a basis then there exists $\mu>0$ so that if $\Gamma=\{\gamma_{\l}\}_{\l\in \L}\subseteq\R$ satisfies $|\gamma_{\l}-\lambda|<\mu$, for every $\l\in \L$, then $E(\Gamma)$ is a Riesz basis as well.

A similar extension holds also for Avdonin's theorem. Indeed, if $E(\Lambda)$ is a Riesz basis then by Theorem \ref{thm; exp rb and phase} there exist $y>0$, two bounded functions $v_y,u_y\in L^{\infty}(\R)$, with $\|v_y\|_{\infty}<1/4$, and a constant $c\in\mathbb{\R}$, so that
$
\alpha_{\L+iy}=v_y+\widetilde{u}_y+c.
$
Picking any $0<\mu<1/4-\|v_y\|_{\infty}$ will provide the result. The proof is similar to the proof of Avdonin's theorem and we omit the details.

In the above extension for Kadetz' theorem, an estimate for $A(\Gamma)$ in terms of $A(\Lambda)$ and $\mu$ can be given: If $\mu<\sqrt{A(\L)}\delta(\L)/8\pi$ then
\begin{equation}
A(\Gamma)\geq \Big(\sqrt{A(\L)}-\frac{8\pi\mu}{\delta(\L)}\Big)^2.
\end{equation}
(this can be deduced, for example, from the proof in \cite{KozNit}, page 4). We do not know to provide a similar estimate for the extension of Avdonin's theorem.
\end{remark}

\subsection{Finite dimensions}

In this section we show that a version of the above results holds in the finite dimensional setting. We do so by showing that results in the finite dimensional setting can be  directly obtained from their counterpart on the real line. A similar approach is taken in \cite{ACL} to study versions of Kadetz' theorem in this setting. See Remark \ref{acknoledgments} below for more details.

Let $d\in \N$ and consider the space
\[
\ell^2_d:=\{(a_0,a_1,..,a_{d-1}): a_j\in\mathbb{C}\: j=0,...,d-1\},
\]
equipped with the standard norm and inner product. For $\theta \in [0,1]$ denote by
\[
e_d(\theta)=\frac{1}{\sqrt{d}}(1, e^{2\pi i \theta}, e^{2\pi i \theta}, e^{2\pi i 2\theta},..., e^{2\pi i (d-1)\theta})\in \ell^2_d,
\]
the normalized discrete exponential function. Given a finite set $\Theta\subseteq [0,1]$, we are interested in cases where
$
\mathcal{E}(\Theta):=\{e_d(\theta)\}_{\theta\in\Theta}
$
is a Riesz basis in $\ell^2_d$. It is strait forward to verify that this happens if and only if $|\Theta|=d$. In this case the Riesz basis bounds satisfy
\[
A(\Theta)=\frac{1}{d}\|(V(\Theta))^{-1}\|^{-2}\quad\textrm{and}\quad B(\Theta) = \frac{1}{d}\|(V(\Theta))\|^{2},
\]
where $V(\Theta)$ is the $d\times d$ Vandermonde matrix generated by $\{e^{2\pi i \theta}\}_{\theta\in \Theta}$, that is,
\[
V(\Theta)(j,k)=e^{2\pi i k\theta_j}\qquad k,j=1,...,d.
\]
We may rewrite this last fact as follows: For every $a\in \ell^2_d$ we have
\begin{equation}\label{RB via VdM}
A(\Theta)\|a\|^2\leq \frac{1}{d}\|V(\Theta)a\|^2\leq B(\Theta)\|a\|^2,
\end{equation}
and these bounds are sharp.

It is well known that certain problems regarding exponential systems over subsets of the real line can be reformulated in terms of the finite system $\mathcal{E}(\Theta)$, or equivalently, in terms of the Vandermonde matrix $V(\Theta)$. As far as we know, such a relation was first introduced by Landau to study complete systems of exponentials  \cite{Landau}, and by Bezuglaya and Katsnel'son to study Riesz bases of exponentials \cite{Katznelson}. Several later works apply this approach, for example, it was used in \cite{OlUluniversal, NOlUl} to study Riesz sequences and frames of exponentials. 

In the context of our discussion, the following particular case is of interest. The proof follows along the lines of the proofs in \cite{Katznelson, NOlUl} and we omit it.
\begin{theorem}\label{line to finite}
Fix $d\in\N$ and let $\theta =\{\theta_j\}_{j=1}^{d}\in [0,1]$. Denote
\[
\Lambda=\bigcup_{j=1}^d(d\theta_j+dZ).
\]
Then $\mathcal{E}(\Theta)$ has the same Riesz basis bounds in $\ell^2_d$ as $E(\L)$ has in $L^2[0,1]$, that is
\[
A(\Lambda)=A(\Theta)=\frac{1}{d}\|(V(\Theta))^{-1}\|^{-2},\quad\textrm{and}\quad B(\Lambda)=B(\Theta) = \frac{1}{d}\|V(\Theta)\|^{2}.
\]
\end{theorem}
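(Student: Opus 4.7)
The plan is to relate $\|f\|_{L^2[0,1]}^2$ for $f \in \mathrm{span}\, E(\Lambda)$ directly to a Vandermonde-weighted integral, exploiting the $d$-periodic structure of $\Lambda$. Parametrize the frequencies as $\lambda_{j,n} = d\theta_j + dn$ with $j \in \{1,\ldots,d\}$, $n \in \Z$, and factor a finite combination as
\[
f(t) = \sum_{j=1}^d \sum_n a_{j,n}\, e^{2\pi i (d\theta_j + dn) t} = \sum_{j=1}^d e^{2\pi i d \theta_j t}\, C_j(dt),
\]
where $C_j(s) := \sum_n a_{j,n}\, e^{2\pi i n s}$ is a $1$-periodic trigonometric polynomial.

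The key step is to substitute $s = dt$, partition $\int_0^d$ into the unit intervals $[k, k+1]$ for $k = 0, \ldots, d-1$, and use the $1$-periodicity of $C_j$ to pull out the factors $e^{2\pi i k \theta_j}$. Setting $b_j(u) := e^{2\pi i \theta_j u}\, C_j(u)$, this would yield the identity
\[
\|f\|_{L^2[0,1]}^2 = \frac{1}{d}\int_0^1 \sum_{k=0}^{d-1}\Big|\sum_{j=1}^d e^{2\pi i k \theta_j}\, b_j(u)\Big|^2\, du,
\]
while Parseval gives $\sum_{j,n} |a_{j,n}|^2 = \int_0^1 \|b(u)\|^2\, du$. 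The inner sum over $k$ equals, up to a unitary diagonal change of variable on $\C^d$ (shifting the row-index range from $\{0,\ldots,d-1\}$ to $\{1,\ldots,d\}$), the quantity $\|V(\Theta)^T b(u)\|^2$; in particular the matrix in play has the same singular values as $V(\Theta)$.

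With these two identities, the Riesz basis inequality for $E(\Lambda)$ reduces to finding the sharp constants in
\[
A \int_0^1 \|b(u)\|^2\, du \leq \frac{1}{d} \int_0^1 \|V(\Theta)^T b(u)\|^2\, du \leq B \int_0^1 \|b(u)\|^2\, du,
\]
quantified over vector-valued trigonometric polynomials $b$. Since such $b$ are dense in $L^2([0,1]; \C^d)$ and constant vectors $v \in \C^d$ in particular can be approximated in $L^2$, the sharp constants coincide with those of the pointwise inequality $A\|v\|^2 \leq \frac{1}{d}\|V(\Theta)^T v\|^2 \leq B\|v\|^2$ on $\C^d$, namely $\frac{1}{d}\|V(\Theta)^{-1}\|^{-2}$ and $\frac{1}{d}\|V(\Theta)\|^2$. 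By (\ref{RB via VdM}) these are precisely $A(\Theta)$ and $B(\Theta)$, yielding $A(\Lambda) = A(\Theta)$ and $B(\Lambda) = B(\Theta)$. The proof is essentially an algebraic computation; I anticipate no genuine obstacle, only careful bookkeeping in the change of variable (tracking the phase $e^{2\pi i k \theta_j}$) and the standard density step relating the integrated and pointwise inequalities.
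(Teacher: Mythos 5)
Your argument is correct. The paper itself omits the proof of this theorem, referring to \cite{Katznelson, NOlUl}, and the periodization you carry out --- folding $[0,d]$ into $d$ unit intervals, extracting the phases $e^{2\pi i k\theta_j}$, and reducing the two-sided inequality to the singular values of a matrix unitarily (diagonally) equivalent to $V(\Theta)$ --- is exactly the standard argument from those references. The only point requiring care is the one you identify: the admissible vector functions $b_j(u)=e^{2\pi i\theta_j u}C_j(u)$ do not include the constant singular vectors themselves, but they form a dense subspace of $L^2([0,1];\C^d)$ (multiplication by the unimodular factors $e^{2\pi i\theta_j u}$ is unitary and trigonometric polynomials are dense), so the sharp constants in the integrated inequality coincide with those of the pointwise one; the same density observation, combined with invertibility of $V(\Theta)$, also gives completeness of $E(\Lambda)$ in $L^2[0,1]$, so the constants you compute are genuinely the Riesz \emph{basis} bounds.
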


\begin{remark}\label{works also for frames}
The notion of \textit{Riesz basis} in Theorem \ref{line to finite} may be replaced by either one of the notions \textit{Bessel system, frame}, or \textit{Riesz sequence}. The corresponding Vandermonde matrix may be rectangular (See also \cite{NOlUl} in this regard).
\end{remark}

In the manuscripts mentioned above, relations of a similar form were used to obtain results for exponential systems over the line. Here we wish to work in the opposite direction, and use Theorem \ref{line to finite} to translate results obtained in previous sections into results in the finite dimensional setting. We start with a simple example.

\begin{example}\label{van der estimate}
Fix $d\in\N$ and let $\theta =\{\theta_j\}_{j=1}^{d}\in [0,1]$. Denote $\delta=\delta(\Theta)=\min_{j\neq k}|\theta_j-\theta_k|$ (mod 1). Then
\[
\|(V(\Theta))^{-1}\|\leq \frac{1}{7\sqrt{d}\delta}18^{\frac{2 }{\delta}}.
\]
\end{example}

\begin{proof}
This follows directly from combining Example \ref{examp for vander} and  Theorem \ref{line to finite}.
\end{proof}

\begin{remark}\label{not optimal II}
The result in Example \ref{van der estimate} can be obtained by different means, and in general is not optimal. Indeed, in \cite{BDGY} the authors apply a result of Gautschi \cite{Gau} to obtain the estimate:
\[
\|(V(\Theta))^{-1}\|\leq \sqrt{d}\pi^{d-1}\max_k\Pi_{j\neq k}\frac{1}{\delta_{jk}}\leq \frac{\sqrt{d}\pi^{d}}{d!\delta^d},
\]
where $\delta_{jk}=|\theta_j-\theta_k|$ (mod 1). Since $\delta\leq 1/d$ this estimate is in general better then the one given by  Example \ref{van der estimate} (as can be verified, say, via Stirling's approximation). Of course, one can apply Theorem \ref{line to finite} to this estimate and obtain a similar improvement of the estimate in  Example \ref{examp for vander} (see remark \ref{not optimal I}).
\end{remark}

For $a=(a_j)_{j=0}^{d-1}\in \ell^2_d$ let $p_a(z)=a_0+a_1z+...+a_{d-1}z^{d-1}$, and consider $p_a$ as a polynomial restricted to the torus $\mathrm{T}:=\R/\Z$ via the relation $z=e^{2\pi it}$. Note that $\|p_a\|_{L^2(\mathrm{T})}=\|a\|_{\ell^2_d}$. With this relation we identify $\ell^2_d$ with the space of polynomials
\[
\mathcal{P}_{d-1}:=\{p_a(z):\: a\in \ell^2_d\}.
\]
Let $\Theta\subset [0,1]$. The relation (\ref{RB via VdM}) implies that
$
\mathcal{E}(\Theta)
$
is a Riesz basis in $\ell^2_d$ if and only if $\Theta$ is a \textit{minimal sampling sequence} for $\mathcal{P}_{d-1}$ with the same bounds, that is, if and only if  $|\Theta|=d$ and
\begin{equation}\label{sampling}
A(\Theta)\|p\|_{L^2(\T)}^2\leq \frac{1}{d}\sum_{\theta\in\Theta}|p(e^{2\pi i \theta})|^2\leq B(\Theta)\|p\|_{L^2(\T)}^2\quad \forall p\in \mathcal{P}_{d-1}.
\end{equation}
We are interested in families of such minimal sampling sequences, for which the sampling bounds $A(\Theta)$ and $B(\Theta)$ are independent of the dimension $d$. 

We say that a family  $\Upsilon=\{\Theta_d\}_{d\in\N}$ is a \textit{triangular family} if for every $d\in\N$ we have $\Theta_d\subset [0,1]$, if in addition $|\Theta_d|=d$ we say that the triangular family is \textit{minimal}. We use the following convention.
\begin{definition}
Let $\Upsilon=\{\Theta_d\}_{d\in\N}$ be a minimal triangular family. We say that $\Upsilon$ is a \textit{minimal Marcinkiewicz-Zygmund} (MZ) family if there exist $A,B>0$ so that for every $d\in\Z$ the sequence $\Theta_d$ satisfies (\ref{sampling}) with $A(\Theta_d)\geq A$ and $B(\Theta_d)\leq B$. The largest $A$ and smallest $B$ for which these inequality hold are called the MZ lower and upper bounds. We denote them by $A(\Upsilon)$ and $B(\Upsilon)$.
\end{definition}

We will need the following definitions.

\begin{definition}
Let $\Upsilon=\{\Theta_d\}_{d\in\N}$ be a triangular family.

\vspace{5pt}

\begin{itemize}
\item[i.] We say that $\Upsilon$ is uniformly discrete if there exist $\delta>0$ so that for every $d\in\N$ and every $\theta_1,\theta_2\in\Theta_d$ we have
\[
|\theta_1-\theta_2|\geq \frac{\delta}{d} \textrm{ (mod 1)}.
\]
The largest $\delta$ for which this inequality holds is called the separation constant of $\Upsilon$. We denoted it by $\delta(\Upsilon)$.

\vspace{5pt}

\item[ii.] Let $\Sigma=\{\Delta_d\}_{d\in\N}$ be an additional triangular family, and let $\epsilon>0$. We say that $\Sigma$ is an $\epsilon$-perturbation of $\Upsilon$ if for every $d\in\N$ there exists a $1-1$ correspondence $\phi_d:\Theta_d\rightarrow\Delta_d$ so that
    \[
    |\theta-\phi_d(\theta)|\leq \frac{\epsilon}{d} \textrm{ (mod 1)}, \qquad \forall\theta\in\Theta_d.
    \]
\end{itemize}
\end{definition}

We first note the following (see also \cite{OrtegaCerda}).
\begin{proposition}\label{MZ bessel}
Let $\Upsilon=\{\Theta_d\}_{d\in\N}$ be a uniformly discrete minimal triangular family with separation constant $\delta(\Upsilon)$. Then for every $d\in\N$ the right hand side of  (\ref{sampling}) holds with
\[
 B(\Theta_d)\leq \frac{8\pi}{\min\{\delta(\Upsilon), 1\}}.
 \]
\end{proposition}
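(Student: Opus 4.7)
The plan is to reduce the statement to the real-line Bessel estimate (\ref{exp bessel}) via the finite-to-infinite dictionary provided by Theorem \ref{line to finite}. Fix $d\in \N$ and write $\Theta_d=\{\theta_1,\ldots,\theta_d\}\subseteq[0,1]$. Following the recipe in Theorem \ref{line to finite}, set
\[
\Lambda_d:=\bigcup_{j=1}^d\bigl(d\theta_j+d\Z\bigr)\subseteq\R.
\]
The right-hand inequality in (\ref{sampling}) is, by definition, equivalent to $\mathcal{E}(\Theta_d)$ being a Bessel system in $\ell^2_d$ with bound $B(\Theta_d)$. Invoking Remark \ref{works also for frames}, the theorem applies to Bessel systems as well, so $B(\Theta_d)$ equals the Bessel bound of $E(\Lambda_d)$ in $L^2[0,1]$.

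The next step is to estimate the separation constant $\delta(\Lambda_d)$ in terms of $\delta(\Upsilon)$. Two distinct elements of $\Lambda_d$ are either of the form $d\theta_j+dn$ and $d\theta_j+dm$ for the same $j$ and $n\neq m$, in which case they differ by at least $d\geq 1$; or they come from different $j,k$, in which case their difference is $d(\theta_j-\theta_k)+d(n-m)$ for some $n,m\in\Z$. Since $\Upsilon$ is uniformly discrete, $|\theta_j-\theta_k|\geq \delta(\Upsilon)/d$ modulo $1$, so the minimum of $|d(\theta_j-\theta_k)+d\ell|$ over $\ell\in\Z$ is at least $\delta(\Upsilon)$. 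Hence
\[
\delta(\Lambda_d)\ \geq\ \min\{d,\,\delta(\Upsilon)\}\ \geq\ \min\{1,\,\delta(\Upsilon)\}.
\]

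Finally, since $E(\Lambda_d)$ is a Bessel system, the universal estimate (\ref{exp bessel}) gives
\[
B(\Theta_d)\ =\ B(\Lambda_d)\ \leq\ \frac{8\pi}{\min\{\delta(\Lambda_d),1\}}\ \leq\ \frac{8\pi}{\min\{\delta(\Upsilon),1\}},
\]
which is the desired bound. No step here is a real obstacle: the only substantive point to verify is the separation computation for $\Lambda_d$, and once that is done the result is immediate from the infinite-dimensional Bessel bound already recorded in (\ref{exp bessel}).
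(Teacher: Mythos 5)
Your proposal is correct and follows essentially the same route as the paper: reduce to the periodic set $\Lambda_d=\bigcup_j(d\theta_j+d\Z)$, transfer the Bessel bound via Theorem \ref{line to finite}, and apply (\ref{exp bessel}). The only difference is that you spell out the separation estimate $\delta(\Lambda_d)\geq\min\{d,\delta(\Upsilon)\}$, which the paper states without detail (and which in fact simplifies to $\delta(\Lambda_d)\geq\delta(\Upsilon)$ since $\delta(\Upsilon)\leq 1$ automatically).
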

\begin{proof}
For $d\in \N$ put $\Theta_d=\{\theta_j(d)\}_{j=1}^d$ and let
\[
\Lambda_d=\bigcup_{j=1}^d(d\theta_j(d)+dZ).
\]
Then $\Lambda_d$ is uniformly discrete with separation constant $\delta(\Lambda_d)\geq \delta(\Upsilon)$ and so (\ref{exp bessel}) implies that
\[
B(\Lambda_d)\leq \frac{8\pi}{\min\{\delta(\Upsilon), 1\}}.
\]
By Theorem \ref{line to finite} we have $B(\Lambda_d)=B(\Theta_d)$. The result follows.
\end{proof}

The canonical Example of a minimal MZ family is given by
\begin{equation}\label{Omega}
\Omega=\{\Phi_d\},\qquad \Phi_d=\big\{\frac{j}{d}\big\}_{j=0}^{d-1}.
\end{equation}
 In this case the Vandermonde matrix $V(\Phi_d)$ is the classical $d\times d$ Fourier matrix, and the sampling points in (\ref{sampling}) are the $d$ roots of unity. It is straightforward to verify that for every $d$ we have $A(\Phi_d)=B(\Phi_d)=1$ and so $A(\Omega)=B(\Omega)=1$. Keeping   Theorem \ref{line to finite} in mind, we see that $\Omega$ plays the same role in the finite dimensional setting as $E(\Z)$ plays in the setting of $L^2[0,1]$.

In particular, it seems reasonable to expect that stability results which hold for $E(\Z)$ in $L^2[0,1]$ will have a version which holds for $\Omega$ in the finite dimensional setting. Indeed, in \cite{MarzoSeip} Seip and Marzo obtain a version of Kadetz' $1/4$ theorem for MZ families, and in \cite{YT} Yu and Townsend estimate the corresponding lower MZ bound. In \cite{Marzo} Marzo obtains a version of Avdonin's theorem for this setting. The proofs in all cases were rather involved.  Here we present a simpler approach, which nevertheless allows to estimate the lower MZ bound in both theorems. We start with a version of Kadetz' theorem.

\begin{theorem}
Let $\Omega=\{\Phi_d\}_{d\in\N}$ be as in (\ref{Omega}) and let $\mu<1/4$. If a triangular family $\Upsilon=\{\Theta_d\}_{d\in\N}$ is a $\mu$-perturbation of $\Omega$ then it is a minimal MZ family and its lower and upper MZ bounds satisfy
\begin{equation}\label{fin-dim-kadetz}
A(\Upsilon)\geq 2\sin ^2{\frac{\pi}{4}(1-4 \mu)};\qquad B(\Upsilon)\leq 8.
\end{equation}
The $1/4$ restriction in this result is sharp.
\end{theorem}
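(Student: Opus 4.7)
The plan is to translate directly to the real line via Theorem \ref{line to finite} and then invoke Kadec's classical $1/4$ theorem recalled in the introduction.

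For each $d\in\N$, I would enumerate $\Theta_d=\{\theta_j(d)\}_{j=0}^{d-1}$ using the bijection $\Phi_d\to\Theta_d$ provided by the $\mu$-perturbation hypothesis, so that $|\theta_j(d)-j/d|\le \mu/d$ modulo $1$ for each $0\le j<d$. The point is that the unfolded set produced by Theorem \ref{line to finite},
\[
\L_d=\bigcup_{j=0}^{d-1}\bigl(d\theta_j(d)+d\Z\bigr)=\bigl\{\,j+\mu_j(d)+kd\,:\,0\le j<d,\ k\in\Z\,\bigr\},\qquad |\mu_j(d)|\le\mu,
\]
once reindexed by $n:=j+kd\in\Z$, becomes $\L_d=\{n+\widetilde\mu_n(d)\}_{n\in\Z}$ with $\sup_n|\widetilde\mu_n(d)|\le\mu<1/4$. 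Thus \emph{every} $\L_d$ is a standard Kadec $\mu$-perturbation of $\Z$ with the same bound, uniformly in $d$. The classical Kadec theorem therefore gives
\[
A(\L_d)\ge 2\sin^2\tfrac{\pi(1-4\mu)}{4}\qquad\textrm{and}\qquad B(\L_d)\le 8
\]
uniformly in $d$, where the upper bound uses Kadec's own upper estimate together with the uniform separation $\delta(\L_d)\ge 1-2\mu>1/2$. Theorem \ref{line to finite} transfers both estimates verbatim to $\Theta_d$, yielding (\ref{fin-dim-kadetz}).

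For the sharpness of the $\mu<1/4$ threshold, my plan is to port a counterexample from the line to the grid. I would fix a bad Kadec configuration $\L^*=\{n+\tfrac14\varepsilon_n\}_{n\in\Z}$, with signs $\varepsilon_n\in\{\pm 1\}$ chosen so that $E(\L^*)$ fails to be a Riesz basis in $L^2[0,1]$, and for each large $d$ set $\theta_j^*(d)=(j+\tfrac14\varepsilon_j)/d$ for $0\le j<d$; this produces a $\tfrac14$-perturbation $\{\Theta_d^*\}$ of $\Omega$ whose unfolding under Theorem \ref{line to finite} is a $d$-periodization of $\L^*$. The main obstacle, and the only nontrivial part of the proof, is to convert a hypothetical uniform MZ lower bound for $\{\Theta_d^*\}$ into a Riesz basis property for the non-periodic limit $E(\L^*)$. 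I would handle this by a normal-families/weak-$*$ compactness argument: Proposition \ref{MZ bessel} together with the assumed MZ upper bound provides uniform Bessel bounds for the biorthogonal duals of $E(\L_d^*)$, from which one can extract a weak-$*$ limit in $L^2[0,1]$ that biorthogonalizes $E(\L^*)$, contradicting the sharpness of Kadec.
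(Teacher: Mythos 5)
Your first part is exactly the paper's argument: unfold $\Theta_d$ to $\Lambda_d=\bigcup_j(d\theta_j(d)+d\Z)$, observe that the $\mu$-perturbation hypothesis gives an enumeration with $\sup_n|\lambda_n-n|\le\mu<1/4$, apply Kadec's bound to get $A(\Lambda_d)\ge 2\sin^2\frac{\pi}{4}(1-4\mu)$ uniformly in $d$, and transfer back via Theorem \ref{line to finite}. (The paper gets the upper bound from Proposition \ref{MZ bessel} rather than from Kadec's own estimate, but that is immaterial.) Your sharpness argument is also the same in spirit as the paper's -- periodize a critical configuration and pass to a weak limit -- except that the paper invokes Lemmas 3.13--3.14 of \cite{Seip} where you sketch the compactness/biorthogonality argument by hand; your sketch is sound, since uniform Bessel bounds for the weak-$*$ limits of the dual systems force the lower Riesz-sequence bound for $E(\L^*)$, which is exactly what fails for the known critical example.

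There is, however, one concrete slip in your counterexample construction. With $\theta_j^*(d)=(j+\tfrac14\varepsilon_j)/d$ for a \emph{fixed} sign sequence $(\varepsilon_j)_{j\ge 0}$, the unfolded set $\Lambda_d^*$ is the $d$-periodization of $\{j+\tfrac14\varepsilon_j\}_{j=0}^{d-1}$; a point with $j$ close to $d$ represents, modulo $d$, the negative integer $n=j-d$ with perturbation $\tfrac14\varepsilon_{n+d}$, which varies with $d$. So the weak limit of $\Lambda_d^*$ is \emph{not} your two-sided $\L^*$ unless $\varepsilon$ is eventually constant on both ends -- and the standard critical configuration $\Gamma=\{0\}\cup\{\pm(n-\tfrac14)\}$ requires \emph{opposite} signs on the two sides of the origin, which a single $d$-independent one-sided sequence cannot deliver after wraparound. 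The fix is to let the sign pattern depend on $d$ and be arranged symmetrically about $0$ modulo $1$, which is precisely what the paper does by taking $\Sigma_d=\{\pm(j-1/4)/d\}_{j}\cup\{0\}$ (with a small parity adjustment to keep $|\Sigma_d|=d$). With that modification your argument goes through.
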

\begin{proof}
Fix $d\in \N$ and put $\Theta_d=\{\theta_j(d)\}_{j=1}^d$. Let
\[
\Lambda_d=\bigcup_{j=1}^d(d\theta_j(d)+dZ).
\]
Since  $\Upsilon$ is a $\mu$-perturbation of $\Omega$, there exists a numeration $\Lambda_d=\{\lambda_n\}_{n=-\infty}^{\infty}$ so that
$
|\lambda_n-n|<\mu$ for all $n\in\Z$.
The estimate (\ref{kadec bound}) now implies that
 \[
A(\L_d)\geq 2\sin ^2{\frac{\pi}{4}(1-4 \mu)}.
\]
By Theorem \ref{line to finite} we have $A(\Lambda_d)=A(\Theta_d)$. Since this estimate holds for every $d\in\N$ a similar estimate holds also for $A(\Upsilon)$. The estimate for $B(\Upsilon)$ now follows from Claim \ref{MZ bessel}.

The sharpness of the $1/4$ restriction was proved in \cite{MarzoSeip}, we present here a different proof. Consider the triangular family $\Delta=\{\Sigma_d\}$ defined by
\[
\Sigma_{2k+1}=\{\pm\frac{j-1/4}{d}\}_{j=1}^{k}\cup\{0\}\qquad \Sigma_{2k}=\{\pm\frac{j-1/4}{d}\}_{j=1}^{k-1}\cup\{0,\frac{k-1/4}{d}\},
\]
where the values are understood to be (mod1). Assume for a contradiction that $\Delta$ is an MZ family with lower bound $A(\Delta)>0$. For $d\in \N$ put $\Sigma_d=\{\sigma_j(d)\}_{j=1}^d$ and let
\[
\Lambda_d=\bigcup_{j=1}^d(d\sigma_j(d)+dZ).
\]
By Theorem \ref{line to finite}, for every $d\in\N$ the system $E(\Lambda_d)$ is a Riesz basis in $L^2[0,1]$ with a lower Riesz basis bound satisfying $A(\Lambda_d)\geq A(\Delta)$. It follows that for $\Gamma=\{\pm(n-1/4)\}_{n-1}^{\infty}\cup\{0\}$ the system $E(\Gamma)$ is a Riesz basis in $L^2[0,1]$ as well. Indeed, this can be shown by noting that $\Gamma$ is a weak limit of $\Lambda_d$ and applying Lemma's 3.13 and 3.14 in \cite{Seip}. However, it is well known that  $E(\Gamma)$ is a not a Riesz basis in $L^2[0,1]$ (see e.g, \cite{OlUlbook} Exersize 1.24 and the hint thereafter). We arrive at a contradiction.
\end{proof}

Similar to the case of exponential systems, a Kadetz-type result holds also for general minimal MZ families. The precise formulation is given below.

\begin{theorem}\label{finite general kadetz}
Let  $\Upsilon=\{\Theta_d\}_{d\in\N}$ be a minimal MZ family and \[\mu<\sqrt{A(\Upsilon)}\delta(\Upsilon)/8\pi.\] If a triangular family $\Sigma$ is a $\mu$-perturbation of $\Upsilon$ then $\Sigma$ is also a minimal MZ family and its lower MZ bound satisfies
\begin{equation}\label{fin-gen-kad}
A(\Sigma)\geq \Big(\sqrt{A(\Upsilon)}-\frac{8\pi\mu}{\delta(\Upsilon)}\Big)^2.
\end{equation}
\end{theorem}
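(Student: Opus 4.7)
The plan is to reduce the finite-dimensional statement to its $L^2[0,1]$ counterpart via Theorem \ref{line to finite} and then to apply the general Kadetz-type extension recorded in the previous Remark, which asserts that if $E(\L)$ is a Riesz basis in $L^2[0,1]$ and $\Gamma=\{\gamma_\l\}_{\l\in\L}$ satisfies $|\gamma_\l-\l|<\mu$ with $\mu<\sqrt{A(\L)}\delta(\L)/(8\pi)$, then $E(\Gamma)$ is a Riesz basis with $A(\Gamma)\ge\bigl(\sqrt{A(\L)}-8\pi\mu/\delta(\L)\bigr)^{2}$.

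Fix $d\in\N$. Writing $\Theta_d=\{\theta_j(d)\}_{j=1}^d$ and $\Delta_d=\{\delta_j(d)\}_{j=1}^d$ (indexed so that $\delta_j(d)=\phi_d(\theta_j(d))$), I would form
\[
\Lambda_d=\bigcup_{j=1}^d\bigl(d\theta_j(d)+d\Z\bigr),\qquad \Gamma_d=\bigcup_{j=1}^d\bigl(d\delta_j(d)+d\Z\bigr).
\]
By Theorem \ref{line to finite}, $A(\Lambda_d)=A(\Theta_d)\ge A(\Upsilon)$ and similarly $A(\Gamma_d)=A(\Delta_d)$. The separation of $\Upsilon$ is preserved: if $\theta\ne\theta'\in\Theta_d$ then $\mathrm{dist}(\theta-\theta',\Z)\ge\delta(\Upsilon)/d$, so $d$-multiplication and taking the $d\Z$-translates yields points in $\Lambda_d$ differing by at least $\delta(\Upsilon)$, giving $\delta(\Lambda_d)\ge\delta(\Upsilon)$.

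The next step is to build a suitable numeration of $\Gamma_d$ indexed by $\Lambda_d$. Given $\lambda=d\theta_j(d)+dk\in\Lambda_d$, the $\mu$-perturbation condition provides some integer $k_0=k_0(\theta_j(d))$ with $|\theta_j(d)-\phi_d(\theta_j(d))-k_0|\le\mu/d$. Setting $\gamma_\lambda:=d\phi_d(\theta_j(d))+d(k+k_0)\in\Gamma_d$ we obtain $|\gamma_\lambda-\lambda|\le\mu$, and $\lambda\mapsto\gamma_\lambda$ is a bijection between $\Lambda_d$ and $\Gamma_d$ because the correspondence on a single period is $\phi_d$ (up to the integer shift $k_0$), which is a bijection $\Theta_d\to\Delta_d$. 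Since $A(\Lambda_d)\ge A(\Upsilon)$ and $\delta(\Lambda_d)\ge\delta(\Upsilon)$, the hypothesis $\mu<\sqrt{A(\Upsilon)}\delta(\Upsilon)/(8\pi)$ forces $\mu<\sqrt{A(\Lambda_d)}\delta(\Lambda_d)/(8\pi)$, so the extended Kadetz estimate applies to $\Lambda_d$ and yields
\[
A(\Gamma_d)\ge\Bigl(\sqrt{A(\Lambda_d)}-\tfrac{8\pi\mu}{\delta(\Lambda_d)}\Bigr)^{2}\ge\Bigl(\sqrt{A(\Upsilon)}-\tfrac{8\pi\mu}{\delta(\Upsilon)}\Bigr)^{2},
\]
where the second inequality uses the monotonicity of $t\mapsto\sqrt{t}$ and of $s\mapsto-1/s$.

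Translating back via Theorem \ref{line to finite} gives $A(\Delta_d)=A(\Gamma_d)$, and since the right-hand side is independent of $d$, taking the infimum over $d\in\N$ produces the MZ lower bound \eqref{fin-gen-kad}. The upper bound, needed to conclude that $\Sigma$ is an MZ family, follows from Proposition \ref{MZ bessel} once one verifies that $\Sigma$ is uniformly discrete; this is automatic because the chosen $\mu$ is, by the above, strictly less than $\delta(\Upsilon)/2$ (using $\sqrt{A(\Upsilon)}\le\sqrt{B(\Upsilon)}$ combined with the trivial bound on $B(\Upsilon)$), so the perturbation cannot collapse any two points. The main technical point I expect to require care is step three: picking the integer shift $k_0$ uniformly in the fiber and checking that the induced map $\Lambda_d\to\Gamma_d$ is genuinely a bijection preserving the $d\Z$-periodic structure, so that the one-dimensional Kadetz extension can be invoked without losing any of the separation or Riesz-basis-bound information.
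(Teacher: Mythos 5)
Your argument is correct and is exactly the reduction the paper intends (the paper omits the proof, but it follows the same pattern as its proof of the MZ version of Kadetz's $1/4$ theorem): periodize via Theorem \ref{line to finite}, note $A(\Lambda_d)\geq A(\Upsilon)$ and $\delta(\Lambda_d)\geq\delta(\Upsilon)$, apply the one-dimensional Kadetz-type extension from the remark in Section 4.2, and use monotonicity of the bound in $A$ and $\delta$. One small nit: the cleanest way to see $\mu<\delta(\Upsilon)/2$ (so that $\Sigma$ remains uniformly discrete and minimal) is the trivial bound $A(\Upsilon)\leq 1$, obtained by testing (\ref{sampling}) on the constant polynomial, rather than via an upper bound on $B(\Upsilon)$, which does not quite give $\mu<\delta(\Upsilon)/2$ for small $\delta(\Upsilon)$.
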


\begin{remark}\label{rectcase}
Claim \ref{MZ bessel} and Theorem \ref{finite general kadetz} hold also for more general triangular families which are not necessarilly minimal. See Remark \ref{works also for frames} in this regards.
\end{remark}
\begin{remark}\label{acknoledgments}
While working on this manuscript we became aware that O.~ Aspichuk, L.~De-Carli, and W.~Li independently obtained the estimates (\ref{fin-dim-kadetz}) and (\ref{fin-gen-kad}) using similar methods. See a corresponding acknowledgment at the end of the introduction in \cite{ACL}. A more comprehensive treatment of the rectangular case referred to in Remark \ref{rectcase}, as well as extensions of the estimates (\ref{fin-dim-kadetz}) and (\ref{fin-gen-kad}) to higher dimensions, can be found in \cite{ACL} as well.
\end{remark}

Finally, we formulate a version of Avdonin's theorem for MZ families with an estimate on the lower MZ bound. Here too we find the proof to be essentially the same as the proofs presented above and we omit it. We will need the following notations: Let  $\Upsilon=\{\Theta_d\}_{d\in\N}$  be a triangular family where $\Theta_d=\{\theta_j(d)\}_{j=1}^d$. For $d\in \N$ and $j=0,...,d-1$ we denote
\[
\mu^{(d)}_j=\theta_j(d)-\frac{j}{d},
\]
and for $N\in\N$ we let
\begin{equation}
\rho(N):=\sup\Big|\frac{d}{N}\sum_{j=mN}^{(m+1)N-1}\mu^{(d)}_j\Big|,
\end{equation}
where the supremum is taken over $d\in\N$ and $m=0,...,d-1$, and the variable $j$ in the sum is understood to be taken modulo $d$.
\begin{theorem}
Let  $\Upsilon=\{\Theta_d\}_{d\in\N}$  be a uniformly discrete triangular family with separation constant $\delta:=\delta(\Upsilon)$ and let $\mu^{(d)}_j$ and $\rho(N)$ be as above. Assume that there exists $L>0$ so that for every $d\in\N$ and $j=0,...,d-1$ we have $|\mu^{(d)}_j|<L/d$. If there exists $N\in\N$ so that
$\rho(N)<1/4$ then $\Upsilon$ is a minimal MZ family and its lower MZ bound satisfies
\[
A(\Upsilon)\geq \frac{1}{7\delta(\Upsilon)}e^{- \frac{960\pi L^2N}{\delta(\Upsilon)(1-4\rho)^2}}\sin^2{\frac{\pi}{4}(1-4\rho)},
\]
where $\rho:=\rho(N)$.
\end{theorem}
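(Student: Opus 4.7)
The plan is to mimic the pattern used in the preceding Kadetz-type result for MZ families: reduce to the real-line Avdonin theorem (Theorem \ref{thm:avdonin}) by the periodization trick of Theorem \ref{line to finite}, applied uniformly in $d$.

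Fix $d\in\N$ and write $\Theta_d=\{\theta_j(d)\}_{j=0}^{d-1}$. I would associate the periodic set
\[
\Lambda_d:=\bigcup_{j=0}^{d-1}\bigl(d\theta_j(d)+d\Z\bigr),
\]
so that Theorem \ref{line to finite} gives $A(\Lambda_d)=A(\Theta_d)$; it therefore suffices to prove the stated lower bound for $A(\Lambda_d)$, uniformly in $d$, and then take the infimum. The separation condition $|\theta_{j_1}(d)-\theta_{j_2}(d)|\geq \delta(\Upsilon)/d\pmod 1$ translates to $\delta(\Lambda_d)\geq \delta(\Upsilon)$, and enumerating $\Lambda_d=\{\lambda_n\}_{n\in\Z}$ by $\lambda_{j+dk}:=j+d\mu_j^{(d)}+dk$ for $j\in\{0,\dots,d-1\}$, $k\in\Z$, gives $\lambda_n-n=d\mu_{n\bmod d}^{(d)}$, hence $|\lambda_n-n|<L$ for all $n$.

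Next I would check the averaged-perturbation hypothesis. Since $n\mapsto\lambda_n-n$ is $d$-periodic, the supremum in $\mu^*(N)(\Lambda_d)$ is realized over $m\in\{0,\dots,d-1\}$, and rewriting each block sum gives
\[
\mu^*(N)(\Lambda_d)=\sup_{m=0,\dots,d-1}\Bigl|\frac{d}{N}\sum_{j=mN}^{(m+1)N-1}\mu_{j\bmod d}^{(d)}\Bigr|\leq \rho(N)=\rho<\tfrac14.
\]
Applying Theorem \ref{thm:avdonin} to $\Lambda_d$ then yields a lower bound of the form
\[
A(\Lambda_d)\geq \frac{1}{7\delta(\Lambda_d)}\exp\!\Bigl(-\frac{960\pi L^2N}{\delta(\Lambda_d)(1-4\mu^*(N)(\Lambda_d))^2}\Bigr)\sin^2\!\tfrac{\pi}{4}(1-4\mu^*(N)(\Lambda_d)).
\]
Since $\mu\mapsto e^{-C/(1-4\mu)^2}\sin^2\frac{\pi}{4}(1-4\mu)$ is decreasing on $[0,1/4)$ (both factors decrease as $\mu$ increases), the substitution $\mu^*(N)(\Lambda_d)\leq\rho$ is immediate and weakens the bound only in the desired direction.

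The one subtle point, which I regard as the main obstacle, is that the function $\delta\mapsto\frac{1}{7\delta}e^{-C/\delta}$ is not monotone, so the inequality $\delta(\Lambda_d)\geq\delta(\Upsilon)$ does not by itself justify replacing $\delta(\Lambda_d)$ by $\delta(\Upsilon)$ in the final estimate. I would handle this by revisiting the proof of Theorem \ref{thm:avdonin} rather than its statement: the separation $\delta(\Lambda)$ enters there only through Lemma \ref{lemma: ingham} and the choice $a=2/\delta(\Lambda)$ of Ingham interval length. Replacing $\delta(\Lambda)$ by any smaller $\delta'>0$ leaves Ingham's hypothesis $a\delta(\Lambda)>1$ comfortably satisfied (since $a\delta(\Lambda)=2\delta(\Lambda)/\delta'\geq 2$), the numerical constant $\frac{2(1-1/4)}{\pi^2}=\frac{3}{2\pi^2}$ still exceeds $\frac{1}{7}$, and the rest of the proof goes through verbatim with $\delta'$ everywhere in place of $\delta(\Lambda)$. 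Choosing $\delta'=\delta(\Upsilon)\leq\delta(\Lambda_d)$ thus produces the bound of the theorem for each $d$, and passing to the infimum over $d$ gives the claimed estimate for $A(\Upsilon)$.
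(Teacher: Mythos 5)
Your proof is correct and follows exactly the route the paper intends (the paper omits the proof, noting only that it is ``essentially the same'' as the periodization arguments used for the Kadetz-type MZ results): reduce to Theorem \ref{thm:avdonin} via the periodic sets $\Lambda_d$ of Theorem \ref{line to finite}, verify $\delta(\Lambda_d)\geq\delta(\Upsilon)$, $|\lambda_n-n|<L$ and $\mu^*(N)(\Lambda_d)\leq\rho(N)$, and take the infimum over $d$. Your extra care with the non-monotonicity of $\delta\mapsto\frac{1}{7\delta}e^{-C/\delta}$ --- re-running the Ingham step with $a=2/\delta(\Upsilon)$ in place of $a=2/\delta(\Lambda_d)$ --- addresses a genuine subtlety the paper glosses over, and the fix is valid.
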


\begin{remark}
Some additional results regarding uniform sampling and interpolation in the polynomial spaces $\mathcal{P}_{d}$ may be obtained via a similar technique. These include, for example, density results in the spirit of Beurling and Landau's theorems and their extensions. (See  \cite{OrtegaCerda} for some results of this nature, obtained using a different machinery).
\end{remark}

\bibliographystyle{amsplain}
%\bibliography{../../../@bibliotek/bibliotek}

\def\cprime{$'$} \def\cprime{$'$} \def\cprime{$'$} \def\cprime{$'$}
\providecommand{\bysame}{\leavevmode\hbox to3em{\hrulefill}\thinspace}
\providecommand{\MR}{\relax\ifhmode\unskip\space\fi MR }
% \MRhref is called by the amsart/book/proc definition of \MR.
\providecommand{\MRhref}[2]{%
  \href{http://www.ams.org/mathscinet-getitem?mr=#1}{#2}
}
\providecommand{\href}[2]{#2}

\end{document}